  \DeclareSymbolFont{AMSb}{U}{msb}{m}{n}
  \DeclareSymbolFontAlphabet{\mathbb}{AMSb}
\newcommand{\fig}[1]{Fig.~\ref{fig:#1}}
\newcommand{\tab}[1]{Tab.~\ref{tab:#1}}
\newcommand{\eq}[1]{Eq.~\eqref{eq:#1}}
\newcommand{\alg}[1]{Alg.~\ref{alg:#1}}
\newcommand{\prop}[1]{Prop.~\ref{prop:#1}}
\newcommand{\lem}[1]{Lem.~\ref{lem:#1}}
\newcommand{\sect}[1]{Sect.~\ref{sect:#1}}
\newcommand{\subsect}[1]{Subsect.~\ref{subsect:#1}}
\newcommand{\app}[1]{Apdx.~\ref{app:#1}}
\newtheorem{lemma}{Lemma}
\newtheorem{proposition}{Proposition}
\newtheorem{definition}{Definition}
\newcommand{\bm}[1]{\boldsymbol{#1}}
\let\epsilon\varepsilon
\let\phi\varphi
\newcommand{\NN}{\mathbb{N}}
\newcommand{\RR}{\mathbb{R}}
\newcommand{\oo}[2]{\left]#1, #2\right[}
\newcommand{\oc}[2]{\left]#1, #2\right]}
\newcommand{\cc}[2]{\left[#1, #2\right]}
\newcommand{\co}[2]{\left[#1, #2\right[}
\newcommand{\card}[1]{\left\vert #1 \right\vert}
\newcommand{\D}{\,\text{d}}
\newcommand{\abs}[1]{\left| #1 \right|}
\newcommand{\aabs}[1]{\left\| #1 \right\|}
\newcommand{\scal}[2]{\left\langle #1, #2 \right\rangle}
\newcommand{\floor}[1]{\left\lfloor #1 \right\rfloor}
\DeclareMathOperator{\ReLU}{ReLU}
\DeclareMathOperator{\abse}{ReLU_{\begingroup\mathgroup-1 \varepsilon\endgroup}}
\DeclareMathOperator{\sinc}{sinc}
\DeclareMathOperator{\erf}{erf}
\DeclareMathOperator{\lip}{Lip}
\newcommand{\rhoe}{\rho_\epsilon}
\acrodef{pde}[PDE]{partial differential equation}
\acrodef{mc}[MC]{Monte Carlo}
\acrodef{aq}[AQ]{Adaptive Quadrature}
\acrodef{fem}[FEM]{Finite Element Method}
\acrodef{nn}[NN]{Neural Network}
\acrodef{pinn}[PINN]{Physics-Informed Neural Network}
\acrodef{cpwl}[CPWL]{Continuous Piecewise Linear}
\title[Adaptive quadratures for neural networks]{Adaptive quadratures for nonlinear approximation of low-dimensional PDEs using smooth neural networks}
\date{\today}
\keywords{adaptive quadrature, integration, PDE, neural networks}
\author[A. Magueresse]{Alexandre Magueresse$^{\dagger\ast}$}
\address{$^\dagger$School of mathematics\\Monash university\\Clayton\\Victoria 3800\\Australia}
\email{alexandre.magueresse@monash.edu}
\author[S. Badia]{Santiago Badia$^{\dagger \sharp}$}
\address{$^\sharp$ Centre Internacional de M\`etodes Num\`erics a l'Enginyeria, Campus Nord, 08034 Barcelona, Spain.}
\email{santiago.badia@monash.edu}
\thanks{$^\ast$Corresponding author}
\begin{document}

\begin{abstract}
  Physics-informed neural networks (PINNs) and their variants have recently emerged as alternatives to traditional partial differential equation (PDE) solvers, but little literature has focused on devising accurate numerical integration methods for neural networks (NNs), which is essential for getting accurate solutions. In this work, we propose adaptive quadratures for the accurate integration of neural networks and apply them to loss functions appearing in low-dimensional PDE discretisations. We show that at opposite ends of the spectrum, continuous piecewise linear (CPWL) activation functions enable one to bound the integration error, while smooth activations ease the convergence of the optimisation problem. We strike a balance by considering a CPWL approximation of a smooth activation function. The CPWL activation is used to obtain an adaptive decomposition of the domain into regions where the network is almost linear, and we derive an adaptive global quadrature from this mesh. The loss function is then obtained by evaluating the smooth network (together with other quantities, e.g., the forcing term) at the quadrature points. We propose a method to approximate a class of smooth activations by CPWL functions and show that it has a quadratic convergence rate. We then derive an upper bound for the overall integration error of our proposed adaptive quadrature. The benefits of our quadrature are evaluated on a strong and weak formulation of the Poisson equation in dimensions one and two. Our numerical experiments suggest that compared to Monte-Carlo integration, our adaptive quadrature makes the convergence of NNs quicker and more robust to parameter initialisation while needing significantly fewer integration points and keeping similar training times.
\end{abstract}

\maketitle

\section{Introduction}
\label{sect:introduction}
In the last years, the use of \acp{nn} to approximate \acp{pde} has been widely explored as an alternative to standard numerical methods such as the \ac{fem}. The idea behind these methods is to represent the numerical solution of a \ac{pde} as the realisation of a \ac{nn}. The parameters of the \ac{nn} are then optimised to minimise a form of the residual of the \ac{pde} or an energy functional. The original \ac{pinn} method \cite{raissi2019} considers the minimisation of the \ac{pde} in the strong form. Other frameworks recast the \ac{pde} as a variational problem and minimise the weak energy functional, like in Variational \acp{pinn} (VPINNs, \cite{kharazmi2019}), the Deep Ritz Method (DRM, \cite{yu2018}) and the Deep Nitsche Method (DNM, \cite{liao2019}).

\subsection*{Comparison between the FEM and NNs}

\Acp{nn} activated by the $\ReLU$ function (Rectified Linear Unit) constitute a valuable special case, as they can emulate linear finite elements spaces on a mesh that depends on the parameters of the network \cite{he2018}. This enables one to draw close links between the well-established theory of the \ac{fem} and the recently introduced neural frameworks \cite{opschoor2020}.

There are additional benefits to using the ReLU activation function, such as addressing the issue of exploding or vanishing gradients and being computationally cheap to evaluate. However, the numerical experiments in \cite{adcock2021} suggest that the theoretical approximation bounds for $\ReLU$ networks are not met in practice: even though there exist network configurations that realise approximations of a given accuracy, the loss landscape is rough because of the low regularity of $\ReLU$. As a result, gradient-based optimisation algorithms are largely dependent on the initial parameters of the network and may fail to converge to a satisfying local minimum. It was also shown in \cite{hayou2019} that training a network with a smooth activation function helps the network converge.

Compared to the \ac{fem}, \acp{nn} can be differentiated thanks to automatic differentiation rather than relying on a numerical scheme (and ultimately a discretisation of the domain). Another advantage of \acp{nn} is that they are intrinsically adaptive, in the sense that contrary to traditional \ac{pde} solvers that rely on a fixed mesh and fixed shape functions, \acp{nn} can perform nonlinear approximation. Finally, the machine learning approach proposes a unified setting to handle forward and inverse problems that can involve highly nonlinear differential operators. However, properly enforcing initial and boundary conditions remains one of the major challenges for \acp{nn} \cite{berrone2022}, especially when the domain has a complex geometry.

\subsection*{Numerical integration of NNs}

Current efforts in the literature are mainly focused on developing novel model architectures or solving increasingly complex \acp{pde}, while there are comparatively few results on the convergence, consistency, and stability of \acp{pinn} and related methods. These are the three main ingredients that provide theoretical guarantees on the approximation error, and they remain to be established for \acp{nn}.

In particular, the numerical quadratures that are used to evaluate the loss function have mostly been investigated experimentally, and the link between the integration error of the loss function, often referred to as the generalisation gap, and the convergence of the network is still not fully understood. The overwhelming majority of implementations of \acp{nn} solvers rely on \ac{mc} integration because it is relatively easy to implement. However, it is known that its convergence rate with respect to the number of integration points is far from optimal in low-dimensional settings. For example, the integration error decays as $n^{-2}$ for the trapezoidal rule in dimension one, whereas it is of the order of $n^{-1/2}$ for \ac{mc} regardless of the dimension. Furthermore, local behaviours of the function to integrate are generally poorly handled by \ac{mc}, because the points are sampled uniformly across the domain.

A few recent works have considered more advanced integration methods for \ac{nn} solvers, including Gaussian quadratures or quasi-\ac{mc} integration. Some studies contemplate resampling the \ac{mc} points during the training or adding more points where the strong, pointwise residual is larger. We refer the reader to \cite{wu2023} for a comprehensive review of sampling methods and pointwise residual-based resampling strategies applied to \acp{nn}. They show that the convergence of the model is significantly influenced by the choice of the numerical quadrature. A theoretical decay rate of the generalisation gap when the training points are carefully sampled was shown in \cite{longo2021}, while \cite{shin2020} proved the consistency of \acp{pinn} and related methods in the sample limit. Finally, the generalisation error was bounded with the sum of the training error and the numerical integration error in \cite{mishra2022}. However, this result requires strong assumptions on the solution and the numerical quadratures, and the constants involved in the bound are not explicit and may not be under control.

It was shown in \cite{rivera2022} that \enquote{quadrature errors can destroy the quality of the approximated solution when solving \acp{pde} using deep learning methods}. The authors explored the use of a \ac{cpwl} interpolation of the output of the network and observed better convergence compared to \ac{mc} integration. Their experiments were performed in dimension one, and the networks were interpolated on fixed, uniform meshes.

\subsection*{Motivations and contributions}

Our work aligns with current research in the theory and practice of \acp{nn} applied to approximating \acp{pde}. We seek to equip \acp{nn} with some of the tools of the \ac{fem} in terms of numerical integration, for low-dimensional ($d \leq 3$) problems arising from computational physics or engineering.

The space of \ac{cpwl} activation functions is of great interest for \acp{nn} owing to its structure of vector space and its stability by composition. This means that whenever a network is activated by a \ac{cpwl} function, its realisation is \ac{cpwl} too. Our idea is that provided that we can decompose the domain at hand into regions where the network is almost linear in each cell, we can integrate with high accuracy any integral that involves the network and its partial derivatives by relying on Gauss-like quadratures of a sufficiently high order. The linear and bilinear forms are decomposed on this mesh in a similar way as in the \ac{fem}, except that the underlying mesh is adaptive instead of fixed. We provide an algorithm to obtain such a decomposition of the domain based on the expression of the activation function and the parameters of the network.

Nevertheless, as will be shown in \sect{regularisation}, \ac{cpwl} activations are not smooth enough for the optimisation problem of a \ac{nn} approximating a \ac{pde} to be well-behaved. If the activation function is \ac{cpwl}, it needs to be regularised so that the network can be properly trained. In this work, all the networks that we train are activated by smooth functions. However, the quadrature points and the mesh are obtained from the projection of this smooth network onto a \ac{cpwl} space, when the activation function is replaced by a \ac{cpwl} approximation of the smooth activation. We design a procedure to obtain a \ac{cpwl} approximation of a smooth function on $\RR$ as a whole.

Our extensive numerical experiments demonstrate that our proposed integration method enables the \ac{nn} to converge faster and smoother, to lower generalisation errors compared to \ac{mc} integration, and to be more robust to the initialisation of its parameters while keeping similar training times.

\subsection*{Outline}

We recall the general setting of \ac{pinn} and its variants and introduce some notations related to the variational problem we consider in \sect{preliminaries}. In \sect{regularisation}, after explaining why \acp{nn} activated by \ac{cpwl} functions are not suitable to approximate \acp{pde}, we suggest several smooth approximations of $\ReLU$. In \sect{cpwlisation}, we introduce a class of activation functions that behave linearly at infinity, propose a method to approximate them by \ac{cpwl} functions on $\RR$ as a whole and prove its quadratic convergence rate. Then, in \sect{adaptivity}, we provide an algorithm to decompose the physical domain into convex regions where the \ac{nn} is almost linear. We also describe how to create Gaussian quadratures for convex polygons and analyse the integration error of our method. We present our numerical experiments in \sect{experiments}. We solve the Poisson equation in dimensions one and two on the reference segment and square, but also on a more complex domain, to show that our adaptive quadrature can adapt to arbitrary polygonal domains. We discuss our findings and conclude our work in \sect{conclusion}. In order to keep the article more readable, we decided to write the proofs of our lemmas and propositions in \app{proofs} and our algorithms in \app{algorithms}.

\section{Preliminaries}
\label{sect:preliminaries}
Consider the boundary condition problem
\begin{equation}
    \left\{\begin{array}{cl}
        \mathcal{D} u = f & \text{ in } \Omega  \\
        \mathcal{B} u = g & \text{ on } \Gamma.
    \end{array}\right.,\label{eq:pde}
\end{equation}
where $\Omega$ is a domain in $\RR^d$ with boundary $\Gamma$, $\mathcal{D}$ is a domain differential operator, $f: \Omega \to \RR$ is a forcing term, $\mathcal{B}$ is a trace/flux boundary operator and $g: \Gamma \to \RR$ the corresponding prescribed value. We approximate the solution $u$ as the realisation of a \ac{nn} of a given architecture. We introduce some notations and recall the key principles of the \ac{pinn} framework in the rest of this section.

\subsection{Neural networks}

Our solution ansatz is a fully-connected, feed-forward \ac{nn}, which is obtained as the composition of linear maps and nonlinear activation functions applied element-wise. In this work, we separate the structure of a \ac{nn} from its realisation when the activation function and the values of the weights and biases are given. We describe the architecture of a network by a tuple $(n_0, \ldots, n_L)\in \NN^{(L+1)}$, where $L$ is the number of linear maps it is composed of, and for $1 \leq k \leq L$ the number of neurons on layer $k$ is $n_k$. We take $n_0 = d$ and since we only consider scalar-valued \acp{pde}, we have $n_L = 1$.

For each layer number $1 \leq k \leq L$, we write $\bm{\Theta}_k: \RR^{n_{k-1}} \to \RR^{n_k}$ the linear map at layer $k$, defined by $\bm{\Theta}_k \bm{x} = \bm{W}_k \bm{x} + \bm{b}_k$ for some weight matrix $\bm{W}_k \in \RR^{n_k \times n_{k-1}}$ and bias vector $\bm{b}_k \in \RR^{n_k}$. We apply the activation function $\rho: \RR \to \RR$ after every linear map except for the last one so as not to constrain the range of the model. The expression of the network is then
\[\mathcal{N}(\bm{\vartheta}, \rho) = \bm{\Theta}_L \circ \rho \circ \bm{\Theta}_{L-1} \circ \ldots \circ \rho \circ \bm{\Theta}_1,\]
where $\bm{\vartheta}$ stands for the collection of trainable parameters of the network $\bm{W}_k$ and $\bm{b}_k$. Although the activation functions could be different at each layer or even trainable, we apply the same, fixed activation function everywhere.

\subsection{Abstract setting for the loss function}

The loss function serves as a distance metric between the solution and the model. We write the loss in terms of the network itself to simplify notations, although it is to be understood as a function of its trainable parameters. Throughout this paper, we write $\scal{u}{v}_{\Omega} = \int_\Omega u v \D \Omega$ and $\scal{u}{v}_{\Gamma} = \int_{\Gamma} u v \D \Gamma$ the classical inner products on $L^2(\Omega)$ and $L^2(\Gamma)$. We write $\aabs{\cdot}_\Omega$ and $\aabs{\cdot}_\Gamma$ the corresponding $L^2$ norms.

\subsubsection{Strong formulation}

According to the original \ac{pinn} formulation \cite{raissi2019}, the network is trained to minimise the strong residual of \eq{pde}. The minimisation is performed in $L^2$, so we suppose that $\mathcal{D} u$ and $f$ are in $L^2(\Omega)$, and $\mathcal{B} u$ and $g$ are in $L^2(\Gamma)$, This ensures that the residual
\[\mathcal{R}(u) = \frac{1}{2} \aabs{\mathcal{D} u - f}_\Omega^2 + \frac{\beta}{2} \aabs{\mathcal{B} u - g}_\Gamma^2\]
is well-posed, where $\beta > 0$ is a weighting term for the boundary conditions.

\subsubsection{Weak formulation}

The \ac{fem} derives a weak form of \eq{pde} that we can write in the following terms:
\[\left\{\begin{array}{l}
        \text{Find } u \in U \text{ such that} \\
        \forall v \in V, \quad a(u, v) = \ell(v).
    \end{array}\right.\]
In this formulation, $a: U \times V \to \RR$ is a bilinear form, $\ell: V \to \RR$ is a linear form, and $U$ and $V$ are two functional spaces on $\Omega$ such that $a(u, v)$ and $\ell(v)$ are defined for all $u \in U$ and $v \in V$. When the bilinear form $a$ is symmetric, coercive, and continuous on $U \times V$, and $U = V$, this weak setting can be recast as the minimisation of the energy functional
\[\mathcal{J}(u) = \frac{1}{2} a(u, u) - \ell(u)\]
for $u \in U$. In our case, $U$ is the manifold of \acp{nn} with a given architecture defined on $\Omega$. One difference with the \ac{fem} setting is that it is difficult to strongly enforce essential boundary conditions on \ac{nn} spaces. Among other approaches, it is common to multiply the network by a function that vanishes on the boundary and add a lifting of the boundary condition \cite{sukumar2022}. Instead, we follow the penalty method and modify the residual: the boundary terms that come from integration by parts do not cancel and we suppose that it is possible to alter the bilinear form $a$ into $a_\beta$ and linear form $\ell$ into $\ell_\beta$ such that $a_\beta$ is still symmetric and continuous. To ensure the coercivity of the bilinear form and weakly enforce Dirichlet boundary conditions, we add the term $\beta \scal{u}{v}_\Omega$ in the bilinear form and $\beta \scal{g}{v}_\Gamma$ in the linear form. Provided that the penalty coefficient $\beta$ is large enough, $a_\beta$ can be made coercive \cite[196]{ern2021} and we consider the energy minimisation problem
\begin{equation}
    \label{eq:problem}
    \text{Find } u \in U \text{ such that for all } v \in U, \mathcal{J}_\beta(u) = \frac{1}{2} a_\beta(u, u) - \ell_\beta(u) \leq \mathcal{J}_\beta(v).
\end{equation}

We see that the strong problem is a special case of the weak formulation. Indeed, by expanding the inner products and rearranging the terms, we obtain
\[\mathcal{R}(u) = \frac{1}{2} \left(\scal{\mathcal{D} u}{\mathcal{D} u}_\Omega + \beta \scal{\mathcal{B} u}{\mathcal{B} u}_\Gamma\right) - \left(\scal{\mathcal{D} u}{f}_\Omega + \beta \scal{\mathcal{B} u}{g}_\Gamma\right) + C,\]
where $C$ is a constant that only depends on the data $f$ and $g$. In particular, the minimum of $\mathcal{R}$ does not depend on $C$ so we can disregard this constant in the context of optimisation. If $\mathcal{B}$ is a Dirichlet operator, this expression corresponds to the penalty method. After considering the functional derivative of $\mathcal{R}$ and enforcing it to be zero, we recognise the notations of the weak form of problem \eq{pde}, where the bilinear and linear form are defined as $a_\beta(u, v) = \scal{\mathcal{D} u}{\mathcal{D} v}_\Omega + \beta \scal{\mathcal{B} u}{\mathcal{B} v}_\Gamma$ and $\ell_\beta(v) = \scal{\mathcal{D} v}{f}_\Omega + \beta \scal{\mathcal{B} v}{g}_\Gamma$. In the remainder of this work, we keep the notations of the weak form as in \eq{problem} and simply write $\mathcal{J}$ the energy to be minimised.

\subsection{Evaluation of the loss function and optimisation}

The bilinear and linear forms are sums of integrals over the domain $\Omega$ and its boundary $\Gamma$. In the general case, these integrals have to be approximated because no closed-form expressions can be obtained, or they are numerically intractable. This is why most \ac{pinn} implementations and their variants approximate the loss function with \ac{mc} integration. The integration points can be resampled at a given frequency during training to prevent overfitting or to place more points where the pointwise residual is larger \cite{wu2023}. In this work, we are precisely designing a new integration method to replace \ac{mc}.

The trainable parameters of the network are randomly initialised and a gradient descent algorithm can be used to minimise $\mathcal{J}$ with respect to $\bm{\vartheta}$. The partial derivatives of $u$ with respect to the space variable $\bm{x}$, and the gradient of the loss function with respect to the parameters $\bm{\vartheta}$ are made available through automatic differentiation.

\section{Regularisation of neural networks}
\label{sect:regularisation}
It was shown in \cite{he2018} that \acp{nn} activated by the $\ReLU$ activation function can emulate first-order Lagrange elements, thus advocating the use of $\ReLU$ networks to approximate \acp{pde}. In this section, we show that the convergence of \ac{cpwl} networks is slow and noisy whenever the energy functional $\mathcal{J}$ involves a spatial derivative of $u$. We then provide a reason for preferring smoother activations and suggest a few alternatives for the $\ReLU$ function.

\subsection{Limitations of CPWL activation functions}

When the activation is \ac{cpwl}, the \ac{nn} $u$ is continuous and weakly differentiable (in the sense of distributions), and its partial derivatives are piecewise constant and discontinuous. Even though derivatives of order two and higher will be evaluated as zero, only the first-order derivatives of the network make pointwise sense from a theoretical point of view, while derivatives of higher order do not.

Let us consider an illustrative example to clarify the kind of problems that arise when the energy functional involves a spatial derivative of $u$. We consider the Poisson problem $u'' = f''$ in $\Omega$ with $u = f$ on $\Gamma$, which we reframe into the following variational problem using the Nitsche method
\begin{align*}
    a(u, v) & = \scal{u'}{v'}_\Omega - \scal{u' \cdot n}{v}_\Gamma - \scal{u}{v' \cdot n}_\Gamma + \beta \scal{u}{v}_\Gamma \\
    \ell(v) & = \scal{-f''}{v}_\Omega - \scal{f}{v' \cdot n}_\Gamma + \beta \scal{f}{v}_\Gamma.
\end{align*}
Here $n$ stands for the outward-pointing unit normal to $\Gamma$. We consider the manufactured solution $f = \ReLU$ on $\Omega = \cc{-1}{+1}$. In this case, $f'' = \delta_0$ is the Dirac delta centered at zero, which makes the first integral in $\ell(v)$ equal to $-v(0)$. We consider a one-layer network $u(x) = f(x + c)$ to approximate the solution to this problem, and we wish to recover $c = 0$. The energy functional has the following expression:
\[\mathcal{J}(c) = \frac{1}{2} \left\{\begin{array}{ll}
        0                                 & \text{if } c < -1            \\
        \beta c^2 + \abs{c} - (\beta - 1) & \text{if } -1 \leq c \leq +1 \\
        2 \beta c^2 - 2 (\beta - 1) c     & \text{if } c > +1
    \end{array}\right.\]
We verify that $\mathcal{J}$ shows a discontinuity at $-1$. Besides, $\mathcal{J}$ is decreasing on $\cc{-1}{0}$ and increasing on $\cc{0}{+1}$. We compute $\mathcal{J}(0) = -\frac{1}{2} (\beta - 1)$, which is negative whenever $\beta > 1$. This proves that $\mathcal{J}$ has a global minimum at $c = 0$. However, the gradient of $\mathcal{J}$ is discontinuous at $0$, being equal to $-1/2$ on the left and $+1/2$ on the right. Because $\ReLU$ is not differentiable at zero, any gradient-based optimiser will suffer from oscillations, implying slow and noisy convergence.

This example only involved one unknown, but one would already have to choose a low learning rate and apply a decay on the learning rate to make the network converge to an acceptable solution. One can imagine that when the optimisation problem involves a whole network, these oscillations can interfere with and severely hinder convergence. Ensuring that the activation function is at least $\mathcal{C}^1$ enables one to interchange the derivative and integral signs, and if it is $\mathcal{C}^2$ then the gradient of the loss function is continuous.

\subsection{Regularisation of the activation function}

The regularity of a \ac{nn} is entirely dictated by that of its activation function. In order to make the energy functional smoother while keeping the motivation of \ac{cpwl}, our idea is to approximate the \ac{cpwl} function by a smoother equivalent and replace every occurrence of the \ac{cpwl} activation by its smoother counterpart in the network.

Our approach is similar to the \enquote{canonical smoothings} of $\ReLU$ networks introduced in \cite{dong2022}. We take $\ReLU$ as an example and point out several families of smooth equivalents. In each case, we introduce the normalised variable $\bar{x} = x / \gamma$, where $\gamma$ is a constant that only depends on the choice of the family.
\begin{itemize}
    \item If we rewrite $\ReLU(x) = \frac{1}{2}(x + \abs{x})$, we see that the lack of regularity comes from the absolute value being non-differentiable at zero. We can thus find a smooth equivalent of the absolute value and replace it in this expression. We set $\gamma = 2 \epsilon$ and define $\rhoe$ as
          \[\rhoe(x) = \frac{1}{2} \left(x + \gamma \sqrt{1 + \bar{x}^2}\right).\]
    \item The first derivative of $\ReLU$ is discontinuous at zero. We can replace it with any continuous sigmoid function and obtain a smoothing of $\ReLU$ by integrating it. We set $\gamma = \frac{2}{\ln 2} \epsilon$ and define $\rhoe$ as
          \[\rhoe(x) = \frac{1}{2} \left(x + \gamma \ln(2 \cosh(\bar{x}))\right).\]
    \item The second derivative of $\ReLU$ is the Dirac delta at zero. We can draw from the well-established theory of mollifiers to obtain a smooth Dirac and integrate it twice to build a regularised version of $\ReLU$. We set $\gamma = 2 \sqrt{\pi} \epsilon$ and define $\rhoe$ as
          \[\rhoe(x) = \frac{1}{2} \left(x + x \erf(\bar{x}) + \frac{\gamma}{\sqrt{\pi}} \exp(-\bar{x}^2) \right).\]
\end{itemize}

\begin{figure}
    \centering
    \subfloat[$\rhoe$\label{fig:reg_zero}]{
        \includegraphics[width=0.31\linewidth]{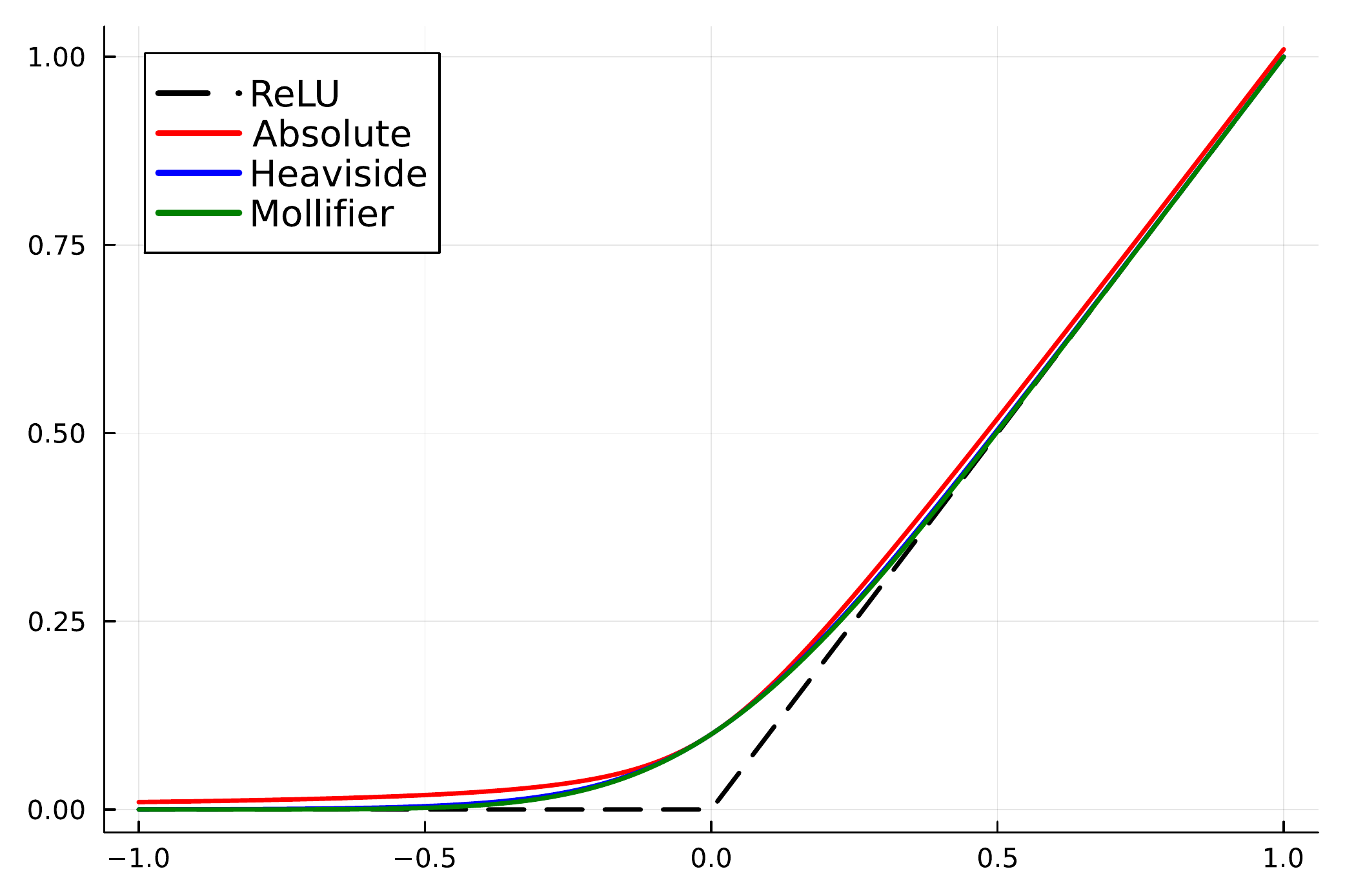}
    }
    \hfill
    \subfloat[$\rhoe'$\label{fig:reg_one}]{
        \includegraphics[width=0.31\linewidth]{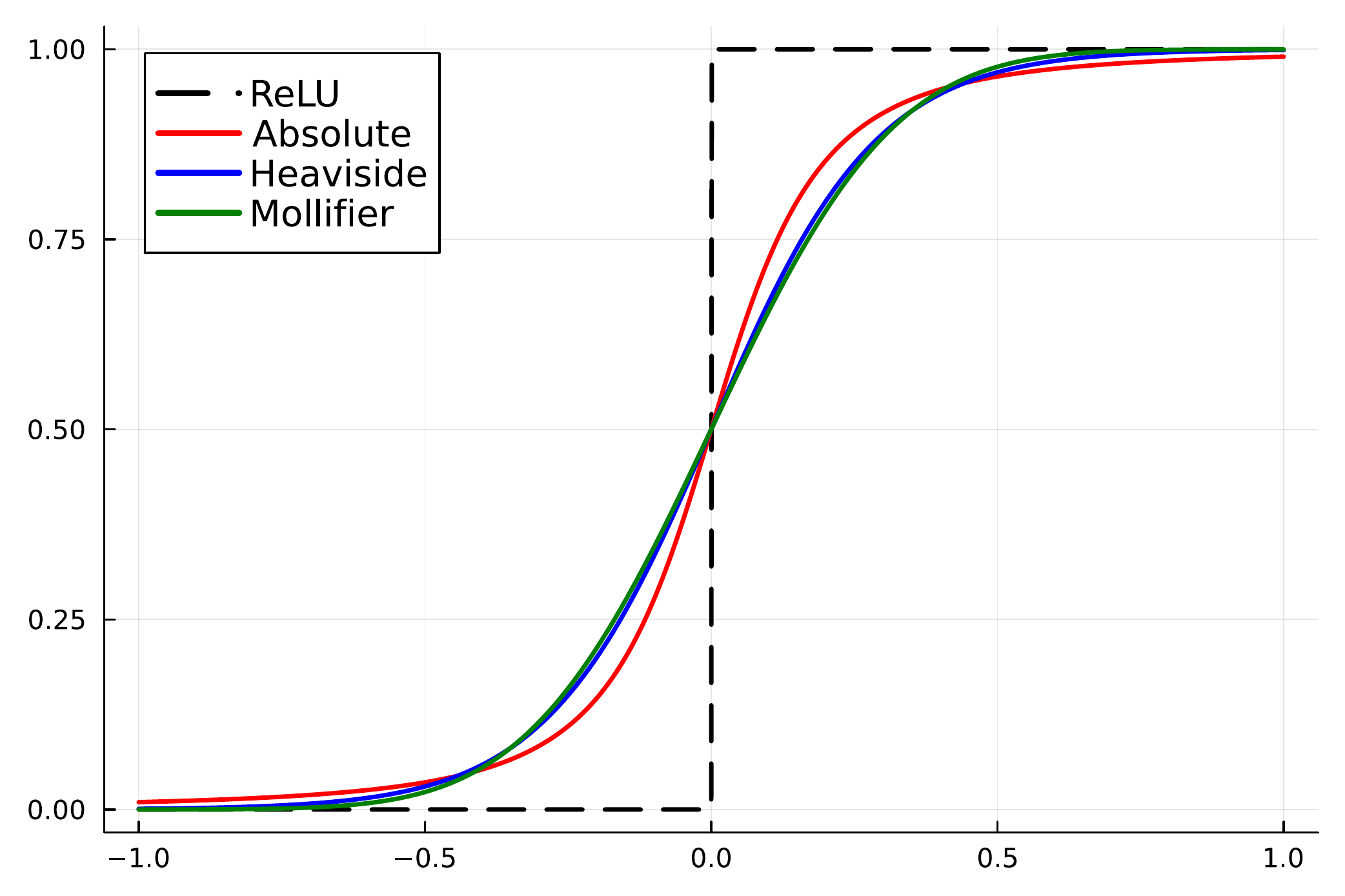}
    }
    \hfill
    \subfloat[$\rhoe''$\label{fig:reg_two}]{
        \includegraphics[width=0.31\linewidth]{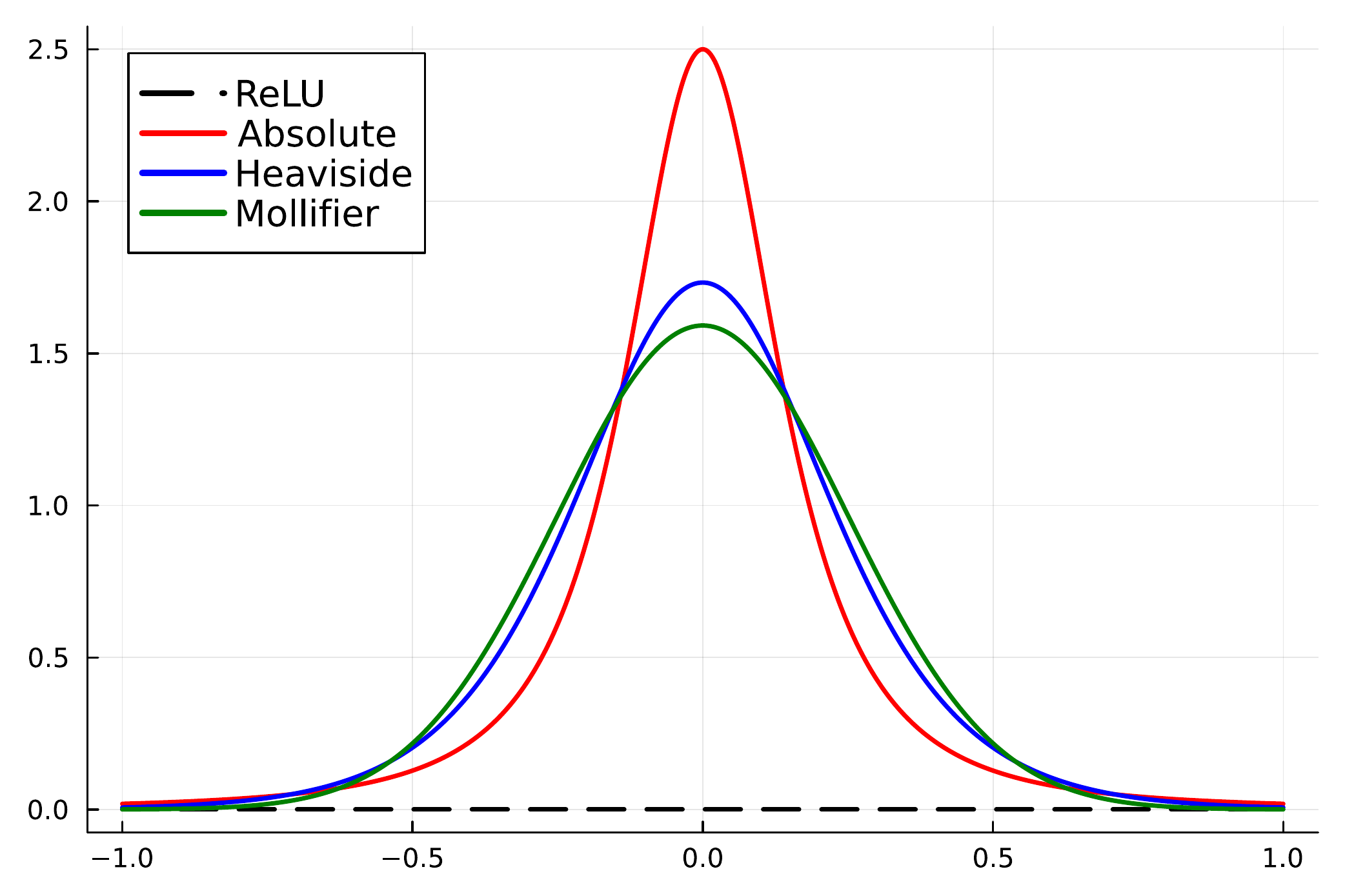}
    }
    \caption{Examples of regularising families for the $\ReLU$ function and corresponding first and second derivatives.}
    \label{fig:regularisation}
\end{figure}

The three classes of smooth equivalents of $\ReLU$ introduced above, together with their first and second derivatives, are illustrated on \fig{regularisation}. It can be easily shown that for all $\epsilon > 0$, the three smooth equivalents above have the following properties:
\begin{enumerate*}[label=(\itshape\roman*)]
    \item the function $\rhoe$ belongs to $\mathcal{C}^\infty(\RR)$;
    \item similarly to $\ReLU$, the function $\rhoe$ is convex, monotonically increasing on $\RR$, and it is symmetric around $y=-x$, i.e. for all $x \in \RR$, it holds $\rhoe(x) - \rhoe(-x) = x$;
    \item the graph of $\rhoe$ lies above that of $\ReLU$; and
    \item the parameter $\epsilon$ controls the pointwise distance between $\rhoe$ and $\ReLU$, in the sense that $\aabs{\ReLU - \rhoe}_{L^\infty(\RR)} = \rhoe(0) = \epsilon$.
\end{enumerate*}

In particular, Lemma 2.5 in \cite{dong2022} applies to all the candidates above, and we refer to Proposition 2.6 in the same article for a bound of $\aabs{\mathcal{N}(\bm{\vartheta}, \ReLU) - \mathcal{N}(\bm{\vartheta}, \rho_\epsilon)}_{L^\infty(\Omega)}$ in terms of $\aabs{\ReLU - \rho_\epsilon}_{L^\infty(\RR)}$.

The computational cost of these functions must also be taken into account. For this reason, we choose to use the family obtained by the first method. We write it $\abse$ in the rest of this work.

\section{Global approximation of an activation function by a CPWL function}
\label{sect:cpwlisation}
In this section, we suppose that we are given a smooth activation $\rho$ and we consider the \ac{nn} $u = \mathcal{N}(\bm{\vartheta}, \rho)$. We draw inspiration from the \ac{fem} and we would like to construct a mesh of the domain such that the behaviour of the \ac{nn} is close to linear in each cell of the mesh. We design a method that approximates the smooth activation function $\rho$ by a \ac{cpwl} function $\pi[\rho]$ and bound the distance between the two functions.

\subsection{Assumptions on the activation function}

Since we have no prior bound on the parameters of the network, we need the approximation of $\rho$ by $\pi[\rho]$ to hold on $\RR$ as a whole. Since $\pi[\rho]$ is going to be piecewise linear, we need $\rho$ to behave linearly at infinity in order for $\rho - \pi[\rho]$ to be integrable on $\RR$ as a whole. Therefore, we require that $\rho$ be twice differentiable and that $\rho''$ vanish fast enough at infinity. As will be made clear in the proof of \lem{bound}, we also need $\rho$ to be analytic in the neighbourhood of the zeros of $\rho''$. Altogether, we restrict our approach to functions that belong to the functional spaces $\mathcal{A}^\alpha$ for $\alpha > 0$, defined as
\[
    \mathcal{A}^\alpha = \left\{\begin{array}{l}
        \rho: \RR \to \RR \text{ twice differentiable on } \RR \text{ such that}                          \\
        \rho'' \text{ and } x \mapsto \abs{x}^{2 + \alpha} \rho''(x) \text{ are bounded on } \RR \text{,} \\
        \rho \text{ is analytic in a neighbourhood of the zeros of } \rho''
    \end{array}\right\}.
\]
For convenience, we also introduce the union space $\mathcal{A} = \bigcup_{\alpha > 0} \mathcal{A}^\alpha$. The following lemma shows important properties of functions in $\mathcal{A}^\alpha$, which we use in the remainder of this paper.

\begin{lemma}[Some properties of functions in $\mathcal{A}^\alpha$]
    \label{lem:integrability}
    For all $\alpha > 0$ and $\rho \in \mathcal{A}^\alpha$,
    \begin{itemize}
        \item $\rho$ behaves linearly at infinity, i.e. there exist linear functions (slant asymptotes) $\rho_{+ \infty}$ and $\rho_{- \infty}$ such that $\rho - \rho_{\pm \infty}$ vanishes at $\pm \infty$.
        \item For all $p > 1/\alpha$, $\rho - \rho_{\pm \infty} \in L^p(\RR_\pm)$.
        \item For all $q > 1/(1 + \alpha)$, $\rho' - \rho_{\pm \infty}' \in L^q(\RR_\pm)$.
        \item For all $r > 1/(2 + \alpha)$, $\rho'' \in L^r(\RR)$.
    \end{itemize}
\end{lemma}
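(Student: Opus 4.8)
The plan is to reduce all four statements to a single pointwise decay estimate on $\rho''$ and then to integrate it twice. From the two boundedness hypotheses defining $\mathcal{A}^\alpha$ — that $\rho''$ and $x \mapsto \abs{x}^{2+\alpha}\rho''(x)$ are both bounded — I would first extract a constant $C > 0$ such that $\abs{\rho''(x)} \le C\min(1, \abs{x}^{-(2+\alpha)})$ for every $x \in \RR$. The whole lemma is then a matter of observing that integrating the tail of $t^{-(2+\alpha)}$ gains exactly one power of decay at each step: $\int_x^{+\infty} t^{-(2+\alpha)}\D t$ behaves like $x^{-(1+\alpha)}$, and integrating once more produces $x^{-\alpha}$. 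Each of the three $L^p$ thresholds $1/(2+\alpha)$, $1/(1+\alpha)$ and $1/\alpha$ is simply the reciprocal of the exponent appearing at the corresponding stage.

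The decay estimate gives the fourth item immediately: $\abs{\rho''}^r$ is bounded near the origin and dominated by $C^r\abs{x}^{-(2+\alpha)r}$ away from it, and the latter is integrable at infinity precisely when $(2+\alpha)r > 1$. In particular, taking $r = 1$ (legitimate since $2+\alpha > 1$) shows $\rho'' \in L^1(\RR_\pm)$, so that $\rho'(x) = \rho'(0) + \int_0^x \rho''$ converges to finite limits $a_\pm := \lim_{x\to\pm\infty}\rho'(x)$. I would then set $\rho_{\pm\infty}(x) = a_\pm x + b_\pm$, with intercepts $b_\pm$ fixed below, so that $\rho_{\pm\infty}' \equiv a_\pm$.

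For the third item I would write $\rho'(x) - a_+ = -\int_x^{+\infty}\rho''$ for $x > 0$ and bound it using the decay estimate to get $\abs{\rho'(x) - a_+} \le C' x^{-(1+\alpha)}$ for $x \ge 1$; hence $\rho' - a_+ \in L^q(\RR_+)$ exactly when $(1+\alpha)q > 1$, and the mirror-image computation on $\RR_-$ handles $a_-$. Because $\rho' - a_\pm$ is then integrable on the tails, $\rho(x) - a_\pm x$ converges, which is what fixes $b_\pm := \lim_{x\to\pm\infty}(\rho(x) - a_\pm x)$ and establishes the first item. Finally, for the second item I would write $\rho(x) - \rho_{+\infty}(x) = -\int_x^{+\infty}(\rho'(t) - a_+)\D t$ and reuse the previous tail bound to obtain $\abs{\rho(x) - \rho_{+\infty}(x)} \le C'' x^{-\alpha}$ for $x \ge 1$, which lies in $L^p(\RR_+)$ precisely when $\alpha p > 1$.

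I expect no conceptual obstacle, only bookkeeping. The two points to treat with some care are the contributions near the origin — handled by continuity of $\rho$ and $\rho'$ together with boundedness of $\rho''$, so that each integrand is bounded on $\cc{-1}{1}$ and contributes a finite amount to every $L^p$ norm — and the systematic passage between the two sides $\RR_\pm$, obtained by replacing $\int_x^{+\infty}$ with $\int_{-\infty}^x$ throughout. Note that the analyticity hypothesis in the definition of $\mathcal{A}^\alpha$ is not needed here; it is only required later, for \lem{bound}, so I would not invoke it.
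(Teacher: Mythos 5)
Your proposal is correct and follows essentially the same route as the paper's proof: both establish the tail integrability of $\rho''$ from the decay hypothesis, deduce the finite limits of $\rho'$, bound $\rho'-a_\pm$ by the tail integral of $\rho''$ to get decay of order $\abs{x}^{-(1+\alpha)}$, integrate once more to get decay of order $\abs{x}^{-\alpha}$ for $\rho-\rho_{\pm\infty}$, and read off the $L^p$ thresholds from the power comparison test while handling the compact part by boundedness. Your remark that the analyticity hypothesis is not needed here is accurate; the paper's proof does not invoke it either.
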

\begin{proof}
    See \ref{proof:lem:integrability}.
\end{proof}

\subsection{Approximation space}
\label{subsect:approximation_space}

In our experiments, we observed that the way we approximate the activation function by a \ac{cpwl} counterpart plays an important role in the performance of our adaptive quadrature. There are existing algorithms and heuristics to build $\pi[\rho]$ \cite{berjon2015} but we found that $\pi[\rho]$ needs to satisfy extra properties for our method to be most efficient in practice. In particular, we noticed that it is preferable that the \ac{cpwl} approximation of $\rho$ lie below $\rho$ on the regions where $\rho$ is convex, and above $\rho$ where $\rho$ is concave. To that aim, we wish to decompose $\RR$ into intervals where $\rho$ is either strictly convex, linear, or strictly concave. We can then approximate $\rho$ on each interval by connecting the tangents to $\rho$ at free points. More formally, we define the partition induced by $\rho$ as follows.

\begin{definition}[Partition induced by a function]
    Let $\rho \in \mathcal{A}$. We define the set $Z_0(\rho) = \{I \subset \RR, \rho''_{|I} = 0\}$ that contains the intervals (possibly isolated points) where $\rho''$ is zero and $Z(\rho) = \partial Z_0(\rho) \cup \{-\infty, +\infty\}$ the set of points where $\rho''$ is zero, excluding intervals but including $\pm \infty$. We write $z(\rho) = \card{Z(\rho)}$ and $\xi_1 < \ldots < \xi_{z(\rho)}$ the ordered elements of $Z(\rho)$. The \emph{partition induced by $\rho$} is defined as $P(\rho) = \{\oo{\xi_i}{\xi_{i+1}}, 1 \leq i \leq z(\rho) - 1\}$, that is the collection of intervals whose ends are two consecutive elements of $Z(\rho)$. The elements of $P(\rho)$ are called \emph{compatible intervals for $\rho$}. We finally introduce $1 \leq \kappa(\rho) \leq z(\rho) - 1$ the number of segments in $P(\rho)$ where $\rho$ is not linear.
\end{definition}

By way of example, the second derivative of $\abse$ is always positive so $Z_0(\abse) = \varnothing$, and we infer that $Z(\abse) = \{-\infty, +\infty\}$, $P(\abse) = \{\RR\}$, $z(\abse) = 2$ and $\kappa(\abse) = 1$. However, the second derivative of $\tanh$ vanishes at zero so $Z_0(\tanh) = \{0\}$, and it follows that $Z(\tanh) = \{-\infty, 0, +\infty\}$, $P(\tanh) = \{\oo{-\infty}{0}, \oo{0}{+\infty}\}$, $z(\tanh) = 3$ and $\kappa(\tanh) = 2$.

For $\rho \in \mathcal{A}$ and $x \in \RR$, we write $T[\rho, x]$ the tangent to $\rho$ at $x$ and we extend this notation to $x = \pm \infty$ by considering the slant asymptotes to $\rho$. From the construction of $P(\rho)$, it is easy to see that for any interval $I \in P(\rho)$ such that $\rho$ is not linear on $I$ and for all $x < y \in I$, the two tangents $T[\rho, x]$ and $T[\rho, y]$ intersect at a unique point in $\oo{x}{y}$ that we write $c(x, y)$. Let $T[\rho, x, y]$ denote the \ac{cpwl} function defined on $\oo{x}{y}$ as $T[\rho, x]$ on $\oc{x}{c(x,y)}$ and $T[\rho, y]$ on $\co{c(x,y)}{y}$.

We are now ready to define our approximation space. For $n \geq z(\rho)$, we define the space $\mathcal{A}_n(\rho)$ of \ac{cpwl} functions that connect tangents to $\rho$ at $Z(\rho)$ as well as $n - z(\rho)$ other free points that do not belong to $Z(\rho)$. More formally, let $\mathcal{S}_n$ denote the set of increasing sequences of length $n$ in $\RR \backslash Z(\rho)$. Next, let $\mathcal{S}_n(\rho)$ be the set of increasing sequences of length $n$ in $\RR$ obtained by concatenating the elements of $\mathcal{S}_{n - z(\rho)}$ with $Z(\rho)$. For $S \in \mathcal{S}_n(\rho)$ and $1 \leq k \leq n$, let $S_k$ be the $k$-th element of $S$. For convenience, we set the convention $S_0 = -\infty$ and $S_{n+1} = +\infty$. Using the extensions of $T$ and $c$ to $\pm \infty$, $\mathcal{A}_n(\rho)$ can be defined as
\[
    \mathcal{A}_n(\rho) = \left\{\begin{array}{l}
        \rho_n: \RR \to \RR \text{ such that there exists } S \in \mathcal{S}_n(\rho) \text{ and for all } 1 \leq k \leq n \text{,}  \\
        \text{the restriction of } \rho_n \text{ to } \oo{c(S_{k-1}, S_k)}{c(S_k, S_{k+1})} \text{ is } T[\rho, S_k]
    \end{array}\right\}.
\]
As an example, the function plotted in \fig{cpwl_tanh} belongs to $\mathcal{A}_7(\tanh)$. There are four free points because $z(\tanh) = 3$. The square points are fixed and they are located at $\pm \infty$ and zero.

\subsection{Convergence rate}

For $\rho \in \mathcal{A}$, $n \geq z(\rho)$ and $p \geq 2$, we write $\pi_{n, p}[\rho]$ the best approximation of $\rho$ in $\mathcal{A}_n(\rho)$ in the $L^p$ norm. In this paragraph, we bound the distance between $\rho$ and $\pi_{n, p}[\rho]$, and determine the convergence rate of this approximation. We start by finding an upper bound for the distance between $\rho$ and $T[\rho, x, y]$ for $x < y \in I$, where $I$ is a compatible interval for $\rho$.

\begin{lemma}[Distance between a function and its tangents]
    \label{lem:bound}
    Let $\alpha > 0$ and $\rho \in \mathcal{A}^\alpha$. Let $p \in \oo{1/\alpha}{\infty}$. There exist constants $A_p(\rho)$ and $B_p(\rho)$ such that for all interval $I$ compatible for $\rho$ and for all $J = \oo{x}{y} \subset I$, it holds
    \begin{align*}
        \aabs{\rho - T[\rho, x, y]}_{L^p(J)} \leq A_p(\rho) \aabs{\rho''}_{L^{p/(2p+1)}(J)}, \\
        \aabs{\rho' - T[\rho, x, y]'}_{L^p(J)} \leq B_q(\rho) \aabs{\rho''}_{L^{p/(p+1)}(J)}.
    \end{align*}
    These bounds extend to the case $p = \infty$: there exist constants $A_\infty(\rho)$ and $B_\infty(\rho)$ such that for all interval $I$ compatible for $\rho$ and for all $J = \oo{x}{y} \subset I$, it holds
    \begin{align*}
        \aabs{\rho - T[\rho, x, y]}_{L^\infty(J)} \leq A_\infty(\rho) \aabs{\rho''}_{L^{1/2}(J)}, \\
        \aabs{\rho' - T[\rho, x, y]'}_{L^\infty(J)} \leq B_\infty(\rho) \aabs{\rho''}_{L^1(J)}.
    \end{align*}
\end{lemma}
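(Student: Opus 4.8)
The plan is to reduce to a single compatible interval on which $\rho$ has constant convexity and to represent the approximation error by the integral form of Taylor's theorem, which turns it into a one-sided (Riemann--Liouville) fractional integral of $\rho''$. First I would dispose of the trivial and symmetric cases: on a compatible interval where $\rho$ is linear the error vanishes, and the concave case reduces to the convex one by replacing $\rho$ with $-\rho$, which leaves $\rho''$ and the tangent construction unchanged up to sign. So I assume $\rho$ is convex on the compatible interval $I$, hence $\rho'' \geq 0$ on $J = \oo{x}{y}$ and both tangents $T[\rho, x]$ and $T[\rho, y]$ lie below $\rho$, with $T[\rho, x, y]$ their upper envelope, the two pieces meeting at $c = c(x, y) \in J$.

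Next I would write the error on each tangent piece explicitly. On $\oo{x}{c}$, using $T[\rho, x](t) = \rho(x) + \rho'(x)(t - x)$, the integral remainder gives
\[
    \rho(t) - T[\rho, x](t) = \int_x^t (t - s) \rho''(s) \D s, \qquad \rho'(t) - T[\rho, x]'(t) = \int_x^t \rho''(s) \D s,
\]
and symmetrically on $\oo{c}{y}$ with the tangent at $y$ and integrals from $y$. On each piece both the function error and the derivative error are monotone and of one sign, and they are precisely fractional integrals of $\rho''$ of orders two and one respectively. The $L^\infty$ bounds then follow at once: the derivative error is bounded by $\int_x^y \rho'' = \aabs{\rho''}_{L^1(J)}$, yielding $B_\infty(\rho) = 1$, and a one-line estimate using the distance to $c$ gives the $L^\infty$ bound for the function error against $\aabs{\rho''}_{L^{1/2}(J)}$.

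The heart of the matter is the $L^p$ estimate, for which the target exponents $p/(2p+1)$ and $p/(p+1)$ are exactly the Hardy--Littlewood--Sobolev exponents for one-sided fractional integration of orders two and one ($1/p = 1/s - 2$ and $1/p = 1/s - 1$). I would prove these by the substitution $w = \int_x^t \rho''(s) \D s$, which linearises the derivative error and recasts each piece as a one-dimensional inequality for $\phi = 1/\rho''$ expressed in the variable $w$; the homogeneity of the two sides in $\phi$ and in the length of $J$ matches the claimed exponents. The delicate point is that the source exponent $s = p/(p+1) < 1$ lies strictly below the classical Hardy--Littlewood--Sobolev range, so the inequality is \emph{false} for arbitrary $\phi$ (a spike of $1/\rho''$ breaks it) and cannot be invoked as a black box. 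What rescues it is the geometric constraint that $c$ is the meeting point of the two tangents, which translates into $\int_x^y (\rho'(t) - \rho'(x)) \D t = (\rho'(y) - \rho'(x))(y - c)$ and pins the switch point between the two pieces; combined with the constant sign of $\rho''$ on $I$, this coupling is what closes the estimate, and I expect it to be the main obstacle.

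Finally I would assemble the two pieces, sum the contributions, and take $A_p(\rho)$ and $B_p(\rho)$ to be the resulting suprema over all subintervals $J \subseteq I$ and over the finitely many compatible intervals of $P(\rho)$. Their finiteness is exactly where the hypotheses defining $\mathcal{A}^\alpha$ and the analyticity assumption enter: analyticity forces the zeros of $\rho''$ at the endpoints of the compatible intervals to be of finite order, so $1/\rho''$ blows up at most polynomially there and the relevant integrals converge uniformly in $J$, while the decay encoded in $\abs{x}^{2 + \alpha} \rho''$ being bounded, together with \lem{integrability}, controls the unbounded compatible intervals reaching $\pm\infty$.
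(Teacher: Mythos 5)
Your reduction to a convex compatible interval, the integral Taylor remainders, and the observation that $B_\infty(\rho)=1$ works for the derivative are fine, but the heart of the lemma is left unproven, and the mechanism you propose for closing it cannot work. An estimate of the form $\aabs{\rho - T[\rho,x,y]}_{L^p(J)} \leq A_p \aabs{\rho''}_{L^{p/(2p+1)}(J)}$ is \emph{false} with a constant depending only on the data you retain (positivity of $\rho''$ and the coupling $\int_x^y(\rho'(t)-\rho'(x))\D t = (\rho'(y)-\rho'(x))(y-c)$, which is just the definition of $c$). Concretely, let $\rho''$ consist of two narrow bumps of width $\ell$ and equal mass $m$ centred at $s_1<s_2$, plus a small positive background so that the interval is compatible. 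Then $c$ sits midway between the bumps, the peak error $\rho(c)-T[\rho,x,y](c)=\int_x^c(c-s)\rho''(s)\D s$ is of order $m(s_2-s_1)$, while $\aabs{\rho''}_{L^{1/2}(J)}$ is of order $m\ell$, so the ratio blows up as $\ell\to 0$. This configuration satisfies every constraint you invoke, so the \enquote{geometric rescue} via the meeting point $c$ does not exist; the sub--Hardy--Littlewood--Sobolev exponent really is out of reach of any inequality that sees only $\rho''\geq 0$ and the tangent construction. The same example already defeats your claimed one-line $L^\infty$ bound of the function error against $\aabs{\rho''}_{L^{1/2}(J)}$.

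What makes the lemma true is that $A_p(\rho)$ may depend on the whole function $\rho$, so the only thing to prove is uniformity over subintervals $J$ of a fixed compatible interval. The paper's proof is of a completely different, compactness-type nature: it studies the ratio $r(x,y)=\aabs{\rho-T[\rho,x,y]}_{L^p(\oo{x}{y})}/\aabs{\rho''}_{L^{p/(2p+1)}(\oo{x}{y})}$, shows it is continuous and bounded on $\{y-x\geq\delta\}$ using \lem{integrability} (which also handles infinite endpoints via the slant asymptotes), and then --- this is where the analyticity hypothesis is genuinely used --- shows that $r(x,x+\epsilon)$ has a finite, explicitly computable limit as $\epsilon\to 0$, obtained from the order-$k$ Taylor expansion of $\rho$ at $x$, where $k$ is the vanishing order of $\rho''$; the same expansion gives $c(x,x+\epsilon)=x+\frac{k-1}{k}\epsilon+O(\epsilon^2)$. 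Boundedness of $r$ on the closure then yields the constants. Your final paragraph gestures at the right ingredients (finite-order zeros, decay at infinity), but deploys them only to control degenerate limits of constants that you have not shown to be finite for non-degenerate $J$ in the first place; without an argument of this continuity-and-limits type, the $L^p$ bounds and the $L^\infty$ bound on the function error remain open in your proposal.
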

\begin{proof}
    See \ref{proof:lem:bound}.
\end{proof}

Thanks to this lemma, we can bound the distance between $\rho$ and $\pi_{n, p}[\rho]$ and prove a quadratic convergence for all $p$. We also prove a linear convergence rate for the distance between $\rho'$ and $\pi_{n, p}[\rho]'$.

\begin{proposition}[Best approximation in $\mathcal{A}_n$]
    \label{prop:cpwl}
    Let $\alpha > 0$ and $\rho \in \mathcal{A}^\alpha$. Let $n \geq z(\rho)$, $p \in \oo{1/\alpha}{\infty}$ and $q \geq p$. Using the constants from \lem{bound}, it holds
    \begin{align*}
        \aabs{\rho - \pi_{n, p}[\rho]}_{L^q(\RR)}   & \leq \frac{A_q(\rho) \kappa(\rho)^{(2q + 1)/q} \aabs{\rho''}_{L^{q/(2q+1)}(\RR)}}{n^2},                                                    \\
        \aabs{\rho' - \pi_{n, p}[\rho]'}_{L^q(\RR)} & \leq \frac{B_q(\rho) \kappa(\rho)^{(q+1)/q} \aabs{\rho''}_{L^{q/(2q+1)}(\RR)}^{(q+1)/(2q+1)} \aabs{\rho''}_{L^\infty(\RR)}^{q/(2q+1)}}{n}.
    \end{align*}
    These bounds extend to the cases $p < \infty$, $q = \infty$ and $p = q = \infty$: using the constants from \lem{bound}, it holds
    \begin{align*}
        \aabs{\rho - \pi_{n, p}[\rho]}_{L^\infty(\RR)}   & \leq \frac{A_\infty(\rho) \kappa(\rho)^2 \aabs{\rho''}_{L^{1/2}(\RR)}}{n^2},                                       \\
        \aabs{\rho' - \pi_{n, p}[\rho]'}_{L^\infty(\RR)} & \leq \frac{B_\infty(\rho) \kappa(\rho) \aabs{\rho''}_{L^{1/2}(\RR)}^{1/2} \aabs{\rho''}_{L^\infty(\RR)}^{1/2}}{n}.
    \end{align*}
\end{proposition}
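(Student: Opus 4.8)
The plan is to reduce the global error to a sum of local errors over the subintervals delimited by consecutive tangent points, control each local error with \lem{bound}, and then choose the free points so as to equidistribute the error. I would first observe that any $\rho_n \in \mathcal{A}_n(\rho)$ coincides with $T[\rho, S_k, S_{k+1}]$ on each interval $\oo{S_k}{S_{k+1}}$, so that, writing $J_k = \oo{S_k}{S_{k+1}}$,
\[
    \aabs{\rho - \rho_n}_{L^q(\RR)}^q = \sum_{k} \aabs{\rho - T[\rho, S_k, S_{k+1}]}_{L^q(J_k)}^q .
\]
On every compatible interval where $\rho$ is linear the corresponding summands vanish, so only the $\kappa(\rho)$ nonlinear compatible intervals contribute, and it suffices to place the $n - z(\rho)$ free points inside these. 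The number of nonlinear subintervals is then $N = n - z(\rho) + \kappa(\rho)$, which is proportional to $n$; this is where the rate in $n$ and the factor $\kappa(\rho)$ are decided. Finiteness of the right-hand sides is guaranteed by \lem{integrability}, since $p > 1/\alpha$ and $q \geq p$ give $r := q/(2q+1) = 1/(2 + 1/q) > 1/(2 + \alpha)$, hence $\rho'' \in L^r(\RR)$, while $\rho''$ is bounded by definition of $\mathcal{A}^\alpha$.

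Next I would apply the first inequality of \lem{bound} with exponent $q$ to each nonlinear summand, bounding it by $A_q(\rho) \aabs{\rho''}_{L^{r}(J_k)}$, and set $\mu_k = \int_{J_k} \abs{\rho''}^r$. I would split the free points equally among the $\kappa(\rho)$ nonlinear intervals, giving $N_i \approx N / \kappa(\rho)$ subintervals in interval $I_i$, and within each interval distribute them so that all $\mu_k$ are equal; this is admissible because $x \mapsto \int_{-\infty}^x \abs{\rho''}^r$ is continuous and the local analyticity hypothesis in the definition of $\mathcal{A}$ prevents $\rho''$ from vanishing on a nondegenerate subinterval. Using $q/r = 2q + 1$, equidistribution inside $I_i$ gives $\sum_{k \in I_i} \mu_k^{2q+1} = M_i^{2q+1} / N_i^{2q}$ with $M_i = \int_{I_i} \abs{\rho''}^r$. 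Summing over the $\kappa(\rho)$ intervals and using $M_i \leq \aabs{\rho''}_{L^r(\RR)}^r$ yields a bound of order $\kappa(\rho)\,(\kappa(\rho)/N)^{2q}\, \aabs{\rho''}_{L^r(\RR)}^{q}$; taking the $1/q$-th root and using $r(2q+1)/q = 1$ and $N = \Theta(n)$ produces exactly the power $\kappa(\rho)^{(2q+1)/q}$ and the rate $n^{-2}$.

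For the derivative estimate I would start from the second inequality of \lem{bound}, which controls $\aabs{\rho' - T[\rho, S_k, S_{k+1}]'}_{L^q(J_k)}$ by $B_q(\rho) \aabs{\rho''}_{L^{q/(q+1)}(J_k)}$. The new ingredient is the interpolation inequality (log-convexity of $L^s$ norms): with $s = q/(q+1)$ lying between $r = q/(2q+1)$ and $\infty$, one has $\aabs{\rho''}_{L^{q/(q+1)}(J_k)} \leq \aabs{\rho''}_{L^r(J_k)}^{(q+1)/(2q+1)} \aabs{\rho''}_{L^\infty(J_k)}^{q/(2q+1)}$, the two exponents being precisely those of the statement. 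Reusing the same equidistribution of the $\mu_k$ and bounding $\aabs{\rho''}_{L^\infty(J_k)} \leq \aabs{\rho''}_{L^\infty(\RR)}$, the summation now carries the exponent $q + 1$ instead of $2q + 1$, giving the $n^{-1}$ rate and the factor $\kappa(\rho)^{(q+1)/q}$. The cases $q = \infty$ are treated identically, replacing each sum over the $J_k$ by a maximum and invoking the $L^\infty$ versions of \lem{bound} (with $L^{1/2}$ and $L^1$ on the right), which recover the announced powers $\kappa(\rho)^2$ and $\kappa(\rho)$.

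The main obstacle I anticipate is twofold. First, the bookkeeping of the allocation of the $n - z(\rho)$ free points among the $\kappa(\rho)$ nonlinear intervals must be handled with care: a genuinely optimal allocation balances the masses across all subintervals simultaneously, whereas the clean factors $\kappa(\rho)^{(2q+1)/q}$ and $\kappa(\rho)^{(q+1)/q}$ arise from an equal split between intervals followed by the crude power-sum bound $\sum_i M_i^{s} \leq \kappa(\rho)\,\aabs{\rho''}_{L^r(\RR)}^{rs}$, and one must verify that this choice is admissible with an integer number of points per interval while still delivering the stated rate in $n$. Second, since $\pi_{n,p}[\rho]$ is by definition the best approximation in the $L^p$ norm whereas the bound is stated in $L^q$ with $q \geq p$, I would prove the estimate for the explicitly constructed equidistributed competitor $\rho_n^\star \in \mathcal{A}_n(\rho)$ and then transfer it to $\pi_{n,p}[\rho]$; making this transfer rigorous, rather than simply invoking minimality in the wrong norm, is the delicate point of the argument.
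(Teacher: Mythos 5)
Your proposal follows essentially the same route as the paper's proof: restrict to the $\kappa(\rho)$ nonlinear compatible intervals, apply \lem{bound} on each subinterval, choose the free points so as to equidistribute $\int \abs{\rho''}^{q/(2q+1)}$, and use the $L^\infty$--$L^{q/(2q+1)}$ interpolation (the paper's H\"older step) to handle the derivative bound and the $n^{-1}$ rate. The one delicate point you flag --- transferring the estimate from the explicit equidistributed competitor to the $L^p$-best approximant $\pi_{n,p}[\rho]$ measured in the $L^q$ norm --- is also left implicit in the paper's own proof, which simply bounds the constructed competitor.
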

\begin{proof}
    See \ref{proof:prop:cpwl}.
\end{proof}

\subsection{Numerical implementation}

In practice, we choose $p = 2$ and implement the least-square approximation of $\rho$ on $\mathcal{A}_n(\rho)$. We write $\pi_n$ instead of $\pi_{n, 2}$. Since we are searching for the best approximant within $\mathcal{A}_n[\rho]$, the unknowns to be determined are the locations of the tangents $(\xi_k)_{1 \leq k \leq n-z(\rho)}$. We write the total $L^2$ norm between $\rho$ and $\pi_n[\rho]$ in terms of $(\xi_k)$ and seek to minimise this quantity via a gradient descent algorithm.

We are interested in two activation functions: $\abse$ and $\tanh$. Both functions are symmetric around the origin, which allows us to restrict the optimisation domain. We also verify that both functions belong to $\mathcal{A}$.
\begin{itemize}
    \item The relationship $\abse(x) - \abse(-x) = x$ holds for all $x \in \RR$. We impose the same condition on $\pi_n[\abse]$ and verify that $\abse - \pi_n[\abse]$ is even. Since $\abse$ takes bounded values on $\RR_-$, we approximate $\abse$ by $\pi_n[\abse]$ on $\RR_-$ only and recover $\pi_n[\abse]$ as a whole by symmetry. The second derivative of $\abse$ is $x \mapsto \frac{1}{2 \gamma} (1 + \overline{x}^2)^{-3/2}$, which decays as $x^{-3}$. We verify that the second derivative of $\abse$ is never zero. This shows that $\abse \in \mathcal{A}^\alpha$ with $\alpha = 1$. The slant asymptotes of $\abse$ intersect, so we can approximate $\abse$ with $2$ pieces and this approximation is $\ReLU$.
    \item The $\tanh$ function is odd, so imposing that $\pi_n[\tanh]$ is odd enforces that $\tanh - \pi_n[\tanh]$ is even. We decide to perform the minimisation on $\RR_+$ and obtain an approximation of $\tanh$ on $\RR$ as a whole by symmetry. The second derivative of $\tanh$ is $x \mapsto -2 \tanh(x) (1 - \tanh(x)^2)$. This is asymptotically equivalent to $-8 \exp(-2x)$. In particular, for all $\alpha > 0$, $x \mapsto x^{2 + \alpha} \tanh''(x)$ is bounded on $\RR$. Moreover, $\tanh''(x) = 0$ if and only if $x = 0$, and $\tanh$ is analytic in a neighbourhood of zero (its Taylor series converges within a radius of $\pi / 2$). This shows that $\tanh \in \mathcal{A}^\alpha$ for all $\alpha > 0$. Since the slant asymptotes of $\tanh$ do not intersect, the minimum number of pieces to approximate $\tanh$ on $\RR$ is three and the corresponding \ac{cpwl} function is known as the hard $\tanh$ activation.
\end{itemize}

As an example, we plot $\pi_7[\tanh]$ in \fig{cpwl_tanh}. The convergence plot in $L^2$ norm of our method applied to both functions is shown in \fig{convergence_rate}, together with the theoretical convergence rate of $-2$. We verify that the convergence is asymptotically quadratic. We notice that the approximation of $\abse$ shows a preasymptotic behaviour in which the effective convergence rate is closer to $1.5$.

\begin{figure}
    \centering
    \subfloat[Visualisation of $\pi_7(\tanh)$.\label{fig:cpwl_tanh}]{
        \includegraphics[width=0.45\linewidth]{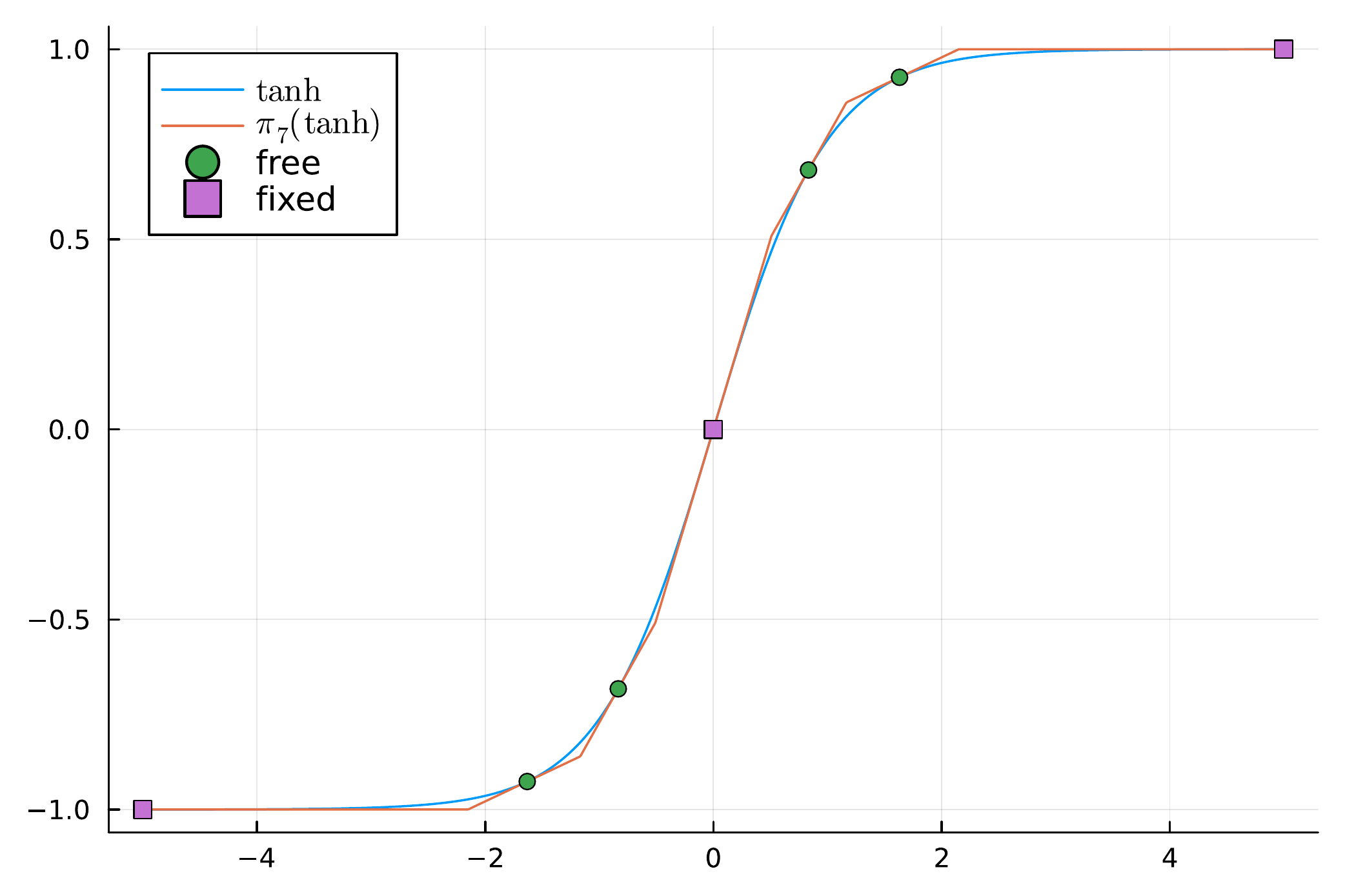}
    }
    \hfill
    \subfloat[Convergence plot in $L^2$ norm.\label{fig:convergence_rate}]{
        \includegraphics[width=0.45\linewidth]{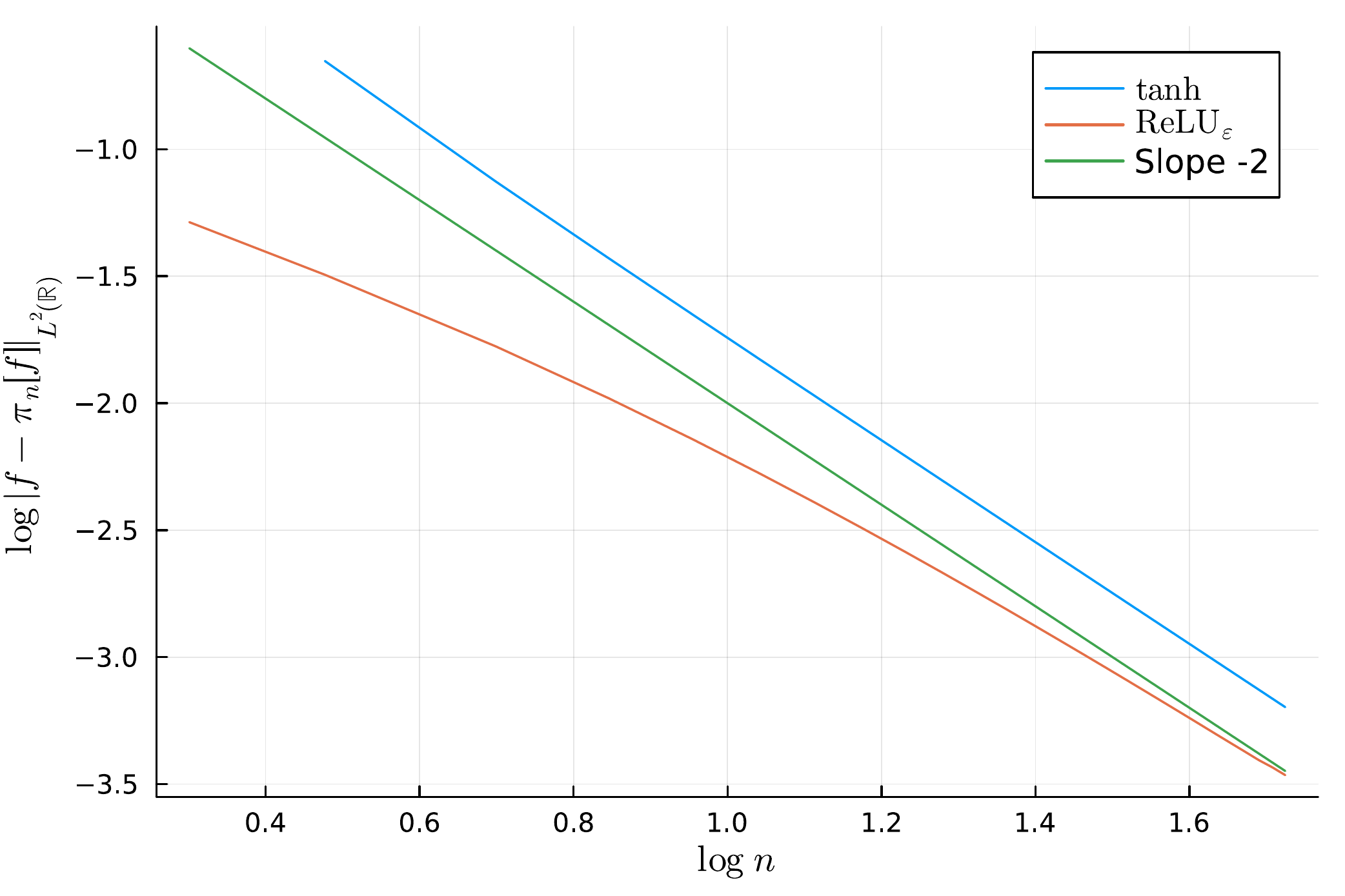}
    }
    \caption{Visualisation of $\pi_7(\tanh)$ and convergence plot of our proposed method in $L^2$ norm for $\abse$ and $\tanh$. On \protect\subref{fig:cpwl_tanh}, the square pink points are fixed, while the circle green points are chosen to minimise the $L^2$ norm of $\tanh - \pi_7[\tanh]$.}
    \label{fig:cpwl_convergence}
\end{figure}

\section{Adaptive meshes and quadratures for neural networks}
\label{sect:adaptivity}
In practice the \ac{cpwl} approximation of $\rho$ is going to be fixed, so we only keep the subscripts $n$ and $p$ in $\pi_{n, p}$ when we discuss the convergence at the end of this section. Let $u = \mathcal{N}(\bm{\vartheta}, \rho)$ and $\pi[u] = \mathcal{N}(\bm{\vartheta}, \pi[\rho])$ be two \acp{nn} sharing the same weights, activated by $\rho$ and $\pi[\rho]$ respectively. We are interested in recovering a maximal decomposition of $\Omega$ (resp. $\Gamma$) such that $\pi[u]$ is linear on these regions. We say that this decomposition is a mesh of $\Omega$ (resp. $\Gamma$) adapted to $\pi[u]$. Since $\pi[\rho]$ is fixed, this mesh depends only on $\bm{\vartheta}$. We introduce the notation $\tau_\Omega(\bm{\vartheta})$ and $\tau_\Gamma(\bm{\vartheta})$ to refer to these meshes, and $\tau(\bm{\vartheta})$ to refer to either of the two meshes. We equip each cell with a Gaussian quadrature to evaluate the loss function. Bounds for the integration error of our \ac{aq} are provided at the end of this section.

\subsection{Convexity of the mesh}

We observe that each neuron corresponds to a scalar-valued linear map followed by the composition by $\pi[\rho]$. The breakpoints of $\pi[\rho]$ define hyperplanes in the input space of each neuron. Indeed, the composition of a linear map $\bm{x} \mapsto \bm{W} \bm{x} + \bm{b}$ by $\pi[u]$ is \ac{cpwl} and the boundaries of the regions where the composition is linear correspond to the equations $\bm{w} \cdot \bm{x} + b = \xi$, where $\bm{w}$ can be any row vector of $\bm{W}$, $b$ is the coordinate of $\bm{b}$ corresponding to the same row, and $\xi$ can be any breakpoint of $\pi[\rho]$.

Intuitively, we can obtain the adapted meshes $\tau(\bm{\vartheta})$ by considering the hyperplanes attached to every neuron of the network. The cells of the mesh are the largest sets of points that lie within exactly one piece of $\pi[\rho]$ at each neuron. This is why these regions are also called the \enquote{activation patterns} of a \ac{nn}, as we can label them by the pieces they belong to at each neuron \cite{hanin2019}.

The implementation of an algorithm that, given the parameters $\bm{\vartheta}$ of a \ac{nn}, the linearisation $\pi[\rho]$ and the input space $\Omega$, outputs $\tau_\Omega(\bm{\vartheta})$ has to be robust to a large variety of corner cases, as in practice the cells can be of extremely small measures, or take skewed shapes. Besides, we are concerned with the complexity of this algorithm as it is meant to be run at every iteration (or at a given frequency) during the training phase. Fortunately enough, the only cells in the mesh that may not be convex must be in contact with the boundary.

\begin{lemma}[Convexity of the mesh]
    \label{lem:convexity}
    Let $u: \RR^d \to \RR$ be a \ac{nn} with weights $\bm{\vartheta}$ activated by a \ac{cpwl} function and $\Omega \subset \RR^d$. If a cell in $\tau_\Omega(\bm{\vartheta})$ is not convex, it has to intersect with the boundary of $\Omega$. In particular, if $\Omega$ is convex, all the cells of $\tau_\Omega(\bm{\vartheta})$ are convex.
\end{lemma}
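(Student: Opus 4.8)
The plan is to reduce the statement to the convexity of the cells of the \emph{global} decomposition of $\RR^d$ induced by the activation patterns of $u$, and only afterwards to bring in the domain $\Omega$. Concretely, I would first show that each region of $\RR^d$ corresponding to a fixed activation pattern (a fixed linear piece of the \ac{cpwl} activation at every neuron) is convex, and then argue that intersecting such a convex region with $\Omega$ is the only way convexity can be lost, and that this loss forces the cell to touch $\partial\Omega$.

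The core step is an induction on the layer index $k$. For a fixed choice of a linear piece of the \ac{cpwl} activation at every neuron up to and including layer $k$, let $R_k \subseteq \RR^d$ be the set of inputs realising this partial activation pattern; I would prove that $R_k$ is convex. For $k = 1$, each neuron computes a pre-activation $\bm{w} \cdot \bm{x} + b$ that is globally affine in $\bm{x}$, so fixing its piece imposes $\xi_j \leq \bm{w} \cdot \bm{x} + b \leq \xi_{j+1}$ for two consecutive breakpoints $\xi_j, \xi_{j+1}$ of the activation (a single half-space for the two unbounded end pieces). This is an intersection of half-spaces, so $R_1$ is convex. For the induction step, the crucial observation is that \emph{on} $R_{k-1}$ the activation pattern of the first $k-1$ layers is frozen, so every coordinate of the layer-$(k-1)$ output is a genuinely affine function of $\bm{x}$ when restricted to $R_{k-1}$. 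Hence the pre-activation of any neuron at layer $k$, being a linear combination of these outputs plus a bias, is affine on $R_{k-1}$, and fixing its piece intersects $R_{k-1}$ with a slab $\{\xi_j \leq (\text{affine}) \leq \xi_{j+1}\}$. Intersecting a convex set with half-spaces preserves convexity, so $R_k$ is convex. Taking $k = L$ shows that every cell of the decomposition of $\RR^d$ is convex.

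Next I would relate these global cells to the cells of $\tau_\Omega(\bm{\vartheta})$: each such cell is $C \cap \Omega$ (or a connected component thereof) for a global convex cell $C$. Suppose this cell is not convex; then there are two points $\bm{p}, \bm{q}$ of the cell whose connecting segment leaves it. Since $C$ is convex, the whole segment $[\bm{p}, \bm{q}]$ remains in $C$, so it can exit the cell only by leaving $\Omega$. Its endpoints lie in $\Omega$ while some interior point lies outside $\Omega$, so by connectedness the segment crosses $\partial\Omega$ at a point $\bm{r} \in [\bm{p}, \bm{q}] \subseteq C$, proving that the cell is in contact with $\partial\Omega$. This is the contrapositive of the first assertion. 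For the final claim, if $\Omega$ is convex then $C \cap \Omega$ is an intersection of two convex sets and is therefore convex, so all cells of $\tau_\Omega(\bm{\vartheta})$ are convex.

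The hard part is the induction: it hinges on recognising that once the pattern is fixed on a region, the next layer's pre-activations become truly affine there, so each new breakpoint contributes a flat slab rather than a bent \ac{cpwl} interface, which is exactly what keeps the cumulative intersection convex. The deduction for $\Omega$ is then elementary convex geometry. The only points I would treat with care are the bookkeeping when $C \cap \Omega$ is disconnected (so that $\tau_\Omega$ cells are its components) and the convention of whether cells are taken open or closed, which determines whether \enquote{contact with $\partial\Omega$} is phrased for the cell itself or for its closure.
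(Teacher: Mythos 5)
Your proof is correct and follows essentially the same route as the paper's: establish convexity of the cells of the global arrangement $\tau_{\RR^d}(\bm{\vartheta})$, then observe that each cell of $\tau_\Omega(\bm{\vartheta})$ is a global convex cell intersected with $\Omega$, so non-convexity can only be caused by $\partial\Omega$ and disappears when $\Omega$ is convex. The only difference is that you spell out, via induction on the layers, why the global activation regions are convex (on a frozen pattern the next pre-activations are affine, so each breakpoint contributes a slab), a fact the paper asserts in a single sentence.
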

\begin{proof}
    See \ref{proof:lem:convexity}.
\end{proof}

If $\Omega$ is not convex, we decompose $\Omega$ into a set of convex polytopes and construct an adapted mesh for each polytope independently. By \lem{convexity} these submeshes are convex. The mesh composed of all the cells of the submeshes is adapted to $u$ on $\Omega$ and contains convex cells only. Thus without loss of generality, we suppose that $\Omega$ is convex. Clipping a line by a polygon is made easier when the polygon is convex. Our method to build $\tau_\Omega(\bm{\vartheta})$ takes great advantage of the convexity of the mesh. Our algorithm is described in \alg{adaptiveMesh}.

\subsection{Representation of a linear region}
\label{subsect:representation}

Let $u_k$ denote the composition of the first $k$ layers of the network $u$, and $\tau^k(\bm{\vartheta})$ be the mesh adapted to $u_k$. We represent a linear region of $\tau^k(\bm{\vartheta})$ and its corresponding local expression by $(R, \bm{W}_R, \bm{b}_R)$, where $R \in \tau^k(\bm{\vartheta})$ is a cell of the mesh, $\bm{W}_R \in \RR^{n_k \times d}$ and $\bm{b}_R \in \RR^{n_k}$ are the coefficients of the restriction of $\pi[u_k]$ to $R$: $\pi[u_k]_{\left|R\right.}(\bm{x}) = \bm{W}_R \bm{x} + \bm{b}_R$.

\subsection{Initialisation of the algorithm}

We initialise the algorithm with the region $(\Omega, \bm{I}_{d}, \bm{0}_d)$, where $\bm{I}_d$ is the identity matrix of size $d$ and $\bm{0}_d$ is the zero vector of size $d$. The mesh $\{(\Omega, \bm{I}_{d}, \bm{0}_d)\}$ is adapted to $u_0$.

\subsection{Composition by a linear map}

Suppose that $(R, \bm{W}_R, \bm{b}_R)$ is adapted to $u_k$. The composition of $\pi[u_k]_{\left|R\right.}$ by the linear map $\bm{\Theta}_{k+1}$ remains linear on $R$, only the coefficients of the map are changed. They become $\bm{W}_{k+1} \bm{W}_R$ and $\bm{W}_{k+1} \bm{b}_R + \bm{b}_{k+1}$.

\subsection{Composition by an activation function}

Suppose that $(R, \bm{W}_R, \bm{b}_R)$ is adapted to $u_k$. We compose $\pi[u_k]_{\left|R\right.}$ by $\pi[\rho]$ componentwise. Let $\bm{w}$ be one of the row vectors of $\bm{W}_R$ and $b$ be the coordinate of $\bm{b}_R$ corresponding to the same row. Let $\xi$ be one of the breakpoints of $\pi[\rho]$. We need to find the pre-image of $\xi$ under the map $\bm{x} \mapsto \bm{w} \cdot \bm{x} + b$, which corresponds to a hyperplane in $R$. These hyperplanes have to be determined for all rows of $\bm{W}_R$ and all breakpoints of $\pi[\rho]$. We underline that the hyperplanes corresponding to a given row $\bm{w}$ of $\bm{W}_R$ are parallel, as they are level sets of $\bm{x} \mapsto \bm{w} \cdot \bm{x}$. We explain our method in detail in \app{algo_mesh}.

The process of mesh extraction in dimension two is illustrated in \fig{mesh_extraction} for a single-layer neural network. In \fig{mesh_1}, the plain lines depict two parallel hyperplanes corresponding to two different breakpoints of $\pi[\rho]$ for one of the output coordinates of the linear map. The dashed line indicates the direction vector given by the corresponding row vector of the linear map. \fig{mesh_2} pictures the hyperplanes resulting from the other output coordinates of the linear map, thus oriented in different directions. We highlight that since the output coordinates of the linear maps may have different ranges, they may not individually activate all the pieces of $\pi[\rho]$, and thus they can give rise to different numbers of hyperplanes. The clipping operation is shown in \fig{mesh_3}, and the intersections of the hyperplanes against themselves are displayed in \fig{mesh_4}. In this example, the initial cell would be cut into $13$ subcells.

\begin{figure}
    \centering
    \subfloat[\label{fig:mesh_1}]{
        \def\svgwidth{0.2\linewidth}
\begingroup%
  \makeatletter%
  \providecommand\color[2][]{%
    \errmessage{(Inkscape) Color is used for the text in Inkscape, but the package 'color.sty' is not loaded}%
    \renewcommand\color[2][]{}%
  }%
  \providecommand\transparent[1]{%
    \errmessage{(Inkscape) Transparency is used (non-zero) for the text in Inkscape, but the package 'transparent.sty' is not loaded}%
    \renewcommand\transparent[1]{}%
  }%
  \providecommand\rotatebox[2]{#2}%
  \newcommand*\fsize{\dimexpr\f@size pt\relax}%
  \newcommand*\lineheight[1]{\fontsize{\fsize}{#1\fsize}\selectfont}%
  \ifx\svgwidth\undefined%
    \setlength{\unitlength}{136.30634573bp}%
    \ifx\svgscale\undefined%
      \relax%
    \else%
      \setlength{\unitlength}{\unitlength * \real{\svgscale}}%
    \fi%
  \else%
    \setlength{\unitlength}{\svgwidth}%
  \fi%
  \global\let\svgwidth\undefined%
  \global\let\svgscale\undefined%
  \makeatother%
  \begin{picture}(1,1.10178817)%
    \lineheight{1}%
    \setlength\tabcolsep{0pt}%
    \put(0,0){\includegraphics[width=\unitlength,page=1]{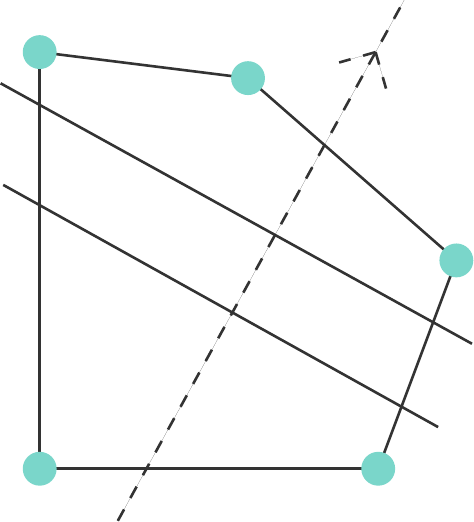}}%
    \put(0.49653848,0.30395298){\color[rgb]{0.2,0.2,0.2}\makebox(0,0)[lt]{\lineheight{1.25}\smash{\begin{tabular}[t]{l}\textbf{$\xi_1$}\end{tabular}}}}%
    \put(0.49653848,0.71662635){\color[rgb]{0.2,0.2,0.2}\makebox(0,0)[lt]{\lineheight{1.25}\smash{\begin{tabular}[t]{l}\textbf{$\xi_2$}\end{tabular}}}}%
  \end{picture}%
\endgroup%

    }
    \hfill
    \subfloat[\label{fig:mesh_2}]{
        \def\svgwidth{0.2\linewidth}
        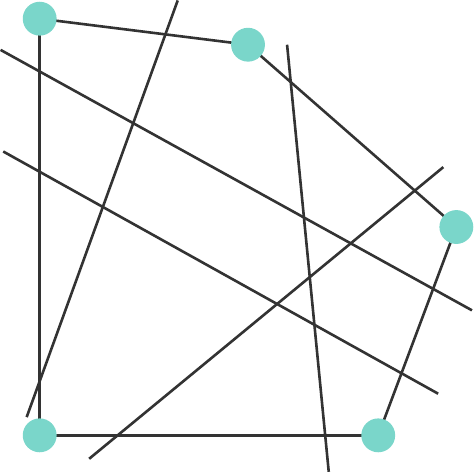
    }
    \hfill
    \subfloat[\label{fig:mesh_3}]{
        \def\svgwidth{0.2\linewidth}
        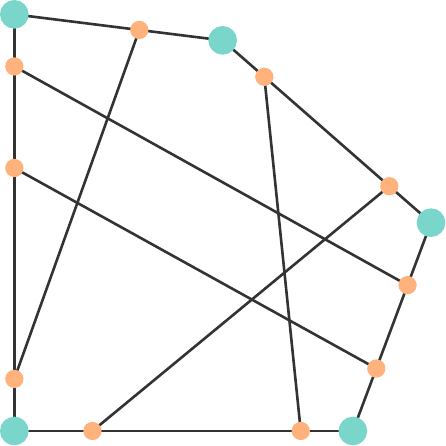
    }
    \hfill
    \subfloat[\label{fig:mesh_4}]{
        \def\svgwidth{0.2\linewidth}
        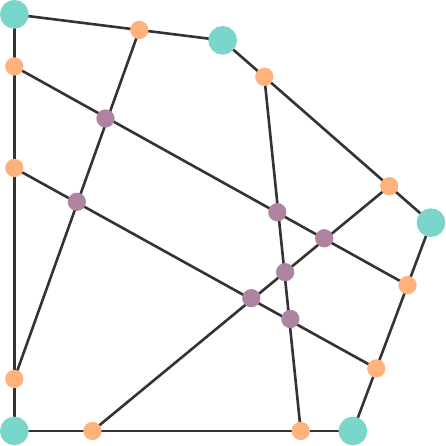
    }
    \caption{Example of a mesh extraction, corresponding to a neural network with architecture $(2, m)$ with $m \geq 4$, that is $u = \rho \circ \bm{\Theta}$, where $\bm{\Theta}: \bm{x} \mapsto \bm{W} \bm{x} + \bm{b}$, $\bm{W} \in \RR^{m \times 2}$ and $\bm{b} \in \RR^{m}$. \protect\subref{fig:mesh_1} Parallel hyperplanes associated with different breakpoints $\xi_1, \xi_2$ of $\pi[\rho]$ for one of the output coordinates of $\bm{\Theta}$ (they are orthogonal to one of the row vectors of $\bm{W}$). \protect\subref{fig:mesh_2} Hyperplanes corresponding to all the output coordinates of $\bm{\Theta}$. \protect\subref{fig:mesh_3} Clipping of the hyperplanes against the region boundary. \protect\subref{fig:mesh_4} Pairwise intersection of the hyperplanes.}
    \label{fig:mesh_extraction}
\end{figure}

\subsection{Gaussian quadratures for convex polygons}

We decompose the integrals in the linear and bilinear forms on the cells of $\tau_\Omega(\bm{\vartheta})$ and $\tau_\Gamma(\bm{\vartheta})$. In these cells, the terms that only depend on the network $\pi[u]$ and its spatial derivatives are polynomials. As a consequence, the linear and bilinear forms involving $\pi[u]$ can be computed exactly using Gaussian quadratures on segments in dimension one, and polygons in dimension two.

In dimension one, the cells of the mesh are segments. Gaussian quadrature rules are known and made available through plenty of libraries. However, in dimension two, the cells of the mesh can be arbitrary convex polygons. To handle the general case, one approach could consist in splitting each convex cell into triangles and then applying a known Gaussian quadrature for triangles. This approach is the least economical in terms of the number of quadrature points. At the opposite end of the spectrum, we could rely on Gaussian quadratures of reference $n$-gons. Still, the order of the mapping from the reference $n$-gon to the $n$-gon at hand is $n-2$, which makes the Jacobian of this mapping costly to evaluate. In this work, we strike a tradeoff and decompose each convex cell into a collection of triangles and convex quadrangles. We use a recursive algorithm to obtain this decomposition, as explained in \app{algo_polygon}. We refer to \cite{witherden2015} for numerically accurate symmetric quadrature rules for triangles and quadrangles.

\subsection{Alternatives}
\label{subsect:alternatives}

There exist computationally cheaper numerical integration methods based on evaluations of the integrands at the vertices of the mesh. Indeed, using Stokes theorem one can transform surface integrals on polygons into line integrals on their edges, and in turn, into pointwise evaluations at their vertices \cite{chin2015}. This approach requires knowing the local expression of $\pi[u]$, that is the coefficients of the linear interpolation of $u$ on each cell.

We have conducted preliminary experiments using this approach but we have observed that in addition to being numerically unstable, the overall cost including the interpolation step is not lower. Indeed, finding the best-fitting plane that passes through given points involves the inversion of a $3 \times 3$ system on each cell. The coefficients of these matrices are the integrals over each cell of the polynomials $x^2$, $xy$, $y^2$, $x$, $y$ and $1$. In many cases, the cells take skewed shapes so these matrices can be extremely ill-conditioned.

\subsection{Analysis of the integration error}

Our proposed adaptive quadrature consists in approximating $\mathcal{J}(u)$ by a quadrature on the cells of the mesh adapted to $\pi[u]$. We insist that our proposed \ac{aq} relies on the evaluation of $u$ and not $\pi[u]$. We now show how to bound the integration error of a bilinear form (the integration error of a linear form is bounded in a similar way). Let $\mathcal{I}, \mathcal{Q}: U \times U \to \RR$ be two bilinear forms. Here $\mathcal{Q}$ denotes the approximation of $\mathcal{I}$ by a numerical quadrature. We suppose that $u$ and $\pi[u]$ belong to $U$ and decompose the integration error into
\begin{align*}
    \abs{\mathcal{I}(u, u) - \mathcal{Q}(u, u)} & \leq \abs{\mathcal{I}(u - \pi[u], u)} + \abs{\mathcal{I}(\pi[u], u - \pi[u])} \\
                                                & + \abs{\mathcal{I}(\pi[u], \pi[u]) - \mathcal{Q}(\pi[u], \pi[u])}             \\
                                                & + \abs{\mathcal{Q}(u - \pi[u], \pi[u])} + \abs{\mathcal{Q}(u, u - \pi[u])}.
\end{align*}
The term $\abs{\mathcal{I}(\pi[u], \pi[u]) - \mathcal{Q}(\pi[u], \pi[u])}$ is the error incurred by any piecewise numerical integration method (e.g. it is the standard integration error in the FEM). In particular, one can consider piecewise polynomial approximations of physical parameters, forcing term, and boundary conditions up to a given order and use a Gaussian quadrature that cancels the numerical quadrature error of the bilinear form. We neglect this term in the remainder of this section. We further assume that $\mathcal{I}$ and $\mathcal{Q}$ are bounded: there exist norms $\aabs{\cdot}_{\mathcal{I}, 1}$, $\aabs{\cdot}_{\mathcal{I}, 2}$ on $U$ and a constant $C_{\mathcal{I}} > 0$ such that for all $(u, v) \in U \times U$, it holds $\abs{\mathcal{I}(u, v)} \leq C_{\mathcal{I}} \aabs{u}_{\mathcal{I}, 1} \aabs{v}_{\mathcal{I}, 2}$ and we adopt similar notations for $\mathcal{Q}$. It follows that
\begin{align*}
    \abs{\mathcal{I}(u, u) - \mathcal{Q}(u, u)} & \leq C_{\mathcal{I}} (\aabs{u - \pi[u]}_{\mathcal{I}, 1} \aabs{u}_{\mathcal{I}, 2} + \aabs{\pi[u]}_{\mathcal{I}, 1} \aabs{u - \pi[u]}_{\mathcal{I}, 2}) \\
                                                & + C_{\mathcal{Q}} (\aabs{u - \pi[u]}_{\mathcal{Q}, 1} \aabs{u}_{\mathcal{Q}, 2} + \aabs{\pi[u]}_{\mathcal{Q}, 1} \aabs{u - \pi[u]}_{\mathcal{Q}, 2}).
\end{align*}

We consider the two networks $u = \mathcal{N}(\bm{\vartheta}, \rho)$ and $\pi_{n, p}[u] = \mathcal{N}(\bm{\vartheta}, \pi_{n, p}[\rho])$, that only differ by their activation function. We show that we can bound $u - \pi_{n, p}[u]$ in terms of $\rho - \pi_{n, p}[\rho]$ in the following proposition.

\begin{proposition}[Distance between two neural networks]
    \label{prop:nns}
    Let $\Omega \subset \RR^d$ be a bounded domain, $\rho$ and $\sigma$ two continuous and almost everywhere differentiable functions such that $\rho - \sigma$, $\rho'$ and $\sigma'$ are bounded on $\RR$. We further assume that $\rho$ and $\rho'$ are Lipschitz continuous on $\RR$. Let $u_\rho = \mathcal{N}(\bm{\vartheta}, \rho)$ and $u_\sigma = \mathcal{N}(\bm{\vartheta}, \sigma)$ two \acp{nn} that only differ by their activation functions. There exist three constants $C_1$, $C_2$, $C_3$ such that
    \begin{align}
        \aabs{u_\rho - u_\sigma}_{L^\infty(\Omega)}                      & \leq C_1 \aabs{\rho - \sigma}_{L^\infty(\RR)},                                     \label{eq:nns_1}     \\
        \aabs{\nabla u_\rho - \nabla u_\sigma}_{L^\infty(\Omega, \RR^d)} & \leq C_2 \aabs{\rho - \sigma}_{L^\infty(\RR)} + C_3 \aabs{\rho' - \sigma'}_{L^\infty(\RR)}. \label{eq:nns_2}
    \end{align}
    These three constants depend on $\bm{\vartheta}$, $\aabs{\rho'}_{L^\infty(\RR)}$, $\aabs{\sigma'}_{L^\infty(\RR)}$, and the Lipschitz constants of $\rho$ and $\rho'$.
\end{proposition}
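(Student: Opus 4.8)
The plan is to proceed by induction on the depth $L$, tracking how the discrepancy between the two activations propagates layer by layer. I would first introduce notation for the partial realisations: let $\bm{z}^{(k)}_\rho$ and $\bm{z}^{(k)}_\sigma$ denote the pre-activations at layer $k$ (the output of $\bm{\Theta}_k$ applied to the previous post-activation) for the two networks, and $\bm{a}^{(k)}_\rho = \rho(\bm{z}^{(k)}_\rho)$, $\bm{a}^{(k)}_\sigma = \sigma(\bm{z}^{(k)}_\sigma)$ the corresponding post-activations, all regarded as functions of $\bm{x} \in \Omega$ and measured in the sup norm $\aabs{\cdot}_\infty$ over $\Omega$. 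Since the two networks share the same weights, the first-layer pre-activations coincide, $\bm{z}^{(1)}_\rho = \bm{z}^{(1)}_\sigma = \bm{W}_1 \bm{x} + \bm{b}_1$, so the sole source of error at the base of the induction is the activation itself.

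For \eqref{eq:nns_1}, the key recursion is obtained by inserting $\rho(\bm{z}^{(k)}_\sigma)$ and splitting the post-activation error into a term where the same activation is evaluated at two different points and a term where two activations are evaluated at the same point:
\[
    \aabs{\bm{a}^{(k)}_\rho - \bm{a}^{(k)}_\sigma}_\infty \leq \lip(\rho)\, \aabs{\bm{z}^{(k)}_\rho - \bm{z}^{(k)}_\sigma}_\infty + \aabs{\rho - \sigma}_{L^\infty(\RR)}.
\]
Note that this split puts the Lipschitz estimate on $\rho$ and evaluates $\sigma$ only at the single point $\bm{z}^{(k)}_\sigma$, which is precisely why one needs $\rho$ (rather than $\sigma$) to be Lipschitz. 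Combining this with $\bm{z}^{(k+1)}_\rho - \bm{z}^{(k+1)}_\sigma = \bm{W}_{k+1}(\bm{a}^{(k)}_\rho - \bm{a}^{(k)}_\sigma)$ and the operator-norm bound on $\bm{W}_{k+1}$, the induction unrolls to a geometric-type sum whose value is a constant $C_1$ depending only on $\lip(\rho)$ and the operator norms of the weight matrices. I would record along the way the intermediate estimate $\aabs{\bm{z}^{(k)}_\rho - \bm{z}^{(k)}_\sigma}_\infty \leq C \aabs{\rho - \sigma}_{L^\infty(\RR)}$ for every layer, as this pre-activation bound is exactly what feeds the gradient estimate.

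For \eqref{eq:nns_2}, I would apply the chain rule to write each gradient as a product of weight matrices interleaved with diagonal matrices of activation derivatives,
\[
    \nabla u_\rho = \bm{W}_L \prod_{k=L-1}^{1} \operatorname{diag}\!\left(\rho'(\bm{z}^{(k)}_\rho)\right) \bm{W}_k,
\]
and similarly for $u_\sigma$ with $\sigma'$ and $\bm{z}^{(k)}_\sigma$. The difference of two such matrix products is handled by a telescoping decomposition in which one factor is swapped at a time; each resulting summand contains exactly one factor $\operatorname{diag}(\rho'(\bm{z}^{(k)}_\rho)) - \operatorname{diag}(\sigma'(\bm{z}^{(k)}_\sigma))$, the remaining factors being bounded by the operator norms of the weights and by $\aabs{\rho'}_{L^\infty(\RR)}$ or $\aabs{\sigma'}_{L^\infty(\RR)}$. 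Splitting that single difference once more as
\[
    \aabs{\rho'(\bm{z}^{(k)}_\rho) - \sigma'(\bm{z}^{(k)}_\sigma)}_\infty \leq \lip(\rho')\, \aabs{\bm{z}^{(k)}_\rho - \bm{z}^{(k)}_\sigma}_\infty + \aabs{\rho' - \sigma'}_{L^\infty(\RR)},
\]
and inserting the pre-activation estimate from the first part, the first summand produces the $\aabs{\rho - \sigma}_{L^\infty(\RR)}$ contribution (constant $C_2$) while the second produces the $\aabs{\rho' - \sigma'}_{L^\infty(\RR)}$ contribution (constant $C_3$).

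I expect the main obstacle to be the bookkeeping in the telescoping product for the gradient: one must verify that every factor other than the swapped one is uniformly bounded — which is exactly where the hypotheses that $\rho'$ and $\sigma'$ are bounded on $\RR$ enter — and that the single difference factor is always controlled by the two desired quantities through the Lipschitz continuity of $\rho'$. Boundedness of $\Omega$ ensures the first-layer pre-activations, and hence all subsequent sup norms, are taken over a well-defined set, while the remaining estimates are uniform in $\bm{x}$. The explicit dependence of $C_1, C_2, C_3$ on $\bm{\vartheta}$, $\aabs{\rho'}_{L^\infty(\RR)}$, $\aabs{\sigma'}_{L^\infty(\RR)}$, and the Lipschitz constants of $\rho$ and $\rho'$ then falls out of unrolling the two recursions.
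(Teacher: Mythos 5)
Your proof is correct and follows essentially the same route as the paper: an induction over the layers with the same two key splits, namely the Lipschitz continuity of $\rho$ to compare the values, and the Lipschitz continuity of $\rho'$ together with the boundedness of $\rho'$, $\sigma'$ and of $\nabla u_\sigma$ to compare the gradients. The only cosmetic difference is that you unroll the gradient estimate into an explicit telescoping product of Jacobian factors, whereas the paper keeps a layer-by-layer recursion that carries the bound on $\nabla u_\sigma$ as a third induction hypothesis; the resulting decomposition and constants are the same.
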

\begin{proof}
    See \ref{proof:prop:nns}.
\end{proof}

\subsubsection{Bounding zero- and first-order derivatives}

When we consider the weak formulation of the Poisson problem, both norms for $\mathcal{I}$ are $H^1$ norms, and those for $\mathcal{Q}$ are $W^{1, \infty}$ norms. \prop{cpwl} and \prop{nns} can be combined to bound $L^q$- and $W^{1, q}$-like norms of $u - \pi_{n, p}[u]$

For example, let $\alpha > 1/2$, $\rho \in \mathcal{A}_\alpha$, $p = 2 > 1/\alpha$ and $n \geq \kappa(\rho)$. We know that $\rho$ and $\pi_{n, 2}[\rho]$ are continuous and almost everywhere differentiable and their derivatives are bounded. Furthermore \prop{nns} showed that $\rho - \pi_{n, 2}[\rho]$ is bounded. Since $\rho'$ and $\rho''$ are bounded, we infer that $\rho$ and $\rho'$ are Lipschitz continuous. We use the fact that if $f$ is bounded, then it holds $\aabs{f}_{L^2(\Omega)} \leq \aabs{f}_{L^\infty(\Omega)} \card{\Omega}^{1/2}$. Combining \prop{cpwl} and \prop{nns}, we show the bounds
\begin{align*}
    \aabs{u - \pi_{n, 2}[u]}_{L^2(\Omega)} & \lesssim n^{-2}, & \aabs{\nabla u - \nabla \pi_{n, 2}[u]}_{L^2(\Omega)} & \lesssim n^{-1},
\end{align*}
where the term $n^{-2}$ has been neglected in the second bound. Altogether, we conclude that $\aabs{u - \pi_{n, 2}[u]}_{H^1(\Omega)}$ decays as $n^{-1}$.

\subsubsection{Bounding higher-order derivatives}

We now turn to the strong formulation of the Poisson problem. Since we use smooth activation functions, the strong formulation of the Poisson problem is well-posed for $u$. We reiterate that our \ac{aq} is performed on $u$ directly, not on $\pi_{n, p}[u]$. In this case, the norms of $\mathcal{I}$ and $\mathcal{Q}$ are $H^2$ and $W^{2, \infty}$ respectively. The decomposition of the integration error presented for the zero- and first-order derivatives cannot be adapted here because $\pi_{n, p}[\rho]$ is not smooth enough. To show similar integration bounds as above, one would need to approximate $\rho$ by a piecewise quadratic, globally $\mathcal{C}^1$ polynomial (e.g. a quadratic spline).

\section{Numerical experiments}
\label{sect:experiments}
We carry out thorough numerical experiments to validate our integration method. Choosing the best hyperparameters to train a \ac{nn} usually requires trying several combinations and picking the one that leads to the lowest generalisation error. In addition to the usual learning rate and the number of epochs, we also need to tune the design of the loss function by specifying the penalty coefficient $\beta$, and the integration method. In the case of \ac{mc} integration, we select the number of points in the domain and on the boundary, as well as the resampling frequency. Our adaptive quadrature introduces a new set of hyperparameters, namely the number of pieces in the \ac{cpwl} approximation of the activation function, the order of the quadrature in each cell as well as the remeshing frequency.

In this work, we are also concerned with reducing the computational budget involved with the training of a \ac{nn} to solve a \ac{pde}. For this reason, we consider a two-hidden layer feed-forward \ac{nn} with $10$ neurons on both layers. The number of trainable parameters is $141$ and $151$ in dimensions one and two respectively. The optimiser is chosen as ADAM and all the networks are trained for $5000$ epochs. To compensate for the low number of training iterations, we use a slightly higher learning rate compared to common practice in the literature. We set it to $10^{-2}$ or $10^{-3}$, whereas it seems to usually lie between $10^{-4}$ and $10^{-3}$. We specify the learning rate for each experiment in the sections below.

In our experiments, we compare models that have been trained with the same average number of integration points. In the case of a network trained with \ac{mc}, the number of integration points is fixed during training, but this is not the case for the method we propose here. In practice, we observe that the number of points starts at an intermediate value, then slightly decreases, and finally rises above the initial value. We still take the average number of integration points across the whole training as a reference because it is the average number of times the model is evaluated at each epoch and is a good indicator of the cost of a numerical method.

We compare \ac{aq} against \ac{mc} integration to solve the Poisson equation in dimensions one and two, both in the strong and weak form using the Nitsche formulation. We report the relative $L^2$ norm of the pointwise difference, defined as
\[E(u, \hat{u}) = \frac{\aabs{\hat{u} - u}_{L^2(\Omega)}}{\aabs{\hat{u}}_{L^2(\Omega)}}.\]
The loss function corresponding to each problem is introduced in \subsect{study_cases} and the manufactured solutions we consider are presented in \subsect{solutions}. We give an outline of our experiments and short conclusions for each of them.
\begin{itemize}
    \item We conduct a set of exploratory experiments in \subsect{exploration} to identify the best learning rate and sampling frequency, as well as to evaluate and compare the robustness of \ac{aq} and \ac{mc} to the number of integration points. We find that regardless of the integration method, the strong Poisson problem brings about higher levels of noise during the training phase compared to the weak Poisson formulation. In general, \ac{aq} can reach lower errors than \ac{mc} and reduce this noise to a certain extent while using fewer integration points. This is especially true for the weak Poisson problem, where the convergence is noticeably faster and smoother with \ac{aq}.
    \item In \subsect{initialisation}, we show that models trained with our proposed integration method are more robust to parameter initialisation compared to \ac{mc}. Furthermore, \ac{aq} consistently leads to lower errors than \ac{mc} even when it relies on fewer integration points.
    \item The next round of experiments in \subsect{reduction} shows that it is possible to reduce the number of integration points by merging small regions with their neighbours.
    \item Finally in \subsect{rhombi} we solve a Poisson equation on a slightly more complex domain. We find that for a similar number of integration points, \ac{aq} reduces the error of \ac{mc} by $70\%$.
\end{itemize}
We provide a summary of our experiments in \subsect{summary} and discuss limitations and possible extensions of our method in \subsect{discussion}.

In all the following tables, $N_\Omega$ and $N_\Gamma$ stand for the number of integration points in the domain and on the boundary, respectively. The letters $P$ and $O$ denote the number of pieces in the approximation of the activation function by a \ac{cpwl} function, and the order of the quadrature in each cell of the mesh adapted to the network.

\subsection{Study cases}
\label{subsect:study_cases}

In the following experiments, we consider two simple domains: the segment $\Omega_1 = \cc{-1}{+1}$ and the square $\Omega_2 = \cc{-1}{+1}^2$. We weakly enforce Dirichlet boundary conditions on the whole boundary of the domain. We introduce the bilinear and linear forms corresponding to each problem below.

\begin{description}
    \item[Strong formulation] Let $f: \Omega \to \RR$ and $g: \Gamma \to \RR$ be two continuous functions. We want to solve the equation $-\Delta u = f$ with the Dirichlet boundary condition $u_{\vert \Gamma} = g$. The original \ac{pinn} formulation corresponds to the strong form of the \ac{pde} and is associated with the loss
        \[J(u) = \frac{1}{2} \|\Delta u + f\|_\Omega^2 + \frac{\beta}{2} \|u - g\|_\Gamma^2,\]
        where $\beta > 0$ is a weight factor for the boundary condition. We transform the squared $L^2$ norms into inner products and obtain the following bilinear and linear forms defined on $U = V = H^2(\Omega)$
        \begin{align*}
            a(u, v) & = \scal{\Delta u}{\Delta v}_\Omega + \beta \scal{u}{v}_\Gamma, \\
            \ell(v) & = -\scal{f}{\Delta v}_\Omega + \beta \scal{g}{v}_\Gamma.
        \end{align*}
    \item[Weak formulation] We also solve a weak form of the Poisson equation and use the Nitsche method to obtain a symmetric bilinear form. In this setting, we only require $U = V = H^1(\Omega)$, $f \in H^{-1}(\Omega)$ and $g \in H^{-1/2}(\Gamma)$. The bilinear and linear forms are as follows
        \begin{align*}
            a(u, v) & = \scal{\nabla u}{\nabla v}_\Omega - \scal{\nabla u \cdot n}{v}_\Gamma - \scal{u}{\nabla v \cdot n}_\Gamma + \beta \scal{u}{v}_\Gamma, \\
            \ell(v) & = \scal{f}{v}_\Omega - \scal{g}{\nabla v \cdot n}_\Gamma + \beta \scal{g}{v}_\Gamma.
        \end{align*}
        Here, $n$ is the outward-pointing unit normal vector to $\Gamma$ and $\beta > 0$ is a coefficient large enough so that the bilinear form is coercive.
\end{description}

In all our experiments, the relative $L^2$ norm is evaluated with a quadrature of order $10$ on a $100$-cell uniform mesh in 1D and on a $100 \times 100$ uniform grid in 2D.

\subsection{Activation functions and forcing terms}
\label{subsect:solutions}

We solve the two problems we have just described with different forcing terms $f$. Throughout our experiments, all the problems that involve a given forcing term are solved using the same activation function. We report two groups of forcing terms and their corresponding activation in \tab{activations}. The notation $\sinc$ stands for the cardinal sine function $x \mapsto \sin(x)/x$, defined on $\RR \backslash \{0\}$ and continuously extended at zero by setting $\sinc(0) = 1$.

\begin{table}
    \centering
    \resizebox{0.5\linewidth}{!}{%
        \begin{tabular}{ccc}
            \toprule
            \multirow{2}{*}{Activation} & \multicolumn{2}{c}{Forcing term}                                   \\
            \cmidrule(l){2-3}
                                        & 1D                               & 2D                              \\
            \midrule
            $\abse$                     & $\sinc(3 \pi x)$                 & $\sinc(2 \pi x) \sinc(2 \pi y)$ \\
            $\tanh$                     & $\tanh(10 (x^2 - 0.5^2))$        & $\tanh(10 (x^2 + y^2 - 0.5^2))$ \\
            \bottomrule
        \end{tabular}
    }
    \caption{Forcing terms in dimensions one and two with their corresponding activation function.}
    \label{tab:activations}
\end{table}

\subsection{Relationship between integration method, learning rate, and sampling frequency}
\label{subsect:exploration}

In this initial experiment, we focus on solving the four one-dimensional problems ($\abse$/$\tanh$, weak/strong) so that exploring a large space of hyperparameters remains computationally affordable. All the networks are trained from the same seed, which means that they are initialised from the same set of weights and biases, and the \ac{mc} integration points are sampled in the same way across all experiments.

We study the connection between the learning rate, the frequency at which we resample the integration points, and the integration method. We set the learning rate to $10^{-2}$ or $10^{-3}$ and resample the integration points every $1$, $10$, $100$, $500$, $1000$ or $5000$ epochs. Since the networks are trained for $5000$ epochs, the last frequency corresponds to fixed integration points. We compare \ac{mc} with $50$ or $100$ points against \ac{aq} with several choices of quadrature order and number of pieces in $\pi[\rho]$. When the domain is one-dimensional, the boundary integrals reduce to evaluating the integrand at $\Gamma_1 = \{-1, +1\}$.

We also solve the weak Poisson problem in dimension two to further assess the effect of the integration method when the sampling frequency is fixed.

\subsubsection{Strong Poisson in 1D}

The relative $L^2$ norm after training the network for the two one-dimensional strong Poisson problems is shown in \tab{1D_pois_abs} and \tab{1D_pois_tanh}. We first observe that the sampling frequency and the number of integration points have a large influence on the final error. It is unclear whether increasing the number of integration points helps improve performance, as both trends appear for the two activation functions and integration methods.

Generally speaking, we note that \ac{aq} works best with high refresh rates (every $100$ or fewer epochs), especially when the number of pieces and the order of the quadrature are low. However, it is almost always the case that \ac{aq} reaches similar or better accuracies than \ac{mc} with fewer points. For instance, when the resampling rate is set to $10$ epochs and the learning rate to $10^{-2}$ for the $\abse$ activation function, a quadrature of order $2$ with $3$ pieces involves $57$ points on average and leads to a final error of $6.25 \times 10^{-3}$ while all of the \ac{mc} settings bring to higher errors. We notice that using a higher-order quadrature or more pieces in the \ac{cpwl} approximation of the activation function reduces the error more consistently than increasing the number of \ac{mc} points, even though it is not systematic in either case.

To illustrate the learning phase, we plot the learning curve and final pointwise error in two different settings in \fig{1_l2rel} and \fig{2_l2rel}. We remark that the levels of noise in the two scenarios are very different. In the first case, the convergence with \ac{aq} is less perturbed by noise than that with \ac{mc}, and the error seems to decay faster with \ac{aq} than with \ac{mc}. In the second case, the training suffers from high levels of noise for all integration settings, and this is a representative example of most of the networks solving the strong Poisson problem.

To summarise, the strong Poisson problem brings about high levels of noise during the training phase for both integration methods. Still, \ac{aq} can reach lower errors than \ac{mc} and reduce this noise to a certain extent while using fewer integration points.

\begin{table}
    \centering
    \resizebox{0.8\linewidth}{!}{%
        \begin{tabular}{ccccccccc}
            \toprule
            \multirow{2}{*}{$\eta$}     &                          & \multirow{2}{*}{$N_\Omega$ ($P$, $O$)} & \multicolumn{6}{c}{$\nu$}                                                                                                                         \\
            \cmidrule(l){4-9}
                                        &                          &                                        & $1$                       & $10$                  & $100$                 & $500$                 & $1000$                & $5000$                \\
            \midrule
            \multirow{12}{*}{$10^{-2}$} & \multirow{3}{*}{\ac{mc}} & $50$                                   & $2.65 \times 10^{-2}$     & $3.50 \times 10^{-2}$ & $6.81 \times 10^{-2}$ & $4.01 \times 10^{-1}$ & $1.76 \times 10^{-1}$ & $5.79 \times 10^{-2}$ \\
                                        &                          & $100$                                  & $2.97 \times 10^{-2}$     & $7.99 \times 10^{-2}$ & $3.18 \times 10^{-2}$ & $1.48 \times 10^{-2}$ & $4.24 \times 10^{-2}$ & $2.51 \times 10^{-1}$ \\
            \cmidrule(l){3-9}
                                        & \multirow{5}{*}{\ac{aq}} & $25$ ($2$, $2$)                        & $1.11 \times 10^{-1}$     & $1.05 \times 10^{-1}$ & $3.00 \times 10^{-2}$ & $2.39 \times 10^{-2}$ & $4.47 \times 10^{-2}$ & $1.97 \times 10^{-0}$ \\
                                        &                          & $42$ ($2$, $5$)                        & $5.13 \times 10^{-2}$     & $3.18 \times 10^{-2}$ & $9.43 \times 10^{-2}$ & $6.39 \times 10^{-2}$ & $1.52 \times 10^{-2}$ & $1.62 \times 10^{-0}$ \\
                                        &                          & $75$ ($2$, $10$)                       & $1.84 \times 10^{-3}$     & $2.26 \times 10^{-2}$ & $1.22 \times 10^{-2}$ & $3.83 \times 10^{-2}$ & $7.25 \times 10^{-3}$ & $4.32 \times 10^{-1}$ \\
                                        &                          & $57$ ($3$, $2$)                        & $2.55 \times 10^{-2}$     & $6.25 \times 10^{-3}$ & $1.70 \times 10^{-1}$ & $2.60 \times 10^{-1}$ & $2.46 \times 10^{-1}$ & $3.04 \times 10^{-2}$ \\
                                        &                          & $84$ ($3$, $5$)                        & $1.08 \times 10^{-2}$     & $3.81 \times 10^{-2}$ & $1.32 \times 10^{-2}$ & $1.76 \times 10^{-3}$ & $3.76 \times 10^{-2}$ & $2.65 \times 10^{-2}$ \\
                                        &                          & $86$ ($5$, $2$)                        & $7.56 \times 10^{-2}$     & $6.18 \times 10^{-3}$ & $6.66 \times 10^{-3}$ & $3.58 \times 10^{-2}$ & $1.04 \times 10^{-2}$ & $1.09 \times 10^{-0}$ \\
            \midrule
            \multirow{12}{*}{$10^{-3}$} & \multirow{3}{*}{\ac{mc}} & $50$                                   & $4.67 \times 10^{-2}$     & $1.15 \times 10^{-1}$ & $1.10 \times 10^{-1}$ & $1.06 \times 10^{-1}$ & $4.40 \times 10^{-1}$ & $5.77 \times 10^{-1}$ \\
                                        &                          & $100$                                  & $4.29 \times 10^{-2}$     & $1.17 \times 10^{-1}$ & $4.03 \times 10^{-2}$ & $3.18 \times 10^{-2}$ & $6.50 \times 10^{-3}$ & $1.44 \times 10^{-1}$ \\
            \cmidrule(l){3-9}
                                        & \multirow{5}{*}{\ac{aq}} & $28$ ($2$, $2$)                        & $1.91 \times 10^{-2}$     & $4.64 \times 10^{-2}$ & $6.49 \times 10^{-2}$ & $4.35 \times 10^{-2}$ & $7.09 \times 10^{-2}$ & $2.41 \times 10^{-0}$ \\
                                        &                          & $44$ ($2$, $5$)                        & $1.80 \times 10^{-2}$     & $6.67 \times 10^{-3}$ & $1.88 \times 10^{-2}$ & $2.49 \times 10^{-2}$ & $2.81 \times 10^{-2}$ & $1.85 \times 10^{-0}$ \\
                                        &                          & $78$ ($2$, $10$)                       & $1.01 \times 10^{-2}$     & $6.75 \times 10^{-3}$ & $2.23 \times 10^{-2}$ & $5.03 \times 10^{-3}$ & $5.74 \times 10^{-3}$ & $1.26 \times 10^{-0}$ \\
                                        &                          & $59$ ($3$, $2$)                        & $3.87 \times 10^{-2}$     & $3.84 \times 10^{-2}$ & $1.32 \times 10^{-2}$ & $7.94 \times 10^{-2}$ & $1.71 \times 10^{-1}$ & $1.33 \times 10^{-2}$ \\
                                        &                          & $86$ ($3$, $5$)                        & $1.40 \times 10^{-2}$     & $3.87 \times 10^{-2}$ & $1.61 \times 10^{-2}$ & $9.66 \times 10^{-3}$ & $6.76 \times 10^{-3}$ & $7.50 \times 10^{-3}$ \\
                                        &                          & $89$ ($5$, $2$)                        & $6.32 \times 10^{-3}$     & $1.63 \times 10^{-2}$ & $3.50 \times 10^{-2}$ & $6.45 \times 10^{-3}$ & $3.03 \times 10^{-2}$ & $1.38 \times 10^{-2}$ \\
            \bottomrule
        \end{tabular}
    }
    \caption{Comparison of the relative $L^2$ norm for the strong Poisson problem with the $\abse$ activation, depending on learning rate, resampling frequency, and integration method.}
    \label{tab:1D_pois_abs}
\end{table}

\begin{table}
    \centering
    \resizebox{0.8\linewidth}{!}{%
        \begin{tabular}{ccccccccc}
            \toprule
            \multirow{2}{*}{$\eta$}     &                          & \multirow{2}{*}{$N_\Omega$ ($P$, $O$)} & \multicolumn{6}{c}{$\nu$}                                                                                                                         \\
            \cmidrule(l){4-9}
                                        &                          &                                        & $1$                       & $10$                  & $100$                 & $500$                 & $1000$                & $5000$                \\
            \midrule
            \multirow{12}{*}{$10^{-2}$} & \multirow{3}{*}{\ac{mc}} & $50$                                   & $2.62 \times 10^{-2}$     & $2.81 \times 10^{-2}$ & $7.34 \times 10^{-3}$ & $5.55 \times 10^{-2}$ & $2.75 \times 10^{-2}$ & $4.61 \times 10^{-1}$ \\
                                        &                          & $100$                                  & $2.96 \times 10^{-2}$     & $1.17 \times 10^{-2}$ & $2.81 \times 10^{-2}$ & $1.72 \times 10^{-3}$ & $9.02 \times 10^{-3}$ & $4.71 \times 10^{-2}$ \\
            \cmidrule(l){3-9}
                                        & \multirow{5}{*}{\ac{aq}} & $38$ ($3$, $2$)                        & $2.86 \times 10^{-2}$     & $6.26 \times 10^{-3}$ & $5.95 \times 10^{-2}$ & $1.83 \times 10^{-2}$ & $4.11 \times 10^{-2}$ & $7.62 \times 10^{-0}$ \\
                                        &                          & $57$ ($3$, $5$)                        & $3.13 \times 10^{-3}$     & $8.46 \times 10^{-3}$ & $2.65 \times 10^{-3}$ & $2.19 \times 10^{-3}$ & $1.37 \times 10^{-3}$ & $1.54 \times 10^{-0}$ \\
                                        &                          & $118$ ($3$, $10$)                      & $1.73 \times 10^{-3}$     & $1.54 \times 10^{-4}$ & $1.74 \times 10^{-3}$ & $1.41 \times 10^{-2}$ & $6.11 \times 10^{-2}$ & $2.92 \times 10^{-0}$ \\
                                        &                          & $76$ ($5$, $2$)                        & $8.55 \times 10^{-4}$     & $2.06 \times 10^{-3}$ & $7.29 \times 10^{-3}$ & $1.72 \times 10^{-2}$ & $5.46 \times 10^{-3}$ & $5.06 \times 10^{-1}$ \\
                                        &                          & $118$ ($5$, $5$)                       & $4.62 \times 10^{-3}$     & $2.83 \times 10^{-3}$ & $4.58 \times 10^{-4}$ & $1.77 \times 10^{-3}$ & $4.02 \times 10^{-3}$ & $3.39 \times 10^{-1}$ \\
                                        &                          & $112$ ($7$, $2$)                       & $6.35 \times 10^{-3}$     & $2.97 \times 10^{-4}$ & $7.16 \times 10^{-3}$ & $4.23 \times 10^{-3}$ & $2.47 \times 10^{-2}$ & $3.97 \times 10^{-1}$ \\
            \midrule
            \multirow{12}{*}{$10^{-3}$} & \multirow{3}{*}{\ac{mc}} & $50$                                   & $8.51 \times 10^{-2}$     & $3.04 \times 10^{-2}$ & $9.91 \times 10^{-3}$ & $1.36 \times 10^{-1}$ & $2.62 \times 10^{-2}$ & $8.61 \times 10^{-2}$ \\
                                        &                          & $100$                                  & $5.87 \times 10^{-2}$     & $6.39 \times 10^{-2}$ & $5.65 \times 10^{-2}$ & $7.68 \times 10^{-3}$ & $1.79 \times 10^{-2}$ & $9.28 \times 10^{-2}$ \\
            \cmidrule(l){3-9}
                                        & \multirow{5}{*}{\ac{aq}} & $31$ ($3$, $2$)                        & $1.73 \times 10^{-0}$     & $8.18 \times 10^{-2}$ & $1.49 \times 10^{-2}$ & $1.64 \times 10^{-1}$ & $4.34 \times 10^{-0}$ & $7.94 \times 10^{-0}$ \\
                                        &                          & $49$ ($3$, $5$)                        & $3.42 \times 10^{-2}$     & $2.51 \times 10^{-2}$ & $2.82 \times 10^{-2}$ & $2.70 \times 10^{-2}$ & $2.23 \times 10^{-2}$ & $1.66 \times 10^{-0}$ \\
                                        &                          & $113$ ($3$, $10$)                      & $5.41 \times 10^{-3}$     & $1.57 \times 10^{-2}$ & $1.07 \times 10^{-2}$ & $5.37 \times 10^{-2}$ & $7.75 \times 10^{-3}$ & $1.56 \times 10^{-0}$ \\
                                        &                          & $75$ ($5$, $2$)                        & $2.83 \times 10^{-3}$     & $1.01 \times 10^{-2}$ & $8.73 \times 10^{-3}$ & $1.07 \times 10^{-3}$ & $4.03 \times 10^{-2}$ & $1.19 \times 10^{-0}$ \\
                                        &                          & $107$ ($5$, $5$)                       & $8.00 \times 10^{-3}$     & $9.14 \times 10^{-3}$ & $8.99 \times 10^{-3}$ & $4.01 \times 10^{-2}$ & $2.08 \times 10^{-3}$ & $1.04 \times 10^{-0}$ \\
                                        &                          & $107$ ($7$, $2$)                       & $1.09 \times 10^{-2}$     & $6.94 \times 10^{-3}$ & $4.83 \times 10^{-3}$ & $8.86 \times 10^{-3}$ & $8.29 \times 10^{-3}$ & $3.94 \times 10^{-1}$ \\
            \bottomrule
        \end{tabular}
    }
    \caption{Comparison of the relative $L^2$ norm for the strong Poisson problem with the $\tanh$ activation, depending on learning rate, resampling frequency, and integration method.}
    \label{tab:1D_pois_tanh}
\end{table}

\begin{figure}
    \centering
    \subfloat[Learning curve\label{fig:1_l2}]{
        \includegraphics[width=0.45\linewidth]{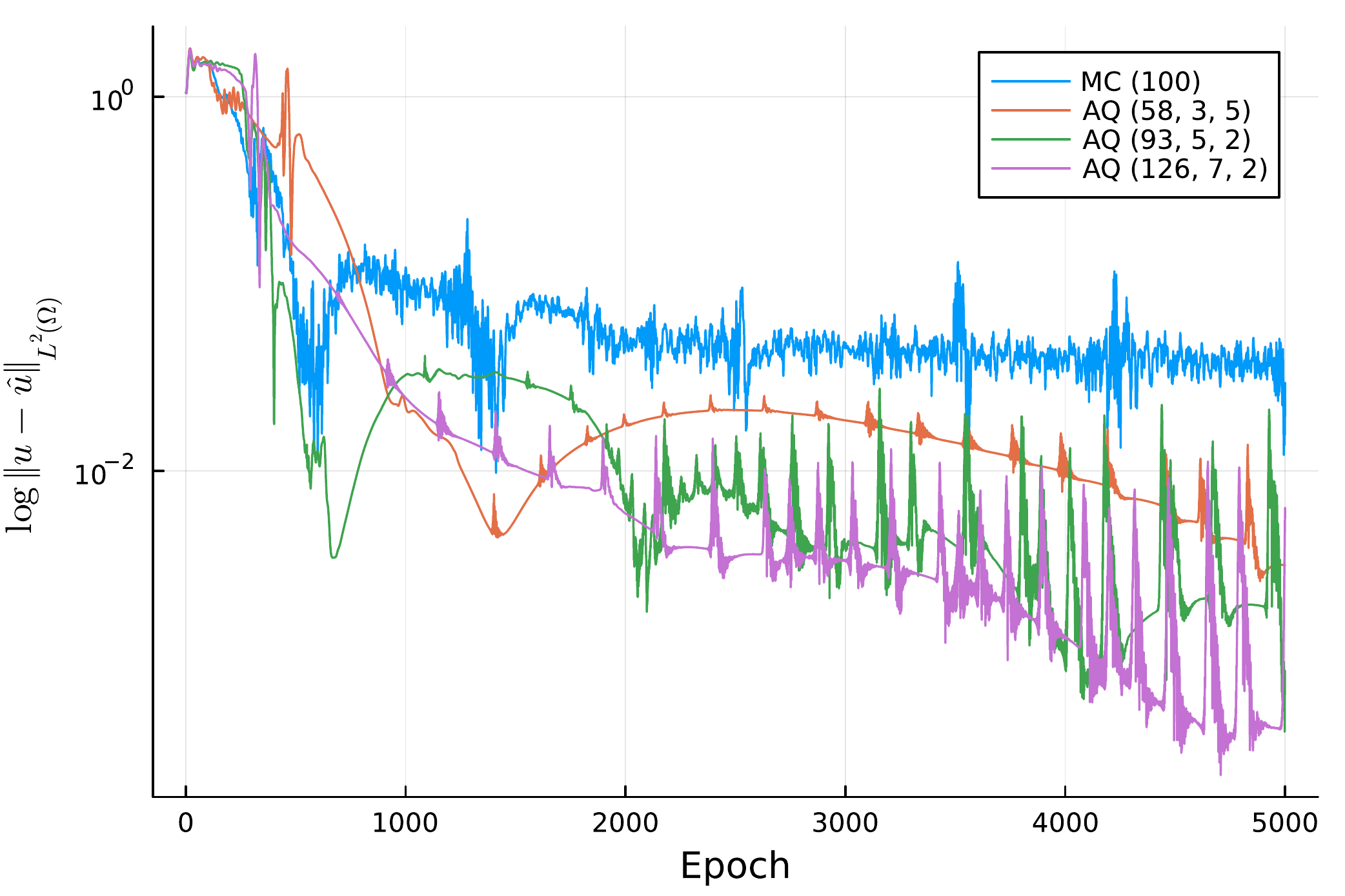}
    }
    \hfill
    \subfloat[Pointwise error\label{fig:1_rel}]{
        \includegraphics[width=0.45\linewidth]{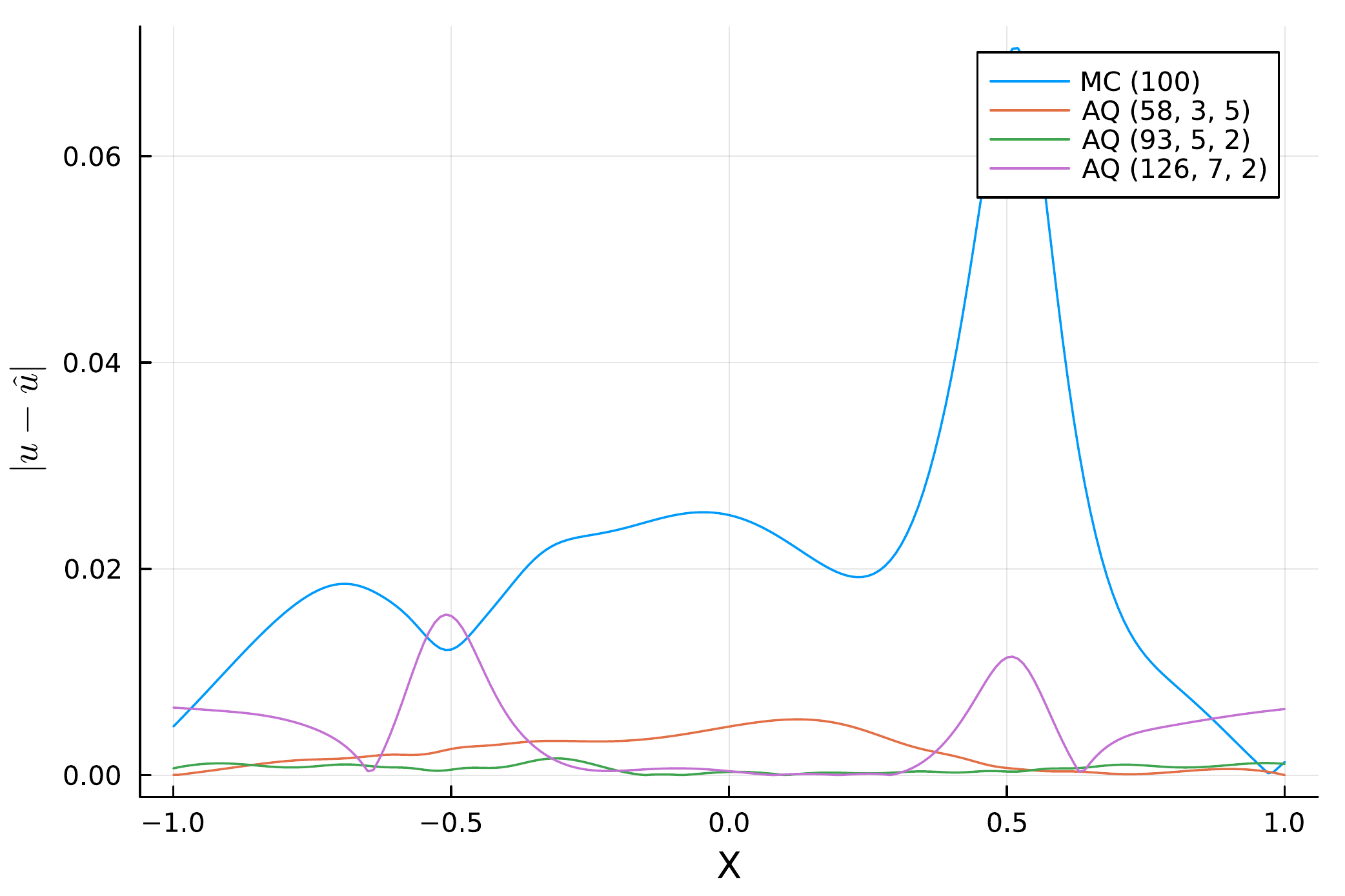}
    }
    \caption{Learning curve \protect\subref{fig:1_l2} and pointwise error \protect\subref{fig:1_rel} for the strong Poisson problem with the $\tanh$ activation. The learning rate is set to $10^{-2}$ and the points are resampled every $1$ epoch.}
    \label{fig:1_l2rel}
\end{figure}

\begin{figure}
    \centering
    \subfloat[Learning curve\label{fig:2_l2}]{
        \includegraphics[width=0.45\linewidth]{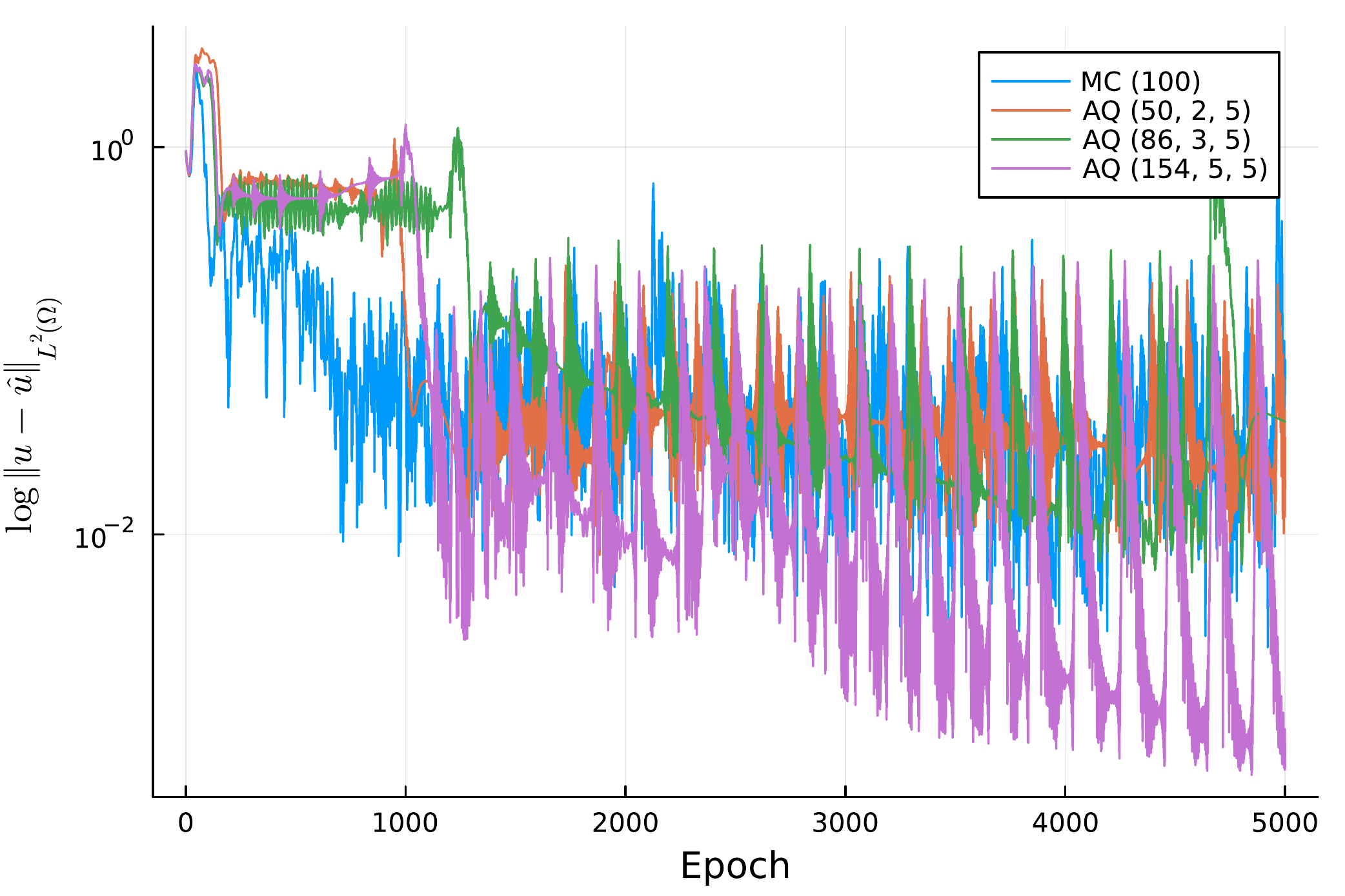}
    }
    \hfill
    \subfloat[Pointwise error\label{fig:2_rel}]{
        \includegraphics[width=0.45\linewidth]{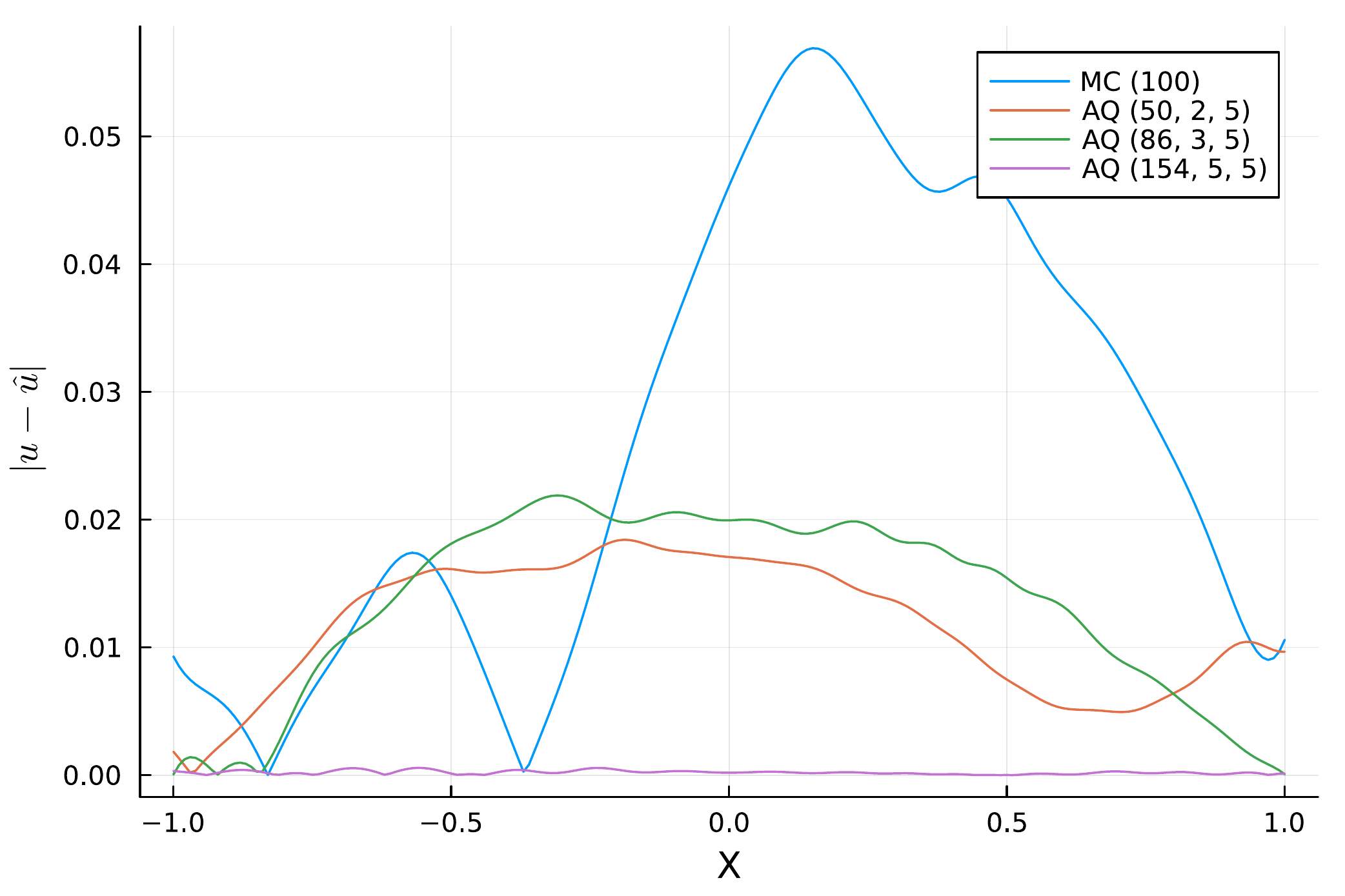}
    }
    \caption{Learning curve \protect\subref{fig:2_l2} and pointwise error \protect\subref{fig:2_rel} for the strong Poisson problem with the $\abse$ activation. The learning rate is $\eta = 10^{-2}$ and the points are resampled every $10$ epochs.}
    \label{fig:2_l2rel}
\end{figure}

\subsubsection{Weak Poisson in 1D}

We report the relative $L^2$ norm after training the network for the two one-dimensional weak Poisson problems in \tab{1D_poiw_abs} and \tab{1D_poiw_tanh}. We remark that the network does not converge to the expected solution when the integration points are not resampled (frequency of $5000$). Using more integration points alleviates this phenomenon, but we disregard this border case in the rest of this paragraph.

We observe that in many cases, and especially when the learning rate is set to $10^{-2}$, the error reached with \ac{aq} is one to two orders of magnitude lower than with \ac{mc}, even when \ac{aq} uses much fewer points than \ac{mc}. Provided that the number of integration points is sufficiently large, the final $L^2$ norm varies very little with the sampling frequency when the network is trained with \ac{aq}. When relying on \ac{mc}, the variance of the final error is much higher and the dependence between the variance and the number of integration points is unclear.

When the sampling frequency is high enough (every $100$ or fewer epochs), we observe that the number of integration points can very often be reduced while keeping the same levels of error. For instance, in the problem with the $\tanh$ function with a learning rate of $10^{-2}$ and a resampling frequency of $10$, shifting from $7$ pieces to $5$ or $3$ pieces while keeping order $2$ does not deteriorate the performance of \ac{aq}. However, reducing the number of \ac{mc} integration points increases the error in most cases.

To further compare the training phase with \ac{aq} and \ac{mc}, we plot the learning curve and pointwise error for the $\tanh$ activation in \fig{3_l2rel} and for the $\abse$ activation in \fig{4_l2rel}. In both cases, we remark that the convergence of the network is significantly smoother with \ac{aq} than with \ac{mc}, even with few integration points. Moreover, it is clear that the $L^2$ norm decreases in the case of \ac{aq}, whereas it seems to plateau with \ac{mc}. Finally, it is interesting to note that the learning curves follow the same trend at the beginning of the training. After a few hundred epochs, the learning curves corresponding to \ac{aq} keep on decreasing while the ones corresponding to \ac{mc} break away from this trend and start to suffer from high levels of noise.

In conclusion, solving the weak Poisson problem with \ac{aq} is computationally cheaper and shows a considerably faster and smoother convergence than with \ac{mc}.

\begin{table}
    \centering
    \resizebox{0.8\linewidth}{!}{%
        \begin{tabular}{ccccccccc}
            \toprule
            \multirow{2}{*}{$\eta$}     &                          & \multirow{2}{*}{$N_\Omega$ ($P$, $O$)} & \multicolumn{6}{c}{$\nu$}                                                                                                                         \\
            \cmidrule(l){4-9}
                                        &                          &                                        & $1$                       & $10$                  & $100$                 & $500$                 & $1000$                & $5000$                \\
            \midrule
            \multirow{12}{*}{$10^{-2}$} & \multirow{3}{*}{\ac{mc}} & $50$                                   & $4.82 \times 10^{-2}$     & $1.69 \times 10^{-1}$ & $1.41 \times 10^{-0}$ & $5.08 \times 10^{-0}$ & $8.96 \times 10^{-0}$ & $1.82 \times 10^{+3}$ \\
                                        &                          & $100$                                  & $4.29 \times 10^{-1}$     & $6.39 \times 10^{-1}$ & $1.68 \times 10^{-0}$ & $1.40 \times 10^{-0}$ & $2.11 \times 10^{+1}$ & $7.26 \times 10^{+3}$ \\
            \cmidrule(l){3-9}
                                        & \multirow{5}{*}{\ac{aq}} & $24$ ($2$, $2$)                        & $2.87 \times 10^{-2}$     & $2.05 \times 10^{-1}$ & $8.80 \times 10^{-2}$ & $1.14 \times 10^{-0}$ & $3.54 \times 10^{+2}$ & $4.43 \times 10^{+5}$ \\
                                        &                          & $43$ ($2$, $5$)                        & $1.04 \times 10^{-2}$     & $1.00 \times 10^{-2}$ & $4.90 \times 10^{-2}$ & $1.81 \times 10^{-0}$ & $2.04 \times 10^{+2}$ & $5.28 \times 10^{+5}$ \\
                                        &                          & $88$ ($2$, $10$)                       & $8.51 \times 10^{-3}$     & $8.24 \times 10^{-3}$ & $6.89 \times 10^{-2}$ & $2.05 \times 10^{-1}$ & $1.10 \times 10^{-0}$ & $3.11 \times 10^{+3}$ \\
                                        &                          & $61$ ($3$, $2$)                        & $8.12 \times 10^{-3}$     & $1.37 \times 10^{-2}$ & $1.59 \times 10^{-2}$ & $5.13 \times 10^{-2}$ & $4.83 \times 10^{-0}$ & $5.49 \times 10^{+3}$ \\
                                        &                          & $90$ ($3$, $5$)                        & $8.74 \times 10^{-3}$     & $1.03 \times 10^{-2}$ & $1.02 \times 10^{-2}$ & $8.71 \times 10^{-3}$ & $7.79 \times 10^{-3}$ & $3.47 \times 10^{+2}$ \\
                                        &                          & $91$ ($5$, $2$)                        & $9.67 \times 10^{-3}$     & $1.30 \times 10^{-2}$ & $6.35 \times 10^{-3}$ & $7.78 \times 10^{-3}$ & $1.49 \times 10^{-2}$ & $9.35 \times 10^{+1}$ \\
            \midrule
            \multirow{12}{*}{$10^{-3}$} & \multirow{3}{*}{\ac{mc}} & $50$                                   & $1.91 \times 10^{-1}$     & $1.70 \times 10^{-1}$ & $9.88 \times 10^{-1}$ & $4.93 \times 10^{-0}$ & $4.97 \times 10^{-0}$ & $1.00 \times 10^{+1}$ \\
                                        &                          & $100$                                  & $2.73 \times 10^{-1}$     & $2.32 \times 10^{-1}$ & $1.37 \times 10^{-0}$ & $8.35 \times 10^{-1}$ & $6.46 \times 10^{-1}$ & $6.55 \times 10^{+1}$ \\
            \cmidrule(l){3-9}
                                        & \multirow{5}{*}{\ac{aq}} & $26$ ($2$, $2$)                        & $5.67 \times 10^{-2}$     & $3.50 \times 10^{-2}$ & $5.15 \times 10^{-2}$ & $1.26 \times 10^{-0}$ & $3.02 \times 10^{-0}$ & $2.32 \times 10^{+3}$ \\
                                        &                          & $46$ ($2$, $5$)                        & $8.66 \times 10^{-3}$     & $7.24 \times 10^{-3}$ & $1.98 \times 10^{-2}$ & $4.13 \times 10^{-3}$ & $1.70 \times 10^{-1}$ & $1.78 \times 10^{+3}$ \\
                                        &                          & $93$ ($2$, $10$)                       & $5.87 \times 10^{-3}$     & $1.04 \times 10^{-2}$ & $5.71 \times 10^{-3}$ & $5.57 \times 10^{-3}$ & $5.17 \times 10^{-3}$ & $5.28 \times 10^{+2}$ \\
                                        &                          & $66$ ($3$, $2$)                        & $2.30 \times 10^{-2}$     & $7.99 \times 10^{-3}$ & $1.42 \times 10^{-2}$ & $1.19 \times 10^{-2}$ & $3.26 \times 10^{-2}$ & $3.90 \times 10^{-0}$ \\
                                        &                          & $96$ ($3$, $5$)                        & $1.97 \times 10^{-2}$     & $1.79 \times 10^{-0}$ & $3.78 \times 10^{-2}$ & $5.46 \times 10^{-3}$ & $5.53 \times 10^{-3}$ & $6.42 \times 10^{-3}$ \\
                                        &                          & $90$ ($5$, $2$)                        & $1.78 \times 10^{-2}$     & $6.80 \times 10^{-3}$ & $6.61 \times 10^{-3}$ & $1.16 \times 10^{-2}$ & $9.72 \times 10^{-3}$ & $1.83 \times 10^{-0}$ \\
            \bottomrule
        \end{tabular}
    }
    \caption{Comparison of the relative $L^2$ norm for the weak Poisson problem with the $\abse$ activation, depending on learning rate, resampling frequency, and integration method.}
    \label{tab:1D_poiw_abs}
\end{table}

\begin{table}
    \centering
    \resizebox{0.8\linewidth}{!}{%
        \begin{tabular}{ccccccccc}
            \toprule
            \multirow{2}{*}{$\eta$}     &                          & \multirow{2}{*}{$N_\Omega$ ($P$, $O$)} & \multicolumn{6}{c}{$\nu$}                                                                                                                         \\
            \cmidrule(l){4-9}
                                        &                          &                                        & $1$                       & $10$                  & $100$                 & $500$                 & $1000$                & $5000$                \\
            \midrule
            \multirow{12}{*}{$10^{-2}$} & \multirow{3}{*}{\ac{mc}} & $50$                                   & $7.67 \times 10^{-2}$     & $4.99 \times 10^{-1}$ & $4.64 \times 10^{-1}$ & $2.86 \times 10^{-0}$ & $8.05 \times 10^{-0}$ & $3.45 \times 10^{+2}$ \\
                                        &                          & $100$                                  & $1.75 \times 10^{-1}$     & $4.92 \times 10^{-1}$ & $9.31 \times 10^{-1}$ & $1.55 \times 10^{-0}$ & $1.64 \times 10^{-0}$ & $4.78 \times 10^{-0}$ \\
            \cmidrule(l){3-9}
                                        & \multirow{5}{*}{\ac{aq}} & $40$ ($3$, $2$)                        & $3.87 \times 10^{-2}$     & $1.49 \times 10^{-2}$ & $1.14 \times 10^{-1}$ & $1.34 \times 10^{-0}$ & $1.19 \times 10^{+1}$ & $4.21 \times 10^{+2}$ \\
                                        &                          & $46$ ($3$, $5$)                        & $8.40 \times 10^{-1}$     & $3.06 \times 10^{-2}$ & $3.47 \times 10^{-2}$ & $2.19 \times 10^{-2}$ & $1.77 \times 10^{-0}$ & $6.64 \times 10^{+1}$ \\
                                        &                          & $110$ ($3$, $10$)                      & $7.89 \times 10^{-3}$     & $4.56 \times 10^{-3}$ & $8.17 \times 10^{-3}$ & $5.09 \times 10^{-2}$ & $1.45 \times 10^{-0}$ & $3.92 \times 10^{+2}$ \\
                                        &                          & $84$ ($5$, $2$)                        & $7.41 \times 10^{-3}$     & $1.36 \times 10^{-2}$ & $3.41 \times 10^{-3}$ & $4.18 \times 10^{-2}$ & $4.27 \times 10^{-1}$ & $4.34 \times 10^{+2}$ \\
                                        &                          & $108$ ($5$, $5$)                       & $3.73 \times 10^{-3}$     & $4.77 \times 10^{-3}$ & $5.53 \times 10^{-3}$ & $9.31 \times 10^{-3}$ & $5.17 \times 10^{-1}$ & $3.05 \times 10^{+2}$ \\
                                        &                          & $98$ ($7$, $2$)                        & $3.75 \times 10^{-3}$     & $9.94 \times 10^{-3}$ & $3.71 \times 10^{-3}$ & $5.95 \times 10^{-2}$ & $1.67 \times 10^{-1}$ & $1.66 \times 10^{+2}$ \\
            \midrule
            \multirow{12}{*}{$10^{-3}$} & \multirow{3}{*}{\ac{mc}} & $50$                                   & $2.52 \times 10^{-1}$     & $2.54 \times 10^{-1}$ & $9.53 \times 10^{-1}$ & $2.76 \times 10^{-0}$ & $3.42 \times 10^{-0}$ & $1.47 \times 10^{+1}$ \\
                                        &                          & $100$                                  & $3.16 \times 10^{-1}$     & $3.77 \times 10^{-1}$ & $1.34 \times 10^{-0}$ & $2.56 \times 10^{-0}$ & $1.77 \times 10^{-0}$ & $1.87 \times 10^{-0}$ \\
            \cmidrule(l){3-9}
                                        & \multirow{5}{*}{\ac{aq}} & $21$ ($3$, $2$)                        & $1.59 \times 10^{-0}$     & $1.59 \times 10^{-0}$ & $1.21 \times 10^{-0}$ & $5.16 \times 10^{-1}$ & $5.55 \times 10^{-0}$ & $6.10 \times 10^{+1}$ \\
                                        &                          & $26$ ($3$, $5$)                        & $5.44 \times 10^{-1}$     & $1.66 \times 10^{-0}$ & $3.47 \times 10^{-1}$ & $3.43 \times 10^{-2}$ & $1.54 \times 10^{-1}$ & $1.86 \times 10^{-0}$ \\
                                        &                          & $57$ ($3$, $10$)                       & $6.52 \times 10^{-1}$     & $6.20 \times 10^{-1}$ & $5.57 \times 10^{-2}$ & $4.92 \times 10^{-2}$ & $5.12 \times 10^{-2}$ & $3.86 \times 10^{+1}$ \\
                                        &                          & $50$ ($5$, $2$)                        & $5.41 \times 10^{-2}$     & $3.90 \times 10^{-2}$ & $7.87 \times 10^{-2}$ & $5.77 \times 10^{-2}$ & $2.47 \times 10^{-2}$ & $5.02 \times 10^{+1}$ \\
                                        &                          & $71$ ($5$, $5$)                        & $2.32 \times 10^{-2}$     & $2.31 \times 10^{-2}$ & $4.37 \times 10^{-2}$ & $6.62 \times 10^{-2}$ & $5.95 \times 10^{-2}$ & $2.18 \times 10^{+1}$ \\
                                        &                          & $41$ ($7$, $2$)                        & $2.35 \times 10^{-1}$     & $1.81 \times 10^{-1}$ & $2.24 \times 10^{-1}$ & $1.08 \times 10^{-1}$ & $2.78 \times 10^{-1}$ & $1.50 \times 10^{-0}$ \\
            \bottomrule
        \end{tabular}
    }
    \caption{Comparison of the relative $L^2$ norm for the weak Poisson problem with the $\tanh$ activation, depending on learning rate, resampling frequency, and integration method.}
    \label{tab:1D_poiw_tanh}
\end{table}

\begin{figure}
    \centering
    \subfloat[Learning curve\label{fig:3_l2}]{
        \includegraphics[width=0.45\linewidth]{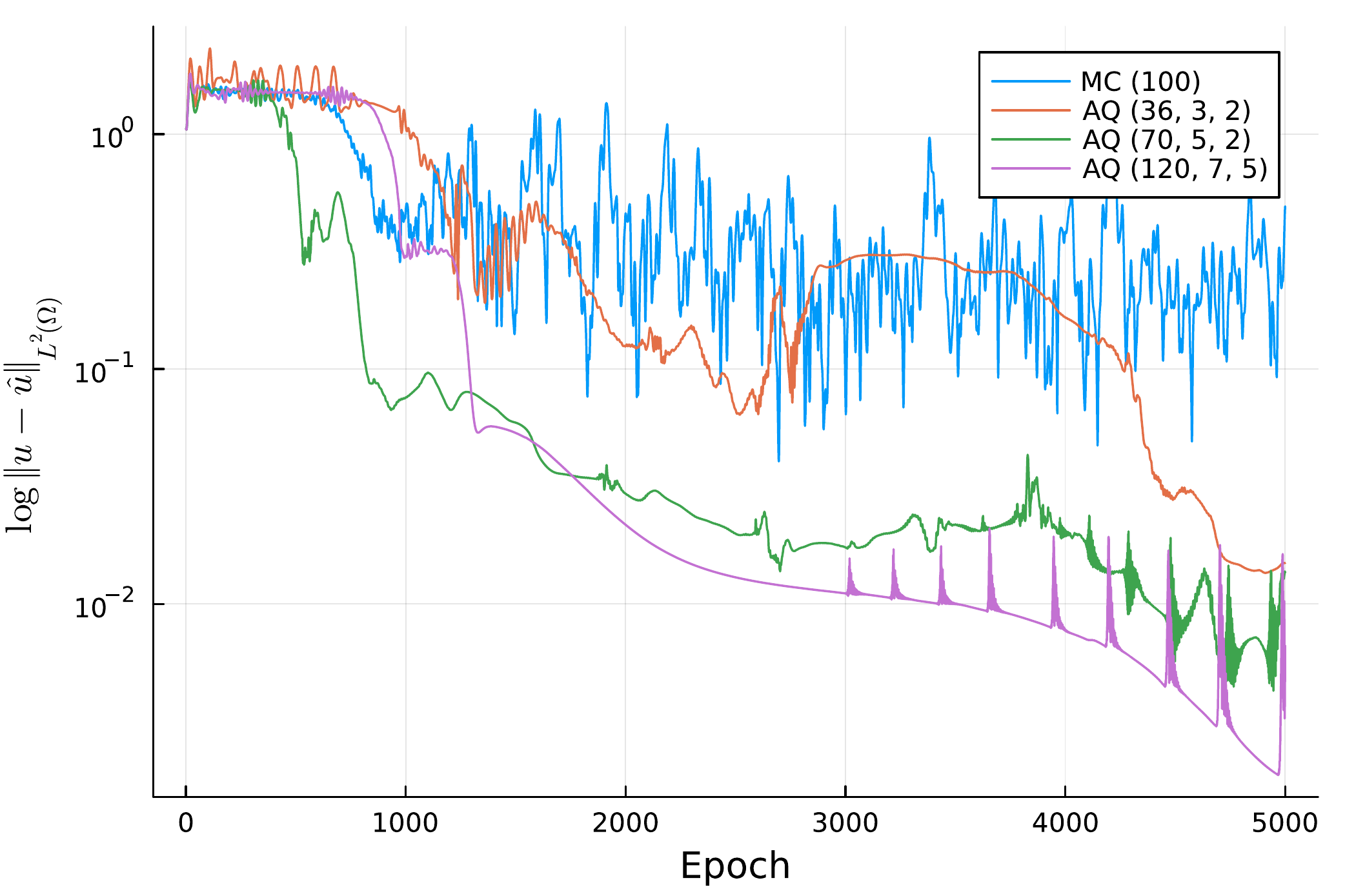}
    }
    \hfill
    \subfloat[Pointwise error\label{fig:3_rel}]{
        \includegraphics[width=0.45\linewidth]{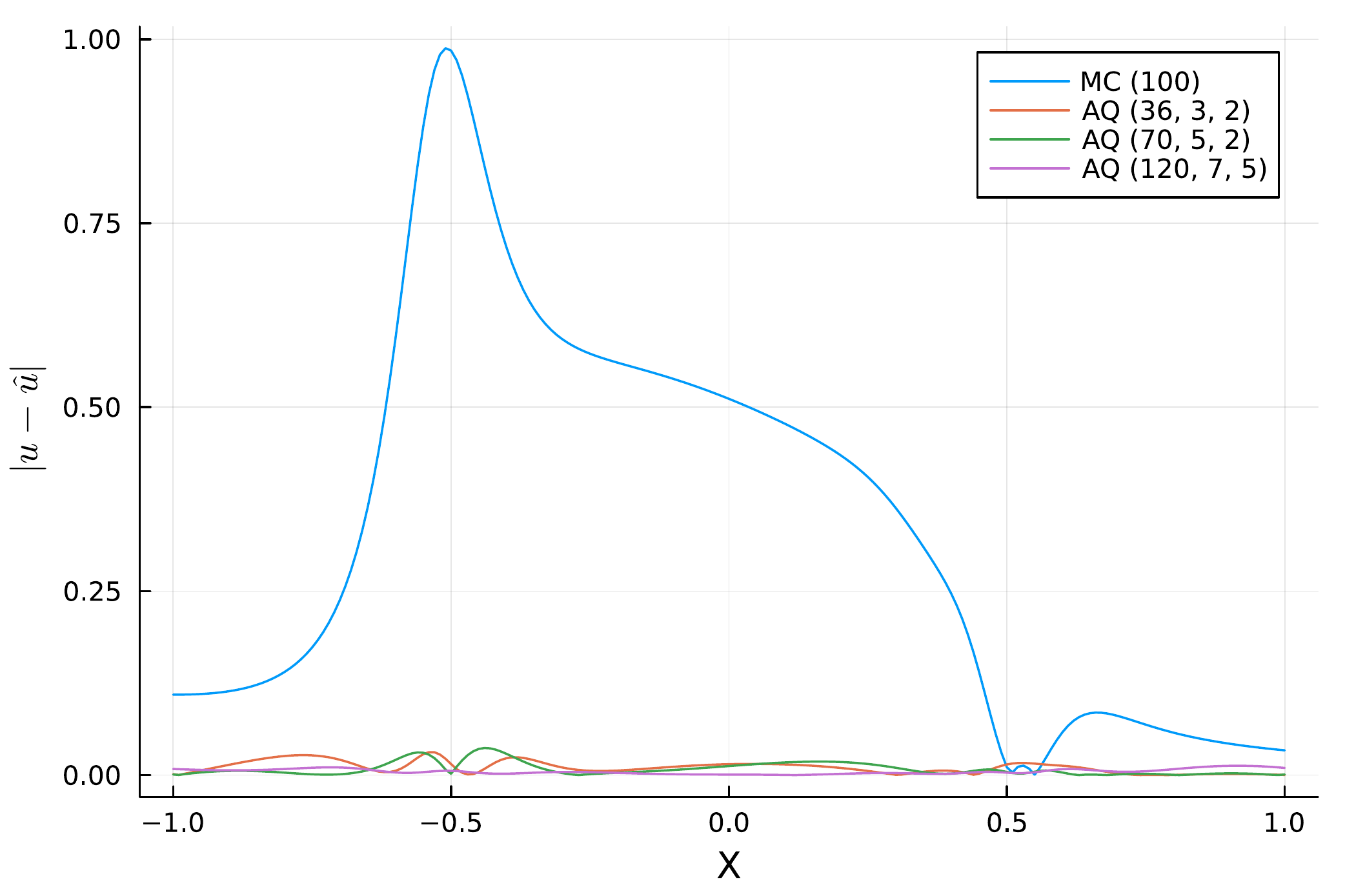}
    }
    \caption{Learning curve \protect\subref{fig:3_l2} and pointwise error \protect\subref{fig:3_rel} for the weak Poisson problem with the $\tanh$ activation. The learning rate is $\eta = 10^{-2}$ and the points are resampled every $10$ epochs.}
    \label{fig:3_l2rel}
\end{figure}

\begin{figure}
    \centering
    \subfloat[Learning curve\label{fig:4_l2}]{
        \includegraphics[width=0.45\linewidth]{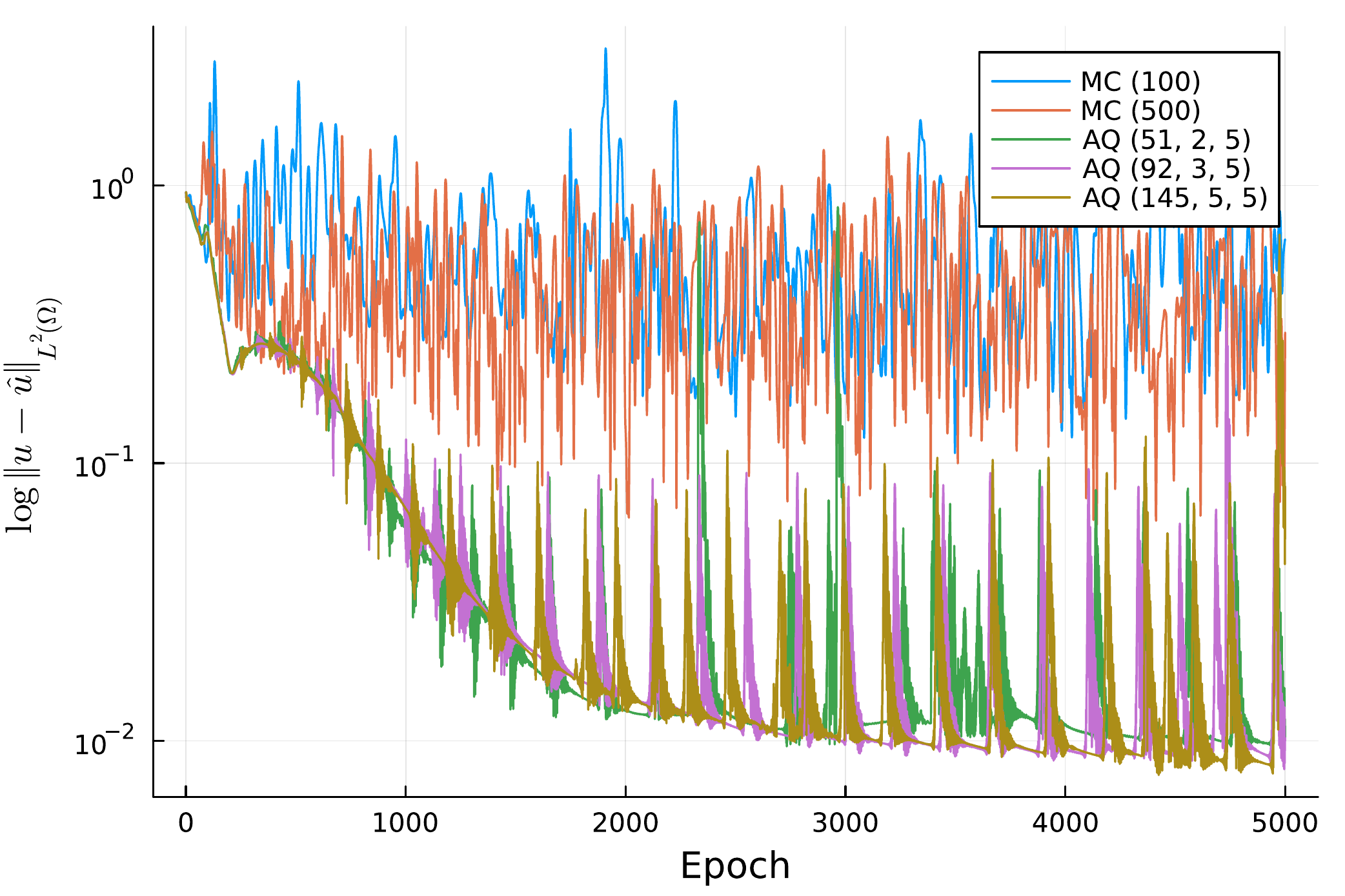}
    }
    \hfill
    \subfloat[Pointwise error\label{fig:4_rel}]{
        \includegraphics[width=0.45\linewidth]{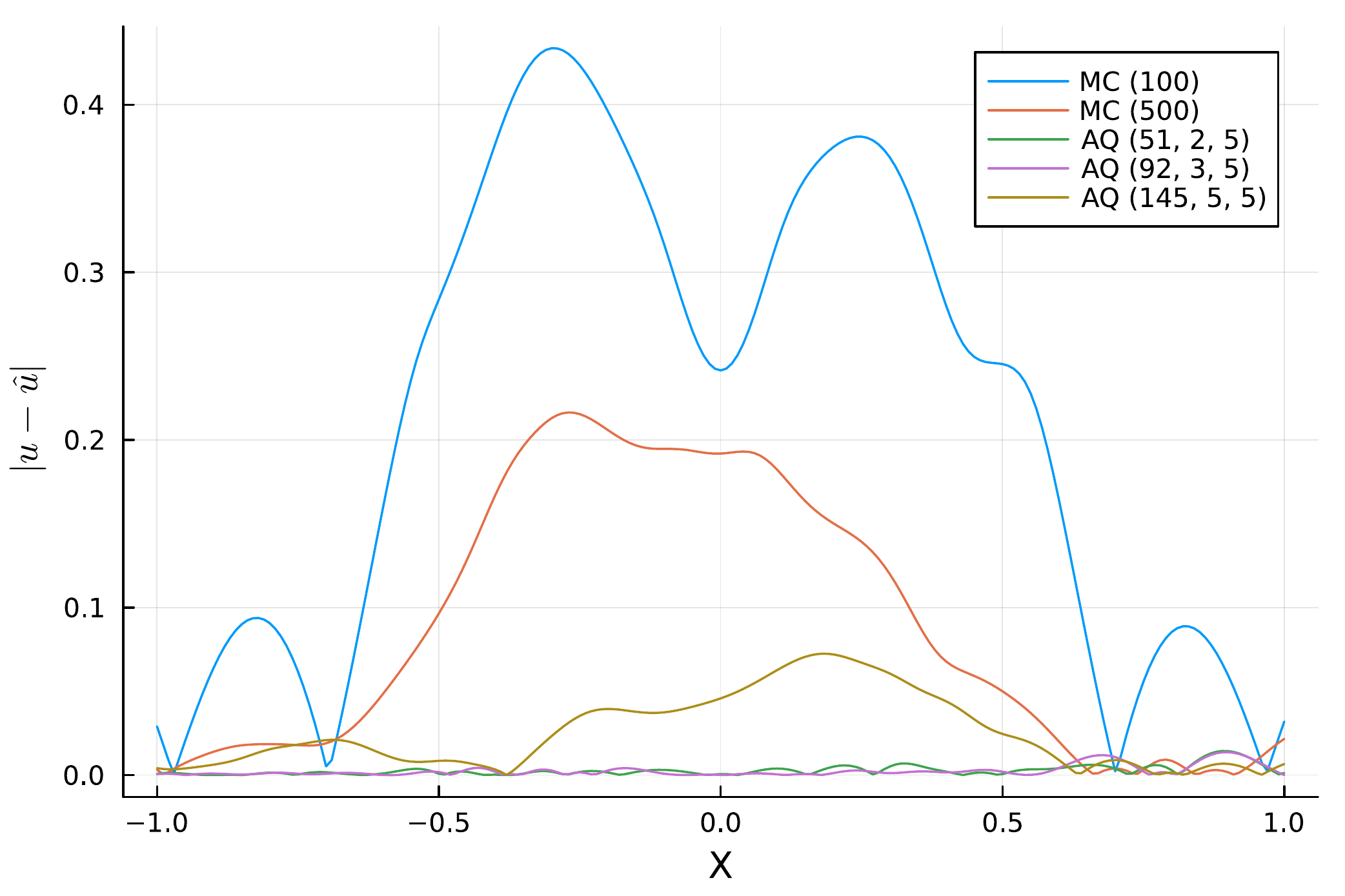}
    }
    \caption{Learning curve \protect\subref{fig:4_l2} and pointwise error \protect\subref{fig:4_rel} for the weak Poisson problem with the $\abse$ activation. The learning rate is set to $10^{-2}$ and the points are resampled every $10$ epochs.}
    \label{fig:4_l2rel}
\end{figure}

\subsubsection{Weak Poisson in 2D}

Based on the experiments in dimension one, we conclude that the integration points should be resampled around every $10$ epochs and the learning rate should be set to $10^{-2}$ to obtain optimal performances with both integration methods. We keep these hyperparameters in the rest of our experiments.

We find that \ac{aq} can reach similar or lower errors with less than half the number of points \ac{mc} needs. With the $\abse$ activation, \ac{aq} with fewer than $1000$ points and \ac{mc} with $10000$ points reach comparable performances. In the case of the $\tanh$ activation, \ac{aq} with around $1660$ points wins over \ac{mc} with $10000$ points. However, we notice that increasing the order of the quadrature or the number of pieces does not significantly reduce the relative $L^2$ norm.

\fig{5_rel} and \fig{6_rel} display the pointwise error between the ground truth and the solutions obtained with \ac{mc} and \ac{aq}. In the case of $\tanh$, we remark that \ac{mc} struggles to model the transition between the high and low regions. Similarly for $\abse$, \ac{mc} does not properly handle the region around the origin, where the solution shows larger variations. In both cases, the pointwise error for the model trained with \ac{aq} is much more homogeneous across the whole domain.

\begin{table}
    \centering
    \subfloat[$\abse$\label{tab:2D_poiw_abs}]{
        \resizebox{0.4\linewidth}{!}{%
            \begin{tabular}{ccc}
                \toprule
                Integration         & ($N_\Omega$, $N_\Gamma$) ($P$, $O$) & $L^2$ norm            \\
                \midrule
                \multirow{3}{*}{MC} & ($1000$, $100$)                     & $2.35 \times 10^{-1}$ \\
                                    & ($5000$, $500$)                     & $1.43 \times 10^{-1}$ \\
                                    & ($10000$, $1000$)                   & $1.10 \times 10^{-1}$ \\
                \cmidrule(l){2-3}
                \multirow{3}{*}{AQ} & ($927$, $85$) ($2$, $2$)            & $1.03 \times 10^{-1}$ \\
                                    & ($2681$, $144$) ($2$, $5$)          & $3.58 \times 10^{-2}$ \\
                                    & ($4041$, $160$) ($3$, $2$)          & $6.40 \times 10^{-2}$ \\
                \bottomrule
            \end{tabular}
        }
    }
    \qquad
    \subfloat[$\tanh$\label{tab:2D_poiw_tanh}]{
        \resizebox{0.4\linewidth}{!}{%
            \begin{tabular}{ccc}
                \toprule
                Integration         & ($N_\Omega$, $N_\Gamma$) ($P$, $O$) & $L^2$ norm            \\
                \midrule
                \multirow{3}{*}{MC} & ($1000$, $100$)                     & $1.33 \times 10^{-1}$ \\
                                    & ($5000$, $500$)                     & $8.51 \times 10^{-2}$ \\
                                    & ($10000$, $1000$)                   & $6.61 \times 10^{-2}$ \\
                \cmidrule(l){2-3}
                \multirow{3}{*}{AQ} & ($1660$, $111$) ($3$, $2$)          & $5.43 \times 10^{-2}$ \\
                                    & ($2964$, $154$) ($3$, $5$)          & $4.47 \times 10^{-2}$ \\
                                    & ($3465$, $175$) ($5$, $2$)          & $4.34 \times 10^{-2}$ \\
                \bottomrule
            \end{tabular}
        }
    }
    \caption{Relative $L^2$ norm for the weak Poisson problems in 2D with the $\abse$ \protect\subref{tab:2D_poiw_abs} and $\tanh$ \protect\subref{tab:2D_poiw_tanh} activations for various integration hyperparameters.}
    \label{tab:reduce_merge}
\end{table}

\begin{figure}
    \centering
    \includegraphics[width=0.8\linewidth]{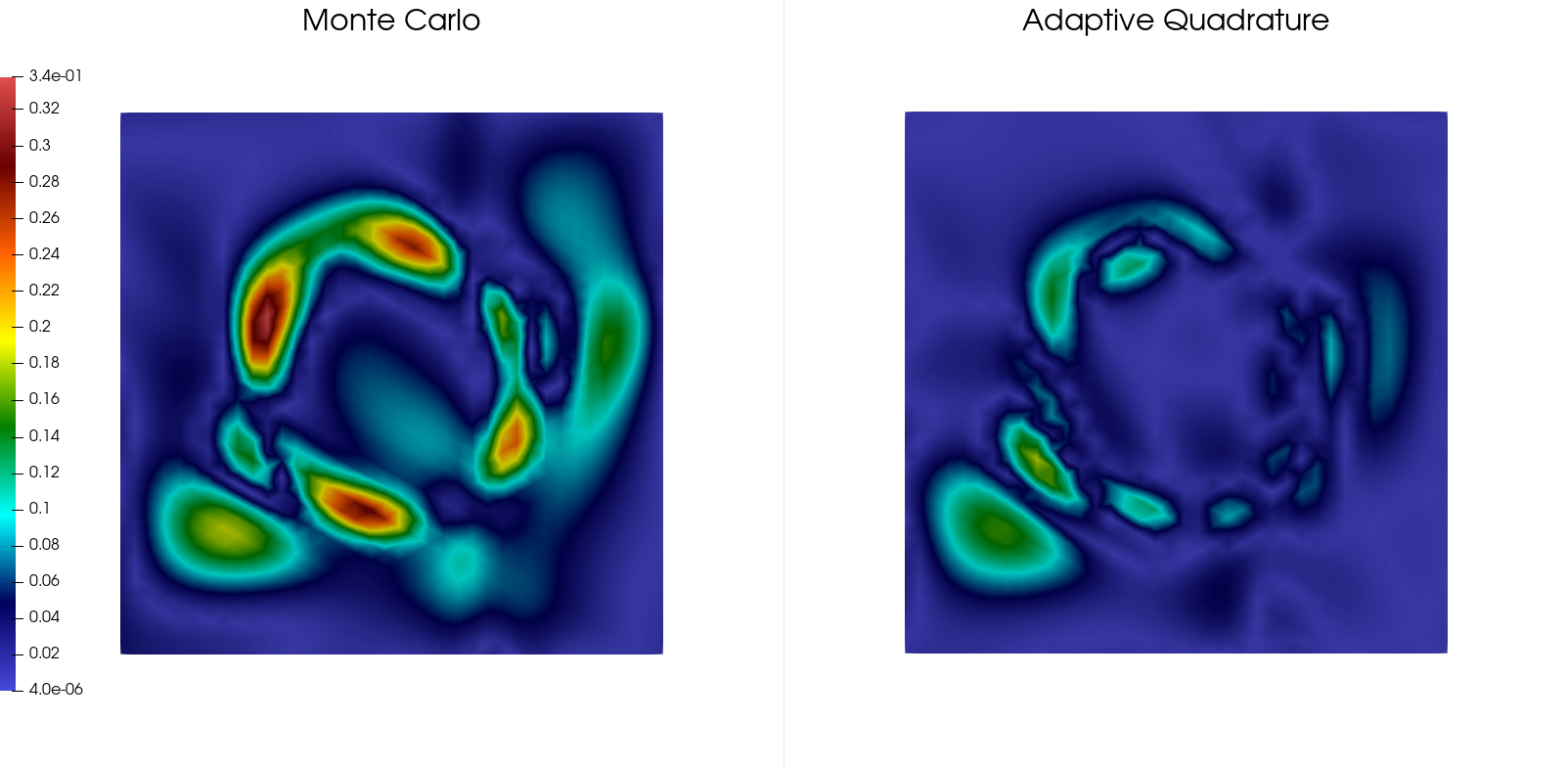}
    \caption{Comparison of the pointwise error for the weak Poisson problem with the $\tanh$ activation. The learning rate is $\eta = 10^{-2}$ and the points are resampled every $10$ epochs. \ac{mc} is trained with $5000$ points and \ac{aq} with $5$ pieces and order $3$ ($2964$ points).}
    \label{fig:5_rel}
\end{figure}

\begin{figure}
    \centering
    \includegraphics[width=0.8\linewidth]{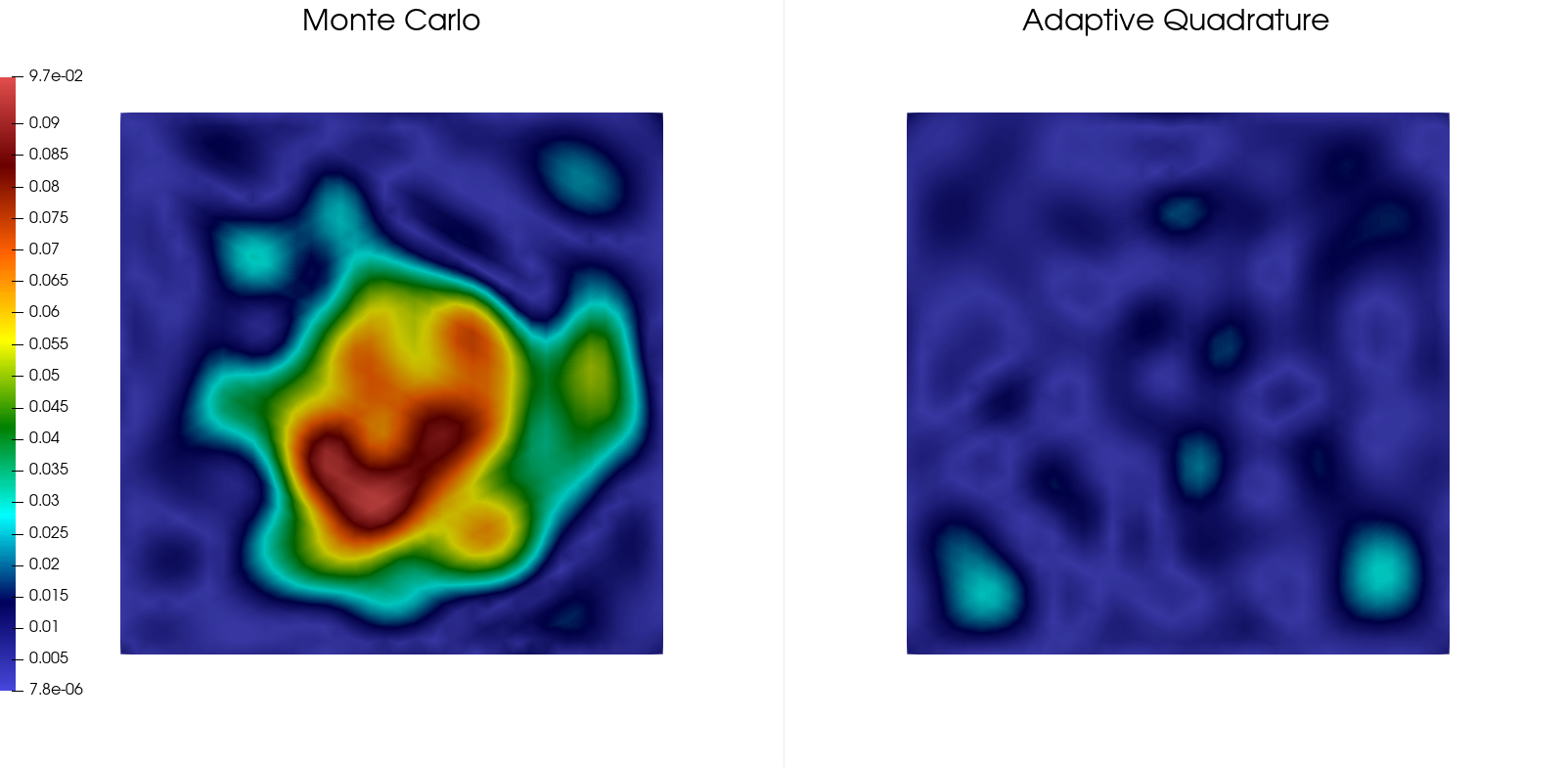}
    \caption{Comparison of the pointwise error for the weak Poisson problem with the $\abse$ activation. The learning rate is set to $10^{-2}$ and the points are resampled every $10$ epochs. \ac{mc} is trained with $5000$ points and \ac{aq} with $2$ pieces and order $5$ ($2681$ points).}
    \label{fig:6_rel}
\end{figure}

\subsection{Robustness to initialisation}
\label{subsect:initialisation}

In this second round of experiments, we study the robustness of our integration method to the initialisation of the network. We solve the weak Poisson problems in dimensions one and two. The learning rate is set to $10^{-2}$ and we resample the integration points every $10$ epochs. We report the minimum, maximum, average, and standard deviation of the final $L^2$ norm on $10$ random initialisations, as well as the average training time in \tab{init_poiw_abs} and \tab{init_poiw_tanh} for the $\abse$ and $\tanh$ activations respectively. We use the same seeds for \ac{mc} and \ac{aq} in both experiments. The number of integration points for \ac{mc} is chosen so that it is larger than the maximum number of points for the same experiments run with \ac{aq}. The exact choices of quadrature order, number of pieces, and integration points are reported in the corresponding tables.

We find that \ac{aq} has a consistently lower average $L^2$ norm than \ac{mc}. By comparing the standard deviation of the error, we infer that the solutions obtained by \ac{aq} are less dependent on the network initialisation compared to \ac{mc}. Besides, we observe that the distributions of the errors obtained with \ac{aq} and \ac{mc} do not overlap: the maximum error with \ac{aq} is lower than the minimum error with \ac{mc}. More importantly, we find that \ac{aq} enables a reduction of the number of integration points while keeping similar or shorter training times.

We conclude that our proposed integration method is more robust to initialisation than \ac{mc} and that it obtains higher accuracies than \ac{mc} with fewer integration points.

\begin{table}
    \centering
    \subfloat[$\abse$.\label{tab:init_poiw_abs}]{
        \resizebox{0.8\linewidth}{!}{%
            \begin{tabular}{cccccccc}
                \toprule
                Dim.               &         & ($N_\Omega$, $N_\Gamma$) ($P$, $O$) & Time (s) & min.                  & avg.                  & std.                  & max.                  \\
                \midrule
                \multirow{2}{*}{1} & \ac{mc} & ($100$, $2$)                        & $19.2$   & $1.98 \times 10^{-1}$ & $4.45 \times 10^{-1}$ & $2.45 \times 10^{-1}$ & $8.31 \times 10^{-1}$ \\
                                   & \ac{aq} & ($93$, $2$) ($3$, $5$)              & $19.5$   & $2.91 \times 10^{-3}$ & $7.88 \times 10^{-3}$ & $3.74 \times 10^{-3}$ & $1.44 \times 10^{-2}$ \\
                \midrule
                \multirow{2}{*}{2} & \ac{mc} & ($10000$, $1000$)                   & $486.0$  & $6.09 \times 10^{-2}$ & $1.09 \times 10^{-1}$ & $2.63 \times 10^{-2}$ & $1.44 \times 10^{-1}$ \\
                                   & \ac{aq} & ($7278$, $230$) ($3$, $5$)          & $413.5$  & $3.05 \times 10^{-2}$ & $6.59 \times 10^{-2}$ & $2.94 \times 10^{-2}$ & $1.07 \times 10^{-1}$ \\
                \bottomrule
            \end{tabular}
        }
    }
    \hfill
    \subfloat[$\tanh$.\label{tab:init_poiw_tanh}]{
        \resizebox{0.8\linewidth}{!}{%
            \begin{tabular}{cccccccc}
                \toprule
                Dim.               &         & ($N_\Omega$, $N_\Gamma$) ($P$, $O$) & Time (s) & min.                  & avg.                  & std.                  & max.                  \\
                \midrule
                \multirow{2}{*}{1} & \ac{mc} & ($200$, $2$)                        & $18.8$   & $1.12 \times 10^{-1}$ & $3.73 \times 10^{-1}$ & $3.96 \times 10^{-1}$ & $1.47 \times 10^{-0}$ \\
                                   & \ac{aq} & ($126$, $2$) ($5$, $10$)            & $18.2$   & $3.08 \times 10^{-3}$ & $9.00 \times 10^{-3}$ & $5.27 \times 10^{-3}$ & $2.19 \times 10^{-2}$ \\
                \midrule
                \multirow{2}{*}{2} & \ac{mc} & ($5000$, $500$)                     & $449.4$  & $6.44 \times 10^{-2}$ & $9.63 \times 10^{-2}$ & $3.77 \times 10^{-2}$ & $1.81 \times 10^{-1}$ \\
                                   & \ac{aq} & ($3224$, $167$) ($3$, $5$)          & $391.3$  & $3.50 \times 10^{-2}$ & $4.50 \times 10^{-2}$ & $6.59 \times 10^{-3}$ & $5.59 \times 10^{-2}$ \\
                \bottomrule
            \end{tabular}
        }
    }
    \caption{Distribution of the relative $L^2$ norm for the weak Poisson problem with the $\abse$ \protect\subref{tab:init_poiw_abs} and $\tanh$ \protect\subref{tab:init_poiw_tanh} activations on $10$ random initialisations. The learning rate is set to $10^{-2}$ and the points are resampled every $10$ epochs.}
    \label{tab:init_poiw}
\end{table}

\subsection{Reduction of the number of integration points}
\label{subsect:reduction}

In this set of experiments, we want to show that it is possible to further reduce the number of integration points generated by \ac{aq} by merging small regions with their neighbours, without sacrificing the accuracy of our method. We keep the same hyperparameters as in the previous paragraph and focus on the two-dimensional problems.

Every time the mesh is updated, we compute the median of the size of the cells. We then identify the cells that are smaller than a given fraction of this median and merge them one by one with their neighbours. We always merge a cell with the largest of its neighbours. In dimension two, after all the cells are merged in this way, the mesh needs an extra post-processing step in order to account for non-convex cells. Indeed, the union of two convex polygons may be non-convex. We split non-convex aggregated cells with the ear clipping algorithm \cite{meisters1975}, and apply our splitting strategy to all resulting cells to obtain triangles and convex quadrangles.

We experiment with several merging thresholds and report our findings in \tab{merging}. In dimension one, we find that merging regions up to $50\%$ of the median of the regions size does not significantly affect the error while allowing for a sizeable reduction of the number of integration points. In the two-dimensional case, raising the merging threshold above $25\%$ harms the performance of the $\abse$ network, but not that of the $\tanh$ network, as the number of integration points does not decrease.

It appears that increasing the merging threshold does not always reduce the number of integration points. Indeed, the aggregated regions may become less and less convex, so they will need to be split into several convex regions.

We illustrate the kind of meshes that a network generates and how they are merged in \fig{7_meshes}. These meshes are obtained from a trained $\tanh$ network solving the weak Poisson problem. We found that the meshes from the previous round of experiments were easier to interpret, so we selected one of the $10$ corresponding models. The boundary of the circular region can be easily identified. The lines that connect two sides of the square domain come from the first layer whereas the other lines come from the second layer. We observe that non-convex regions appear when small regions are merged.

\begin{table}
    \centering
    \subfloat[$\abse$. \ac{aq} has the following settings: $5$ pieces with order $10$ in 1D, $3$ pieces with order $2$ in 2D.\label{tab:merging_abse}]{
        \resizebox{0.45\linewidth}{!}{%
            \begin{tabular}{cccc}
                \toprule
                Dim.               &                          & ($N_\Omega$, $N_\Gamma$) (Threshold) & $L^2$ norm            \\
                \midrule
                \multirow{5}{*}{1} & \multirow{2}{*}{\ac{mc}} & ($200$, $2$)                         & $6.39 \times 10^{-1}$ \\
                                   &                          & ($100$, $2$)                         & $4.55 \times 10^{-1}$ \\
                \cmidrule(l){3-4}
                                   & \multirow{3}{*}{\ac{aq}} & ($290$, $2$) ($0\%$)                 & $8.25 \times 10^{-3}$ \\
                                   &                          & ($198$, $2$) ($50\%$)                & $7.82 \times 10^{-3}$ \\
                                   &                          & ($144$, $2$) ($100\%$)               & $4.41 \times 10^{-2}$ \\
                \midrule
                \multirow{5}{*}{2} & \multirow{2}{*}{\ac{mc}} & ($5000$, $500$)                      & $1.43 \times 10^{-1}$ \\
                                   &                          & ($1000$, $100$)                      & $2.35 \times 10^{-1}$ \\
                \cmidrule(l){3-4}
                                   & \multirow{3}{*}{\ac{aq}} & ($4041$, $158$) ($0\%$)              & $6.40 \times 10^{-2}$ \\
                                   &                          & ($3562$, $148$) ($10\%$)             & $7.27 \times 10^{-2}$ \\
                                   &                          & ($3217$, $132$) ($25\%$)             & $2.78 \times 10^{-2}$ \\
                                   &                          & ($2809$, $115$) ($50\%$)             & $1.41 \times 10^{-1}$ \\
                \bottomrule
            \end{tabular}
        }
    }
    \hfill
    \subfloat[$\tanh$. \ac{aq} has the following settings: $5$ pieces with order $10$ in 1D, $5$ pieces with order $5$ in 2D.\label{tab:merging_tanh}]{
        \resizebox{0.45\linewidth}{!}{%
            \begin{tabular}{cccc}
                \toprule
                Dim.               &                          & ($N_\Omega$, $N_\Gamma$) (Threshold) & $L^2$ norm            \\
                \midrule
                \multirow{5}{*}{1} & \multirow{2}{*}{\ac{mc}} & ($200$, $2$)                         & $2.34 \times 10^{-1}$ \\
                                   &                          & ($100$, $2$)                         & $4.92 \times 10^{-1}$ \\
                \cmidrule(l){3-4}
                                   & \multirow{3}{*}{\ac{aq}} & ($138$, $2$) ($0\%$)                 & $3.08 \times 10^{-3}$ \\
                                   &                          & ($94$, $2$) ($50\%$)                 & $4.18 \times 10^{-3}$ \\
                                   &                          & ($85$, $2$) ($100\%$)                & $2.72 \times 10^{-2}$ \\
                \midrule
                \multirow{5}{*}{2} & \multirow{2}{*}{\ac{mc}} & ($10000$, $1000$)                    & $6.61 \times 10^{-2}$ \\
                                   &                          & ($5000$, $500$)                      & $8.51 \times 10^{-2}$ \\
                \cmidrule(l){3-4}
                                   & \multirow{3}{*}{\ac{aq}} & ($7798$, $259$) ($0\%$)              & $5.02 \times 10^{-2}$ \\
                                   &                          & ($6895$, $218$) ($10\%$)             & $3.96 \times 10^{-2}$ \\
                                   &                          & ($8336$, $244$) ($25\%$)             & $5.26 \times 10^{-2}$ \\
                                   &                          & ($9088$, $204$) ($50\%$)             & $7.19 \times 10^{-2}$ \\
                \bottomrule
            \end{tabular}
        }
    }
    \caption{Relative $L^2$ norm for the weak Poisson problems with the $\abse$ \protect\subref{tab:merging_abse} and $\tanh$ \protect\subref{tab:merging_tanh} activations for various region merging thresholds.}
    \label{tab:merging}
\end{table}

\begin{figure}
    \centering
    \subfloat[$0\%$: $418$ regions. \label{fig:7_0.0}]{
        \includegraphics[width=0.45\linewidth]{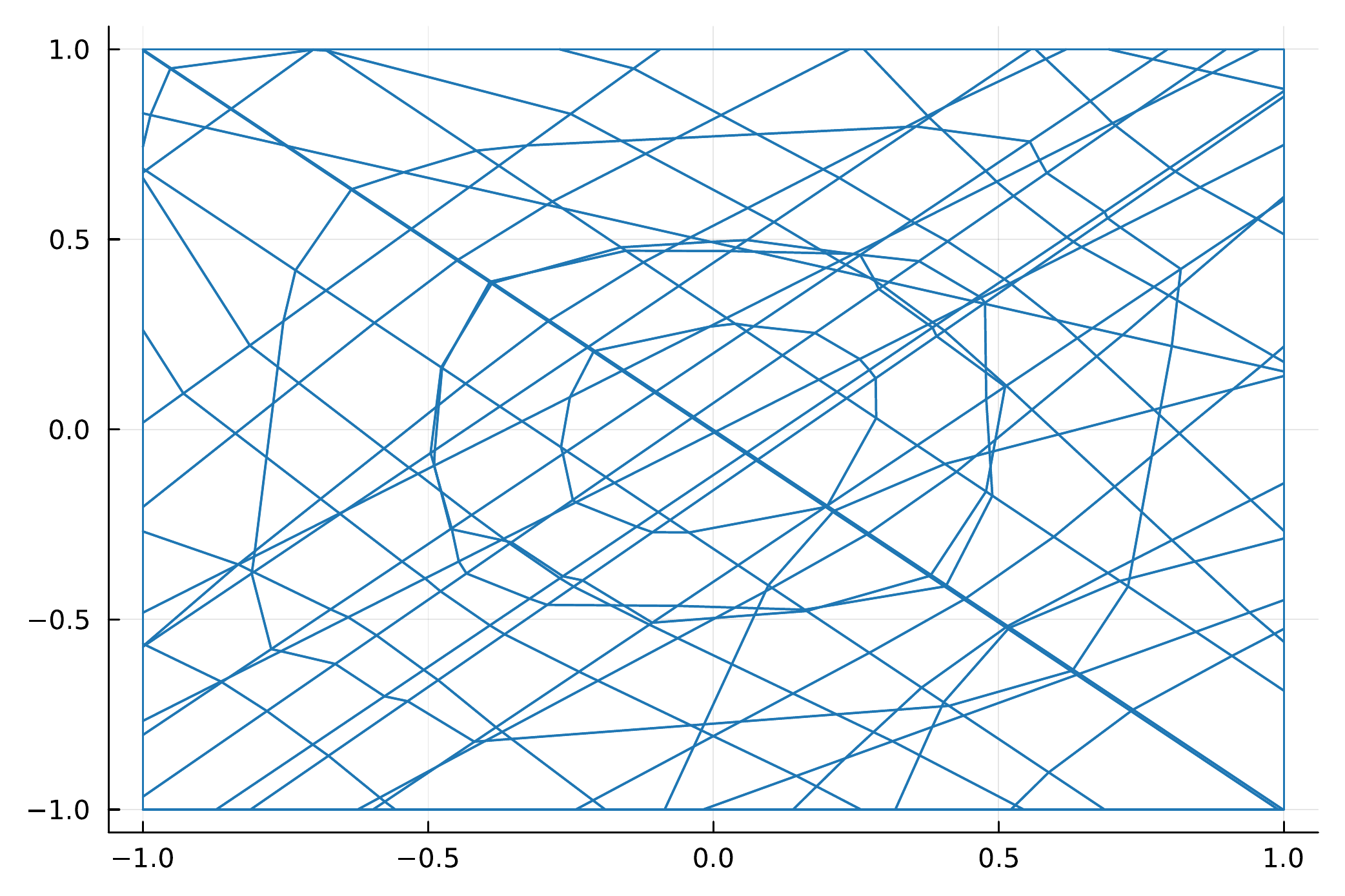}
    }
    \hfill
    \subfloat[$25\%$: $294$ regions. \label{fig:7_0.25}]{
        \includegraphics[width=0.45\linewidth]{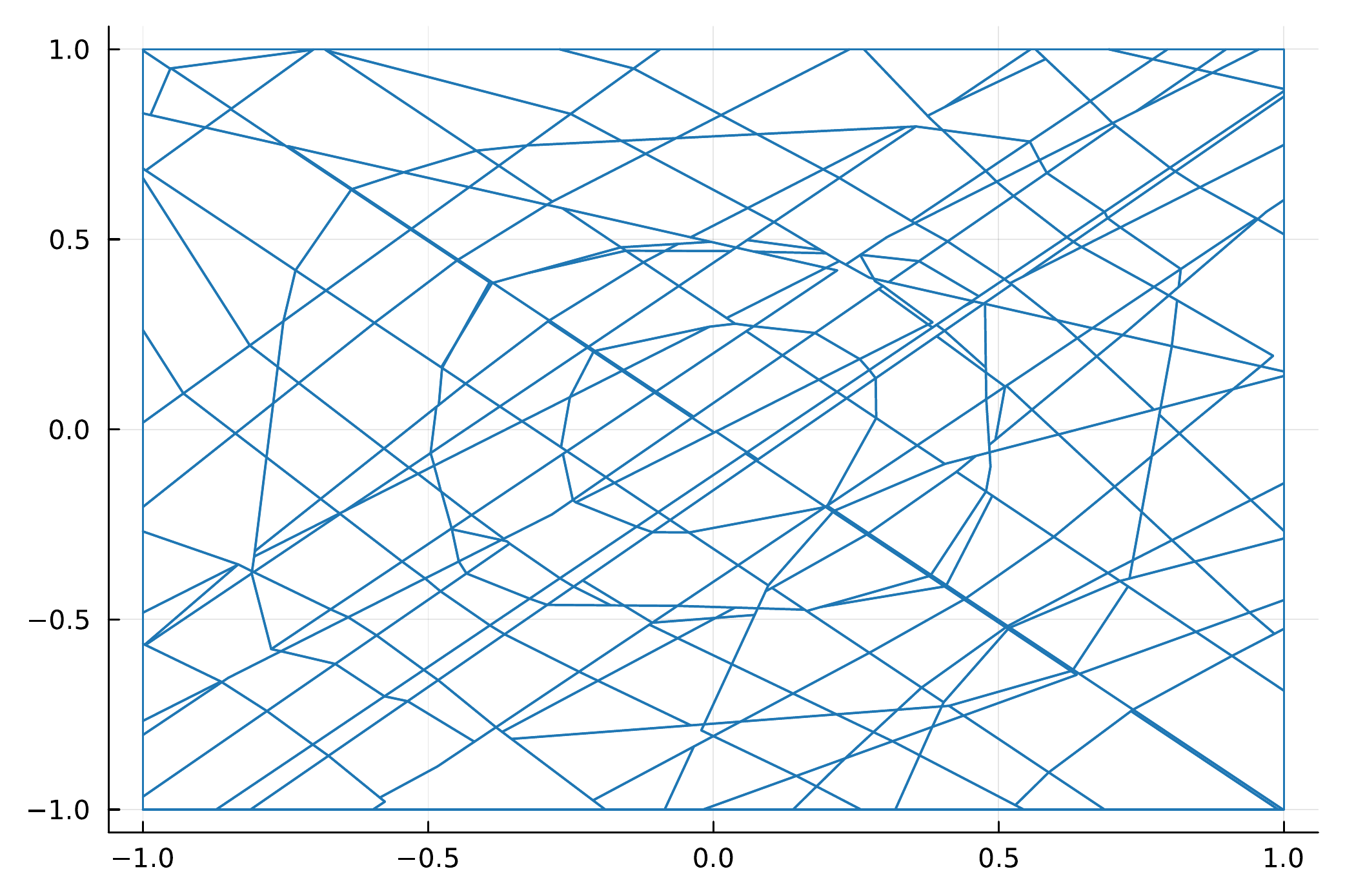}
    }
    \caption{Final mesh, number of cells, and number of integration points at several region merging thresholds for the weak Poisson problem and the $\tanh$ activation. The $\tanh$ activation is cut into $5$ pieces and the quadrature is of order $2$. The merging thresholds are $0\%$ \protect\subref{fig:7_0.0} and $25\%$ \protect\subref{fig:7_0.25}.}
    \label{fig:7_meshes}
\end{figure}

\subsection{More complex domains}
\label{subsect:rhombi}

We complete our numerical experiments by solving a weak Poisson equation on a more complex two-dimensional domain, defined as the union of two rhombi. We solve the following Poisson equation
\begin{equation}
    \left\{\begin{array}{rc}
        -\Delta u = x + y & \text{in } \Omega \\
        u = 0             & \text{on } \Gamma \\
    \end{array}\right.. \label{eq:rhombi}
\end{equation}

We obtain an approximation of the solution with the \ac{fem}. We rely on the following weak formulation for the \ac{fem} solution:
\[\left\{\begin{array}{l}
        \text{Find } u \in H^1_0(\Omega) \text{ such that for all } v \in H^1_0(\Omega), \\
        \int_\Omega \nabla u \cdot \nabla v \D \Omega = \int_\Omega (x + y) v \D \Omega
    \end{array}\right.,\]
where $H^1_0(\Omega)$ is the subset of functions in $H^1(\Omega)$ that vanish on $\Gamma$. We then obtain a mesh composed of $3894$ cells from GMSH and solve this \ac{fem} problem with the Gridap software \cite{badia2020}. We rely on first-order Lagrange finite elements, and the total number of degrees of freedom is $1852$. We use the \ac{fem} solution as the reference to compute the $L^2$ norm for the solution obtained by a \ac{nn}. The relative $L^2$ norm is computed with a quadrature of order $10$ on the cells of the mesh of the \ac{fem} solution.

We then approximate this equation with \acp{nn} using the same penalised weak form that we considered in the previous experiments. We increase the network architecture to $(2, 20, 20, 1)$ and the activation function is $\abse$. We train our network for $10000$ iterations using the ADAM optimiser with a learning rate of $10^{-2}$. We cut the $\abse$ activation into $3$ pieces, use a quadrature of order $2$ in each cell, and do not merge small regions. The integration points are resampled every $10$ epochs. On average, \ac{aq} involved $5894$ integration points in the domain and $292$ on the boundary. We match the computational budget of \ac{aq} for \ac{mc} by sampling $6000$ in-domain points and $300$ points on the boundary for \ac{mc}. To obtain a uniform distribution of integration points for \ac{mc}, we first perform a triangulation of the domain. Then for as many points as we need, we draw a triangle at random by weighting them by their measure and sample one point in the corresponding triangle.

We plot the solutions obtained by \ac{fem}, \ac{mc} and \ac{aq} in \fig{8_abs}. The pointwise difference between \ac{mc} and \ac{fem}, and between \ac{aq} and \ac{fem} is shown in \fig{8_rel}. The relative $L^2$ norm is $7.99 \times 10^{-2}$ for \ac{mc} and $4.68 \times 10^{-2}$ for \ac{aq}. We mention that the training times for the last experiment are $700$ seconds with \ac{aq} and $625$ seconds with \ac{mc}. We believe that this $12\%$ extension in time is well worth the $70\%$ decrease in $L^2$ norm. It is interesting to mention that the pointwise error is located around the same regions in the domain. However, when using \ac{aq} the magnitude of the local maxima of the pointwise error is much lower.

\begin{figure}
    \centering
    \subfloat[Approximation of \eq{rhombi} obtained with \ac{mc} (left), \ac{fem} (middle) and \ac{aq} (right).\label{fig:8_abs}]{
        \includegraphics[width=0.8\linewidth]{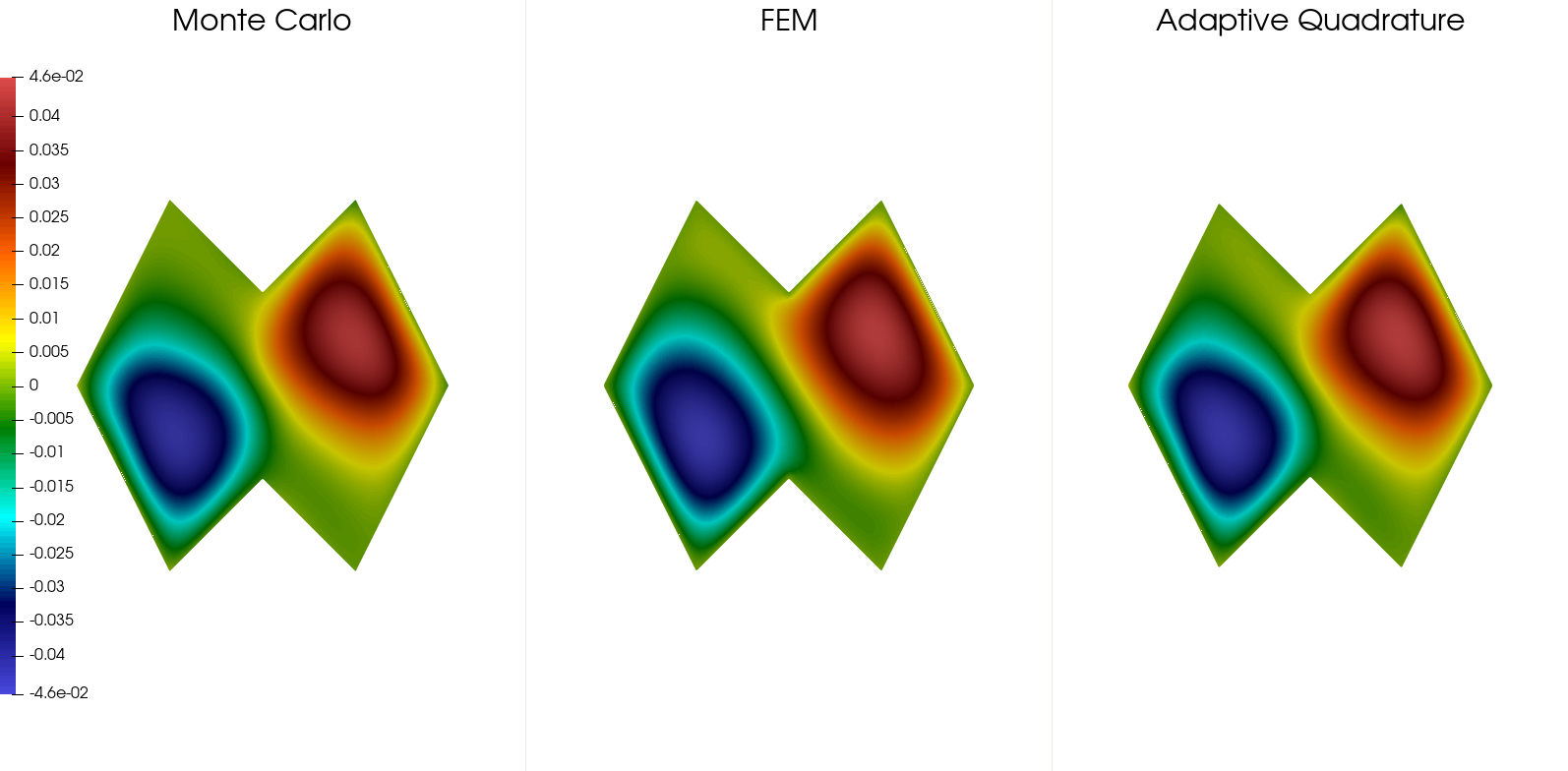}
    }

    \subfloat[Pointwise error between the \ac{fem} solution of \eq{rhombi} and the \ac{mc} solution (left), and the \ac{aq} solution (right). The relative $L^2$ norm is $7.99 \times 10^{-2}$ for \ac{mc} and $4.68 \times 10^{-2}$ for \ac{aq}. \label{fig:8_rel}]{
        \includegraphics[width=0.8\linewidth]{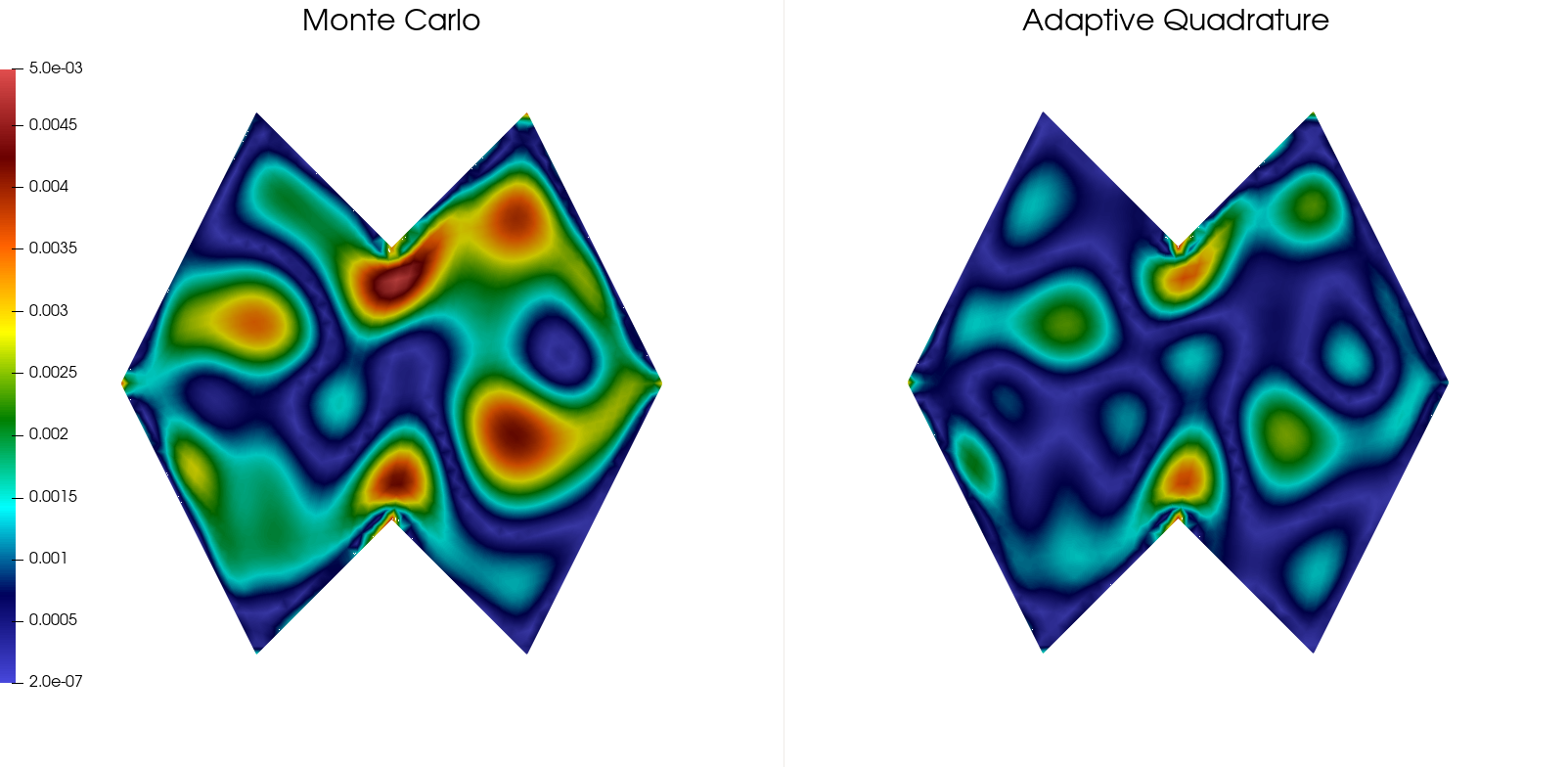}
    }
    \caption{Approximation \protect\subref{fig:8_abs} and pointwise error \protect\subref{fig:8_rel} for the solution of \eq{rhombi}.}
    \label{fig:8_rhombi}
\end{figure}

\subsection{Summary}
\label{subsect:summary}

Our numerical experiments confirmed the effectiveness of our proposed integration method in a wide range of scenarios. \ac{aq} enables to reach lower generalisation errors compared to \ac{mc} while using fewer integration points. We have shown that this new integration method is more robust to the initial parameters of the network. Moreover, the convergence of the network is less noisy and the generalisation error decays quicker, as \ac{aq} can sustain higher learning rates without introducing extra noise in the learning phase. We are convinced that our proposed integration method can help \acp{nn} become a more reliable and predictable tool when it comes to approximating \acp{pde}, so that they can be employed to solve real-world problems.

The most significant differences between \ac{mc} and \ac{aq} were observed for the weak Poisson problem. One reason could be that the loss of the strong Poisson problems can be understood in a pointwise sense. In this case, \ac{mc} minimises the pointwise distance between $\Delta u$ and $\Delta f$ at the location of the integration points. On the contrary, the loss of the weak Poisson problem is to be understood and minimised in a global sense. Our experiments also suggest that the benefits of our adaptive quadrature are more striking at the location of strong gradients.

\subsection{Discussion}
\label{subsect:discussion}

Our approach suffers from the curse of dimensionality and is intended to be used for low-dimensional \acp{pde} as a nonlinear alternative to the \ac{fem}. We noticed that the number of integration points in dimension one was of the order of a few dozen to a few hundred, when the usual number of integration points used for \ac{mc} in the literature is rather a few hundred to a few thousand. In dimension two, our method needs a few thousand integration points, whereas the models in the literature rely on a few tens of thousands of points. Even though our approach is still cheaper than \ac{mc} in the two-dimensional case, the reduction factor is much smaller compared to the one-dimensional scenario. We found that merging linear regions can help reduce the number of linear regions and thus integration points. This phenomenon should amplify as the dimension increases, but we are only ever concerned with problems arising from physics, which involve space and time only. We would like to extend our method to the three-dimensional case in the future.

We have observed that the time spent on extracting the mesh is offset by the reduction in the number of integration points, as the overall training very often takes less time with \ac{aq} than with \ac{mc}.

Our idea to regularise the activation function was motivated by the relationship between \ac{fem} and $\ReLU$ networks, as well as the need for a smooth activation so that the energy functional itself is smooth. We believe that it is more economical to use a smoothed \ac{cpwl} activation function than to directly use a smooth activation such as $\tanh$ because intuitively fewer pieces are necessary to reach a given tolerance in the former case. Whereas most of the current implementations of \ac{pinn} and its variants can only handle tensor-product domains or balls, our proposed linearisation algorithm can naturally handle any polygonal domain. This is an important step towards the use of \acp{nn} solvers for practical issues.

One direction for future work is tackling the variational setting in the general case when it is not possible to recast the problem as an energy minimisation. We point out that our domain decomposition strategy and adaptive quadrature can be extended to higher-order approximations (e.g. splines) of the activation function, which would result in curved boundaries for the regions.

\section{Conclusion}
\label{sect:conclusion}
In this work, we introduce a procedure to define adaptive quadratures for smooth \acp{nn} suitable for low-dimensional \ac{pde} discretisation. It relies on a decomposition of the domain into regions where the network is almost linear. We also show the importance of smoothness in the training process and propose a regularisation of the standard ReLU activation function.

We carry out the numerical analysis of the proposed framework to obtain upper bounds for the integration error. Numerical experimentation shows that our integration method helps make the convergence less noisy and quicker compared to Monte Carlo integration. We observe that the number of integration points needed to compute the loss function can be significantly reduced with our method and that our integration strategy is more robust to the initialisation of the network. We illustrated the benefit of our adaptive integration on the Poisson equation, but it can be extended to any \ac{pde} involving a symmetric and coercive bilinear form.

While \ac{nn} solvers are applied to increasingly intricate problems, they still lack reliability and theoretical guarantees before being used on real-world problems. Besides, most \ac{pinn} frameworks can only handle tensor-product domains, while the \ac{fem} can be applied to much more complex domains. The adaptive quadratures we propose, the error bounds we prove, and the seamless possibility to handle polygonal domains of our method represent a few steps further in bridging these gaps.

For the sake of reproducibility, our implementation of the proposed adaptive quadrature is openly available at \cite{magueresse2023}.

\section*{Acknowledgements}
\label{sect:acknowledgements}
This research was partially funded by the Australian Government through the Australian Research Council (project number DP220103160).

\appendix

\section{Proofs}
\label{app:proofs}
\subsection{Proof of Lemma \ref{lem:integrability}}
\label{proof:lem:integrability}

\begin{proof}
    Let $\alpha > 0$ and $\rho \in \mathcal{A}^\alpha$. We only show the lemma at $+\infty$, as it is obtained by symmetry at $-\infty$ by considering $\tilde{\rho}: x \mapsto \rho(-x)$. Let $M > 0$ and define $I = \oo{M}{+\infty}$. Since $x \mapsto \abs{x}^{2 + \alpha} \rho''(x)$ is bounded on $\RR$, the $p$-test for improper integrals implies that $\rho''$ is integrable on $I$. Besides, for all $x \geq M$, one can write
    $$\rho'(x) = \rho'(x) - \rho'(M) + \rho'(M) = \int_M^x \rho''(t) \D{t} + \rho'(M).$$
    This shows that $\rho'$ has a finite limit at $+\infty$, that we write $A$. Furthermore, it holds
    $$\abs{\rho'(x) - A} = \abs{\int_x^\infty \rho''(t) \D{t}} \leq \int_x^\infty \abs{\rho''(t)} \D{t} \lesssim \int_x^\infty \frac{1}{t^{2 + \alpha}} \D{t} = \frac{1}{(1 + \alpha) x^{1 + \alpha}}.$$
    This proves that $x \mapsto \abs{x}^{1 + \alpha} \abs{\rho'(x) - A}$ is bounded on $I$. We introduce the function $\eta: x \mapsto \rho(x) - A x$, which is such that $\eta'(x) = \rho'(x) - A$ so $x \mapsto \abs{x}^{1 + \alpha} \abs{\eta'(x)}$ is bounded. Applying the same arguments as above to $\eta$ shows that it has a finite limit $B$ at $+\infty$ and that $x \mapsto \abs{x}^\alpha \abs{\eta(x) - B} = \abs{x}^\alpha \abs{\rho(x) - A x - B}$ is bounded. We conclude that $\rho$ behaves linearly at $+\infty$, as it has a slant asymptote $\rho_{+\infty}: x \mapsto A x + B$ at $+\infty$.

    Using the $p$-test for improper integrals again, $x \mapsto \abs{x}^\alpha \abs{\rho(x) - \rho_{+\infty}(x)}$ being bounded on $I$ implies that $\rho - \rho_{+\infty}$ belongs to $L^p(I)$ for all $p > 1/\alpha$. Since $\rho$ and $\rho_{+\infty}$ are continuous on $\cc{0}{M}$, $\rho - \rho_{+\infty}$ is bounded on $\cc{0}{M}$ and the $L^p$ integrability of $\rho - \rho_{+\infty}$ extends to $\RR_+$ as a whole. Similar arguments show that $\rho' - \rho_{+\infty}' \in L^q(\RR_+)$ for all $q > 1/(1 + \alpha)$. Finally, $\rho''$ is bounded on $\RR$ (by definition of $\mathcal{A}^\alpha$) so in particular it is bounded on $\cc{0}{M}$. Together with the boundedness of $x \mapsto \abs{x}^{2 + \alpha} \rho''(x)$, we conclude that $\rho'' \in L^r(\RR_+)$ for all $r > 1/(2 + \alpha)$.
\end{proof}

\subsection{Proof of Lemma \ref{lem:bound}}
\label{proof:lem:bound}

\begin{proof}
    Let $\alpha > 0$, $\rho \in \mathcal{A}^\alpha$, $1/\alpha < p < \infty$ and $I \in P(\rho)$ be a compatible interval for the function $\rho$. If $\rho$ is linear on $I$, then for all $x < y \in I$, $T[\rho, x]$ coincides with $\rho$ on $\oo{x}{y}$, and so do $T[\rho, y]$ and $T[\rho, x, y]$. Besides, $\rho'' = 0$ so the proposition holds trivially. We thus suppose that $\rho$ is either strictly convex or strictly concave on $I$, which means that $\rho''$ is zero at the two ends of $I$ and nowhere else in $I$.

    For $\delta \geq 0$, we define the set $E_\delta = \{(x, y) \in I^2, y - x \geq \delta\}$. We also write $E = E_0$. Let now $\delta > 0$ be fixed. For all $(x, y) \in E_\delta$, the quantity $\aabs{\rho''}_{L^{p/(2p+1)}(\oo{x}{y})}$ is not zero since $y > x$ and the integrand only vanishes on a set of zero measure. Besides, if $\oo{x}{y}$ is infinite, $T[\rho, x, y]$ coincides with $\rho_{\pm \infty}$ and since $p > 1/\alpha$, \lem{integrability} ensures that $\rho - T[\rho, x, y]$ belongs to $L^p(\oo{x}{y})$. This enables us to consider the function
    \[r: E_\delta \ni (x, y) \mapsto \frac{\aabs{\rho - T[\rho, x, y]}_{L^p(\oo{x}{y})}}{\aabs{\rho''}_{L^{p/(2p+1)}(\oo{x}{y})}}.\]
    We want to show that $r$ can be defined on $E$ as a whole, and that $r$ is bounded on $E$.

        {\itshape (i) Continuity and differentiability of $c$ on $E_\delta$.}
    By definition of $I$, for all $x < y \in I$, the tangents to $\rho$ at $x$ and $y$ intersect at an abscissa that we write $c(x, y)$, with $x < c(x, y) < y$. More precisely, for finite $x < y \in I$, the function $c$ has the following expression
    \[c(x, y) = -\frac{[\rho(y) - y \rho'(y)] - [\rho(x) - x \rho'(x)]}{\rho'(y) - \rho'(x)},\]
    in which the denominator is not zero because $\rho'$ is injective on $I$. We extend $c$ to possible infinite values of $x$ or $y$ by setting $c(x, +\infty) = \lim_{y \to +\infty} c(x, y)$ and $c(-\infty, y) = \lim_{x \to -\infty} c(x, y)$. Since $\rho$ and $\rho'$ are continuous on $I$, and since $\rho'$ is injective on $I$, we conclude that $c$ is continuous on $E_\delta$. By using the differentiation rule for quotients, similar arguments show that $c$ is differentiable on $E_\delta$.

        {\itshape (ii) Continuity of $r$ on $E_\delta$.}
    Since $\rho$ and $\rho'$ are continuous on $\RR$, the map $x \mapsto T[\rho, x]$ is continuous. Besides, $c$ is continuous so the map $(x, y) \to T[\rho, x, y]$ is also continuous. Finally, any map $(x, y) \to \int_x^y \phi(t, x, y) \D t$ is continuous whenever $\phi$ is continuous. Here, $(x, y) \mapsto \rho - T[\rho, x, y]$ and $\rho''$ are continuous, so we conclude that the numerator and the denominator of $r$ are continuous. Moreover, since $y - x \geq \delta > 0$, the denominator of $r$ is not zero, which makes $r$ continuous on $E_\delta$.

    \lem{integrability} ensures that the numerator of $r$ is bounded. The denominator is bounded from below because $y - x > \delta$ and $\rho''$ only vanishes on a set of zero measure. It is bounded from above because when $p > 1/\alpha$, we verify that $p/(2p+1) > 1/(\alpha+2)$ so \lem{integrability} ensures that $\rho''$ belongs to $L^{p/(2p+1)}(\RR)$. This proves that $r(x, y)$ is bounded on $E_\delta$. Thus the only case where $r$ could be unbounded is when $x$ and $y$ get arbitrarily close ($\delta$ goes to zero).

        {\itshape (iii) Continuity and differentiability of $c$ on $E$.}
    We now show that $c$ can be continuously extended to $E$ and that this extension is differentiable on $E$. To do so, we prove that for all element $x \in I$, $\lim_{y \to x} c(x, y) = x$. One can prove that for all $y \in I$, $\lim_{x \to y} c(x, y) = y$ using a similar argument. Let $\epsilon \leq y - x$, so that $x + \epsilon \in I$. We start by rearranging the terms in the expression of $c(x, x + \epsilon)$ and obtain the following
    \[c(x, x + \epsilon) = x + \frac{\rho'(x + \epsilon) - \epsilon^{-1} (\rho(x + \epsilon) - \rho(x))}{\rho'(x + \epsilon) - \rho'(x)} \epsilon.\]
    Now, let $k \geq 2$ be the smallest integer such that the $k$-th derivative of $\rho$ at $x$ is not zero. By definition of a compatible interval, $k$ will be equal to $2$ for all interior points of $I$. Since $\rho$ is analytic in the neighbourhood of the zeros of $\rho''$, there also exists such a $k > 2$ for the ends of $I$. We can always take $\epsilon$ as small as needed so that $\rho$ is analytic around $x$ in a neighbourhood of radius $\epsilon$. We obtain a Taylor expansion of the fraction above by expanding its numerator and denominator to order $k-1$ around $x$, expressed at $x + \epsilon$, and arrive at
    \[c(x, x + \epsilon) = x + \frac{k - 1}{k} \epsilon + O(\epsilon^2).\]
    This shows that the function $\hat{c}$ defined by $\hat{c}(x, y) = c(x, y)$ if $x < y$ and $\hat{c}(x, x) = x$ is continuous and differentiable on $E$.

        {\itshape (iv) Continuity of $r$ on $E$.}
    Let $x \in I$. We show that $\epsilon \mapsto r(x, x + \epsilon)$ has a finite limit when $\epsilon$ goes to zero by finding an equivalent of the numerator and the denominator of $r$. We write the numerator as $(I_1(\epsilon)^p + I_2(\epsilon)^p)^{1/p}$, where $I_1$ and $I_2$ are the integrals on $\oo{x}{c(x, x+\epsilon)}$ and $\oo{c(x, x+\epsilon)}{x + \epsilon}$ respectively. The denominator of $r$ is written $I_3(\epsilon)$. Here again, let $k \geq 2$ be the smallest integer such that the $k$-th derivative of $\rho$ at $x$ is not zero.

    We use the mean-value form of the remainder in the $k$-th-order Taylor expansion of $\rho$ to obtain that for all $t \in \oo{x}{c(x, x + \epsilon)}$, there exists $\eta_t \in \oo{x}{t}$ such that $\rho(t) - T[\rho, x](t) = \frac{1}{k!} \rho^{(k)}(\eta_t) (t - x)^k$. Then, owing to the mean-value theorem for integrals since $\rho^{(k)}$ is continuous, there exists $\eta \in \oo{x}{c(x, x + \epsilon)}$ such that
    \[I_1(\epsilon)^p = \int_{x}^{c(x, x + \epsilon)} \abs{\frac{1}{k!} \rho^{(k)}(\eta_t) (t - x)^k}^p \D t = \frac{1}{k!^p (kp+1)} |\rho^{(k)}(\eta)|^p (c(x, x + \epsilon) - x)^{kp+1}.\]
    We can now use the previous Taylor expansion of $c$ around $(x, x)$ and also expand $\rho^{(k)}(\eta)$ around $x$ since $\eta$ goes to $x$ as $\epsilon$ approaches zero. We finally obtain the following expansion for $I_1(\epsilon)^p$
    \[I_1(\epsilon)^p = \frac{1}{k!^p (kp+1)} \left(\frac{k-1}{k}\right)^{kp+1} |\rho^{(k)}(x)|^p \epsilon^{kp+1} + o(\epsilon^{kp+1}).\]
    We obtain a $k$-th order Taylor expansion of $\rho - T[\rho, x + \epsilon]$ as follows: there exist $\eta_1, \eta_2 \in \oo{x}{x + \epsilon}$, such that for all $t \in \oo{c(x, x + \epsilon)}{x + \epsilon}$, there exists $\eta_t$ between $x$ and $t$ such that
    \[\rho(t) - T[\rho, x + \epsilon](t) = \frac{1}{k!} \left[\rho^{(k)}(\eta_t) (t - x)^k - k \rho^{(k)}(\eta_2) \epsilon^{k-1} (t - x) + k \rho^{(k)}(\eta_2) \epsilon^k - \rho^{(k)}(\eta_1) \epsilon^k\right] + o(\epsilon^k).\]
    We observe that $\eta_1$, $\eta_2$ and $\eta_t$ go to $x$ as $\epsilon$ approaches zero. This means that the Taylor expansion of $\rho^{(k)}(\eta_t)$ is $\rho^{(k)}(x) + O(\epsilon)$, and similarly at $\eta_1$ and $\eta_2$. Besides, using the expansion of $c(x, x+ \epsilon)$, we show that $x + \frac{k-1}{k} \epsilon + o(\epsilon) \leq t \leq x + \epsilon$, which means that $t - x$ is of the order of $\epsilon$. After keeping the terms of order $\epsilon^k$, factoring by $\rho^{(k)}(x) \epsilon^k$ and introducing the change of variable $u = \frac{t - x}{\epsilon}$, we obtain
    \[\rho(t) - T[\rho, x + \epsilon](t) = \frac{\rho^{(k)}(x)}{k!} \left[u^k - k u + k - 1\right] \epsilon^k + o(\epsilon^k).\]
    The bounds of the integral become $\frac{k-1}{k} + o(1)$ and $1$, and we decompose the integration domain into $\oo{\frac{k-1}{k}}{1}$ and $\oo{\frac{k-1}{k} + o(1)}{\frac{k-1}{k}}$. The second integral is going to be negligible against the first because its integrand is bounded and its integration domain has a length of $o(1)$. Putting everything together, we obtain the following expansion for $I_2(\epsilon)^p$
    \[I_2(\epsilon)^p = \frac{1}{k!^p} \left(\int_{\frac{k-1}{k}}^{1} \abs{u^k - 1 - k (u - 1)}^p \D u\right) |\rho^{(k)}(x)|^p \epsilon^{kp+1} + o(\epsilon^{kp+1}).\]
    Using a similar argument, the Taylor expansion of the integrand of the denominator of $r$ is $\frac{1}{(k-2)!} \rho^{(k)}(x)(t - x)^{k-2} + o((t - x)^{k-2})$ and taking the $L^{p/(2p+1)}$ norm of this relationship, we find that the expansion of $I_3(\epsilon)^p$ is
    \[I_3(\epsilon)^p = \frac{1}{(k-2)!^p} \left(\frac{2p+1}{kp+1}\right)^{2p+1} |\rho^{(k)}(x)|^p \epsilon^{kp+1} + o(\epsilon^{kp+1}).\]
    Altogether, in the expression of $r(x, x + \epsilon)^p$, we can factor the numerator and the denominator by $|\rho^{(k)}(x)|^p \epsilon^{kp+1}$ and taking the limit as $\epsilon$ goes to zero, we obtain
    \[\lim_{\epsilon \to 0} r(x, x + \epsilon)^p = \frac{1}{(k(k-1))^p} \left(\frac{kp+1}{2p+1}\right)^{2p+1} \left(\frac{1}{kp+1} \left(\frac{k-1}{k}\right)^{kp+1} + \int_{\frac{k-1}{k}}^{1} \abs{u^k - 1 - k (u - 1)}^p \D u\right).\]
    This shows that the function $\hat{r}$ defined by $\hat{r}(x, y) = r(x, y)$ if $x < y$ and $\hat{r}(x, x)$ as the quantity above is continuous on $E$ as a whole. Besides, $\hat{r}$ has finite limits on $\partial E$, so we conclude that it is bounded on $E$.

        {\itshape (v) Proof of the other inequalities.}
    The same proof can be adapted for the bound on the derivatives, and the bounds in $L^\infty$ norm (\lem{integrability} guarantees that $\rho'' \in L^{1/2}(\RR)$). There is one important change when bounding the derivative: all instances of $k$ must be replaced by $k-1$ (since $\rho^{(k)} = (\rho')^{(k-1)}$), so the appropriate norm of $\rho''$ needed to cancel $\epsilon^{(k-1)p + 1}$ is $p/(p+1)$. The other arguments are the same everywhere, up to different constants in the equivalent of $r$ at points of the type $(x, x$). It is still possible to define the corresponding function $r$ on $E$ as a whole and to show that it is bounded on $E$.
\end{proof}

\subsection{Proof of Proposition \ref{prop:cpwl}}
\label{proof:prop:cpwl}

\begin{proof}
    This proof is inspired by the heuristics given in \cite{berjon2015}. We keep the notations of the definition in \subsect{approximation_space}. Let $\alpha > 0$, $\rho \in \mathcal{A}^\alpha$, $1/\alpha < p \leq \infty$, $q \geq p$ and $I$ be a compatible interval for $\rho$. If $\rho$ is linear on $I$, then $\rho$ coincides with its tangent at either end of $I$ so it is enough to define the restriction of $\pi_{n, p}[\rho]$ to $I$ as this tangent. We thus decompose the integral norms of $\rho - \pi_{n, p}[\rho]$ and $\rho' - \pi_{n, p}[\rho]'$ on the $\kappa(\rho)$ compatible intervals for $\rho$ where $\rho$ is not linear. We decide to place $n' = \floor{n/\kappa(\rho)}$ points in each interval.

        {\itshape (i) Bound in one compatible interval for $q < \infty$.}
    Since $\pi_{n, p}[\rho]$ coincides with $\rho_{\pm \infty}$ on the first and last compatible intervals for $\rho$, \lem{integrability} ensures that when $q \geq p > 1/\alpha$, the functions $\rho - \pi_{n, p}[\rho]$ and $\rho' - \pi_{n, p}[\rho]'$ belong to $L^q(\RR_{\pm \infty})$.

    Let $I = \oo{a}{b}$ be a compatible interval for $\rho$ such that $\rho$ is not linear on $I$. Let $a < \xi_1 < \ldots < \xi_{n'} < b$ the free points in $I$. For convenience we introduce $\xi_0 = a$ and $\xi_{n' + 1} = b$. For $0 \leq k \leq n'$, we write $I_k = \oo{\xi_k}{\xi_{k+1}}$. By construction of $\pi_{n, p}[\rho]$, the restriction of $\pi_{n, p}[\rho]$ to $I_k$ coincides with $T[\rho, \xi_k, \xi_{k+1}]$. We apply \lem{bound} to obtain the upper bounds
    \begin{align}
        \aabs{\rho - \pi_{n, p}[\rho]}_{L^q(I)}^q   & = \sum_{0 \leq k \leq n'} \aabs{\rho - T[\rho, \xi_k, \xi_{k+1}]}_{L^q(I_k)}^q \leq A_q(\rho)^q \sum_{0 \leq k \leq n'} \aabs{\rho''}_{L^{q/(2q+1)}(I_k)}^q, \label{eq:bound_P}   \\
        \aabs{\rho' - \pi_{n, p}[\rho]'}_{L^q(I)}^q & = \sum_{0 \leq k \leq n'} \aabs{\rho' - T[\rho, \xi_k, \xi_{k+1}]'}_{L^q(I_k)}^q \leq B_q(\rho)^q \sum_{0 \leq k \leq n'} \aabs{\rho''}_{L^{q/(q+1)}(I_k)}^q. \label{eq:bound_DP}
    \end{align}
    We introduce the notations $q_1 = q/(q+1)$ and $q_2 = q/(2q+1)$ to make the rest of the proof more readable. We are now looking for a choice of free points that provides a satisfying upper bound for both inequalities. We do so by equidistributing the $L^{q_2}$ norm of $\rho''$ on the subsegments $I_k$. Let $\eta$ be the function defined on $I$ by
    \[\eta(\xi) = \int_{\xi_0}^{\xi} \abs{\rho''(t)}^{q_2} \D{t}.\]
    Since $\rho''$ may only vanish on a set of zero measure, it is clear that $\eta$ is strictly increasing. Besides, we check that $\eta(\xi_0) = 0$ and $\eta(\xi_{n'+1}) = \aabs{\rho''}_{L^{q_2}(I)}^{q_2}$. Therefore, by the intermediate value theorem, we can select $\xi_k$ such that $\eta(\xi_k) = k (n'+1)^{-1} \aabs{\rho''}_{L^{q_2}(I)}^{q_2}$ for all $1 \leq k \leq n'$.

    Now, the terms in the sum of \eq{bound_P} are equal to $\left(\aabs{\rho''}_{L^{q_2}(I_k)}^{q_2}\right)^{q/q_2} = (\eta(\xi_{k+1}) - \eta(\xi_k))^{2q+1}$, and with this choice of $(\xi_k)$ these terms are all constant to $(n'+1)^{-(2q+1)} \aabs{\rho''}_{L^{q_2}(I)}^{q}$. Therefore, \eq{bound_P} becomes
    \begin{align}
        \aabs{\rho - \pi_{n, p}[\rho]}_{L^q(I)}^q \leq A_q(\rho)^q (n'+1)^{-2q} \aabs{\rho''}_{L^{q_2}(\RR)}^{q}, \label{eq:bound_P2}
    \end{align}
    where we used the fact that $\rho''$ is $L^{q_2}$ integrable on $\RR$ as a whole to bound the $L^{q_2}$ norm on $I$.

    The partition of $I$ above does not equidistribute $\aabs{\rho''}_{L^{q_1}(I_k)}^{q_1}$ so we cannot use the same method to bound \eq{bound_DP}. Instead, we break down $\abs{\rho''}^{q_1}$ into $\abs{\rho''}^{q_1-q_2} \abs{\rho''}^{q_2}$ and use the fact that $\rho''$ is bounded to move the first factor out of the integral:
    \[
        \aabs{\rho''}_{L^{q_1}(I_k)}^{q_1} \leq \aabs{\rho''}_{L^\infty(I_k)}^{q_1 - q_2} \aabs{\rho''}_{L^{q_2}(I_k)}^{q_2} \leq (n' + 1)^{-1} \aabs{\rho''}_{L^\infty(\RR)}^{q_1 - q_2} \aabs{\rho''}_{L^{q_2}(\RR)}^{q_2}.
    \]
    We make use of this result to bound \eq{bound_DP}. After simplification, we have
    \begin{align}
        \aabs{\rho' - \pi_{n, p}[\rho]'}_{L^q(I)}^q \leq B_q(\rho)^q (n' + 1)^{-q} \aabs{\rho''}_{L^\infty(\RR)}^{q^2/(2q+1)} \aabs{\rho''}_{L^{q_2}(\RR)}^{q(q+1)/(2q+1)}. \label{eq:bound_DP2}
    \end{align}

    {\itshape (ii) Bound on $\RR$ as a whole for $q < \infty$.}
    Let $\mathcal{I}(\rho)$ denote the set of compatible intervals for $\rho$. We apply the strategy presented in {\itshape (i)} to all $I \in \mathcal{I}(\rho)$ and find an upper bound for the $L^q$ norm on $\RR$ as a whole. Applying \eq{bound_P2}, we finally reach
    \begin{align*}
        \aabs{\rho - \pi_{n, p}[\rho]}_{L^q(\RR)}^q & = \sum_{I \in \mathcal{I}(\rho)} \aabs{\rho - \pi_{n, p}[\rho]}_{L^q(I)}^q \leq \sum_{I \in \mathcal{I}(\rho)} A_q(\rho)^q (n'+1)^{-2q} \aabs{\rho''}_{L^{q_2}(\RR)}^{q} \\
                                                    & \leq \kappa(\rho) A_q(\rho)^q (n'+1)^{-2q} \aabs{\rho''}_{L^{q_2}(\RR)}^{q}                                                                                              \\
                                                    & \leq A_q(\rho)^q \kappa(\rho)^{2q+1} \aabs{\rho''}_{L^{q_2}(\RR)}^{q} n^{-2q}.
    \end{align*}
    The last inequality comes from the fact that for all $m, n > 0$, $m \floor{n/m} \geq n - m$. We conclude by raising both sides to the power $1/q$: the $L^q$ norm of $\rho - \pi_{n, p}[\rho]$ decays as $n^{-2}$. We apply the same method to $\rho' - \pi_{n, p}[\rho]$: using \eq{bound_DP2} yields
    \begin{align*}
        \aabs{\rho' - \pi_{n, p}[\rho]'}_{L^q(\RR)}^q & \leq B_q(\rho)^q \kappa(\rho)^{q+1} \aabs{\rho''}_{L^\infty(\RR)}^{q^2/(2q+1)} \aabs{\rho''}_{L^{q_2}(\RR)}^{q(q+1)/(2q+1)} n^{-q}.
    \end{align*}
    Here again, we conclude by raising both sides to the power $1/q$: the $L^q$ norm of $\rho' - \pi_{n, p}[\rho]'$ decays as $n^{-1}$.

        {\itshape (iii) Extension to $p = \infty$.}
    The same bounds are obtained for the $L^\infty$ norm, by considering the maximum $L^\infty$ norm over the $n'+1$ subsegments and sampling the free points so as to equidistribute $\abs{\rho''}^{1/2}$.
\end{proof}

\subsection{Proof of Lemma \ref{lem:convexity}}
\label{proof:lem:convexity}

\begin{proof}
    We know that the cells in $\tau_{\RR^d}(\bm{\vartheta})$ are all convex since they are defined as the interior of intersections of half-planes. The final mesh $\tau_\Omega(\bm{\vartheta})$ is the intersection of $\tau_{\RR^d}(\bm{\vartheta})$ against $\Omega$. Let $K \in \tau_\Omega(\bm{\vartheta})$. There exists $K' \in \tau_{\RR^d}(\bm{\vartheta})$ such that $K = K' \cap \Omega$. If $K' \subseteq \Omega$, then $K = K'$ is convex. By the contraposition principle, if $K$ is not convex, then $K' \nsubseteq  \Omega$. This shows that if $K$ is a non-convex cell in $\tau_\Omega(\bm{\vartheta})$, then $K$ intersects with both $\Omega$ and $\RR^d \backslash \Omega$, which means that $K$ intersects with the boundary of $\Omega$. In the case where $\Omega$ is convex, the intersection of two convex sets being convex implies that $K = K' \cap \Omega$ is always convex.
\end{proof}

\subsection{Proof of Proposition \ref{prop:nns}}
\label{proof:prop:nns}

\begin{proof}
    Let $\Omega \subset \RR^d$ be a bounded domain. Let $\rho$ and $\sigma$ be two continuous functions almost everywhere differentiable such that $\rho - \sigma$, $\rho'$ and $\sigma'$ are bounded on $\RR$, and $\rho$ and $\rho'$ are Lipschitz continuous. We recall that we define the $L^\infty$ norm of a vector- or matrix-valued function as the maximum $L^\infty$ norm of its coordinates.

    We prove the proposition by induction, starting from the identity and showing that the inequalities still hold after composition by a linear map or an activation function. As will be made clear in {\itshape (iii)}, we also need to show that $\nabla u_\sigma$ is bounded in $\Omega$. In this proof, the new constants after the induction are denoted with a prime symbol.

        {\itshape (i) Induction hypotheses.}
    Suppose that $u_\rho$ and $u_\sigma$ take values in $\RR^n$ and that the following inequalities hold for some constants $C_1$, $C_2$, $C_3$, $C_{\bm{\vartheta}}$ and integer $\alpha$
    \begin{align*}
        \aabs{u_\rho - u_\sigma}_{L^\infty(\Omega, \RR^n)}                          & \leq C_1 \aabs{\rho - \sigma}_{L^\infty(\RR)},                                              \\
        \aabs{\nabla u_\rho - \nabla u_\sigma}_{L^\infty(\Omega, \RR^{d \times n})} & \leq C_2 \aabs{\rho - \sigma}_{L^\infty(\RR)} + C_3 \aabs{\rho' - \sigma'}_{L^\infty(\RR)}, \\
        \aabs{\nabla u_\sigma}_{L^\infty(\Omega, \RR^{d \times n})}                 & \leq C_{\bm{\vartheta}} \aabs{\sigma'}_{L^\infty(\RR)}^\alpha.
    \end{align*}

    {\itshape (ii) Composition with a linear map.}
    Let $\bm{\Theta}: \bm{x} \to \bm{W} \bm{x} + \bm{b}$, where $\bm{W} \in \RR^{m \times n}$ and $\bm{b} \in \RR^{m}$. Let $1 \leq i \leq m$. The triangle inequality yields
    \begin{align*}
        \aabs{(\bm{\Theta} \circ u_\rho)_i - (\bm{\Theta} \circ u_\sigma)_i}_{L^\infty(\Omega)} & \leq \sum_{1 \leq j \leq n} \abs{\bm{W}_{ij}} \aabs{(u_\rho - u_\sigma)_j}_{L^\infty(\Omega)} \leq \aabs{\pmb{\Theta}}_{\infty, 1} \aabs{u_\rho - u_\sigma}_{L^\infty(\Omega, \RR^n)},
    \end{align*}
    where we have introduced the norm $\aabs{\bm{\Theta}}_{\infty, 1} = \max_{1 \leq i \leq m} \sum_{1 \leq j \leq n} \abs{\bm{W}_{ij}}$. Let $1 \leq k \leq d$. We follow the same method for the norm of $\partial_k (\bm{\Theta} \circ u_\rho) - \partial_k (\bm{\Theta} \circ u_\sigma)$: it holds
    \begin{align*}
        \aabs{\partial_k (\bm{\Theta} \circ u_\rho)_i - \partial_k (\bm{\Theta} \circ u_\sigma)_i}_{L^\infty(\Omega)} & \leq \aabs{\bm{\Theta}}_{\infty, 1} \aabs{\nabla u_\rho - \nabla u_\sigma}_{L^\infty(\Omega, \RR^{d \times n})}.
    \end{align*}
    The same argument shows that $\aabs{\partial_k (\bm{\Theta} \circ u_\sigma)_i}_{L^\infty(\Omega)} \leq \aabs{\bm{\Theta}}_{\infty, 1} \aabs{\nabla u_\sigma}_{L^\infty(\Omega, \RR^{d \times n})}$. We conclude by taking the maximum on $1 \leq i \leq m$ and $1 \leq k \leq d$: the three bounds still hold for constants $C_1'$, $C_2'$, $C_3'$ and $C_{\bm{\vartheta}}'$ obtained by multiplying the original constants $C_1$, $C_2$, $C_3$ and $C_{\bm{\vartheta}}$ by $\aabs{\bm{\Theta}}_{\infty, 1}$, and taking $\alpha' = \alpha$.

        {\itshape (iii) Composition with an activation function.}
    We now compose $u_\rho$ by $\rho$ and $u_\sigma$ by $\sigma$. Let $1 \leq i \leq n$. We use the triangle inequality again and obtain
    \begin{align*}
        \aabs{(\rho \circ u_\rho)_i - (\sigma \circ u_\sigma)_i}_{L^\infty(\Omega)} & \leq \aabs{\rho \circ (u_\rho)_i - \rho \circ (u_\sigma)_i}_{L^\infty(\Omega)} + \aabs{\rho \circ (u_\sigma)_i - \sigma \circ (u_\sigma)_i}_{L^\infty(\Omega)}.
    \end{align*}
    We bound the first term using the Lipschitz continuity of $\rho$. The second term is bounded by the $L^\infty$ norm of $\rho - \sigma$. Altogether, applying the induction hypothesis, we reach
    \begin{align*}
        \aabs{(\rho \circ u_\rho)_i - (\sigma \circ u_\sigma)_i}_{L^p(\Omega)} & \leq \underbrace{\left[C_1 \lip{\rho} + 1\right]}_{= C_1'} \aabs{\rho - \sigma}_{L^\infty(\RR)}.
    \end{align*}
    We now turn to the bound for the gradient. By the chain rule, $\partial_k (\rho \circ (u_\rho)_i) = \rho' \circ (u_\rho)_i \partial_k (u_\rho)_i$. We use the triangle inequality to decompose the distance between $\partial_k (\rho \circ u_\rho)_i$ and $\partial_k (\rho \circ u_\rho)_i$ into three terms:
    \begin{align*}
        \aabs{\partial_k (\rho \circ u_\rho)_i - \partial_k (\sigma \circ u_\sigma)_i}_{L^\infty(\Omega)} & \leq \aabs{\rho' \circ (u_\rho)_i \partial_k (u_\rho)_i - \rho' \circ (u_\rho)_i \partial_k (u_\sigma)_i}_{L^\infty(\Omega)}       \\
                                                                                                          & + \aabs{\rho' \circ (u_\rho)_i \partial_k (u_\sigma)_i - \rho' \circ (u_\sigma)_i \partial_k (u_\sigma)_i}_{L^\infty(\Omega)}      \\
                                                                                                          & + \aabs{\rho' \circ (u_\sigma)_i \partial_k (u_\sigma)_i - \sigma' \circ (u_\sigma)_i \partial_k (u_\sigma)_i}_{L^\infty(\Omega)}.
    \end{align*}
    The first term is easily bounded using the fact that $\rho'$ is bounded on $\RR$. The second term is bounded owing to $\rho'$ being Lipschitz continuous and $\nabla u_\sigma$ being bounded. We bound the last term by using the $L^\infty$ norm of $\rho' - \sigma'$ and the fact that $\nabla u_\sigma$ is bounded. We finally obtain the following upper bound
    \begin{align*}
        \aabs{\partial_k (\rho \circ u_\rho)_i - \partial_k (\sigma \circ u_\sigma)_i}_{L^p(\Omega)} & \leq  \underbrace{\left[C_2 \aabs{\rho'}_{L^\infty(\RR)} + C_1 C_{\bm{\vartheta}} \lip{\rho'} \aabs{\sigma'}_{L^\infty(\RR)}^\alpha\right]}_{= C_2'} \aabs{\rho - \sigma}_{L^p(\RR)} \\
                                                                                                     & + \underbrace{\left[C_3 \aabs{\rho'}_{L^\infty(\RR)} + C_{\bm{\vartheta}} \aabs{\sigma'}_{L^\infty(\RR)}^\alpha\right]}_{= C_3'} \aabs{\rho' - \sigma'}_{L^p(\RR)}.
    \end{align*}
    Since $\sigma'$ is bounded on $\RR$ it holds $\aabs{\partial_k (\sigma \circ u_\sigma)_i}_{L^\infty(\Omega)} \leq \aabs{\sigma'}_{L^\infty(\RR)} \aabs{\nabla u_\sigma}_{L^\infty(\Omega, \RR^{d \times n})}$. We conclude by taking the maximum on $1 \leq i \leq m$ and $1 \leq k \leq d$: the three bounds still hold for constants $C_1'$, $C_2'$ and $C_3'$ indicated above, $C_{\bm{\vartheta}}' = C_{\bm{\vartheta}}$ and $\alpha' = \alpha + 1$.
\end{proof}

\section{Algorithms}
\label{app:algorithms}
\subsection{Mesh extraction}
\label{app:algo_mesh}

In this section, we provide more detail on the way to construct an adapted mesh to a neural network based on a \ac{cpwl} approximation of its activation function, as described in \alg{adaptiveMesh}. We recall the notations introduced in \sect{adaptivity}.

Suppose that $\pi[u_k]$ is a \ac{cpwl} map and that its restriction to $R \subset \RR^d$ is $\pi[u_k]_{|R}(\bm{x}) = \bm{W}_R \bm{x} + \bm{b}_R$. For $1 \leq i \leq m$, we write $\bm{w}_i$ and $b_i$ the $i$-th row of $\bm{W}_R \in \RR^{m \times d}$ and $\bm{b}_R \in \RR^m$. Let $\pi[\rho]$ be a \ac{cpwl} function, and $(\xi_j)_{1 \leq j \leq K}$ denote its breakpoints. We also introduce the local coefficients $(\alpha_j, \beta_j)$ of $\pi[\rho]$ such that the restriction of $\pi[\rho]$ to $\cc{\xi_j}{\xi_{j+1}}$ has the expression $x \mapsto \alpha_j x + \beta_j$.

To identify the regions where $\pi[\rho] \circ \pi[u_k]_{|R}$ is linear, we need to consider the equations $\bm{w}_i \cdot \bm{x} + b_i = \xi_j$ for all $1 \leq i \leq m$ and $1 \leq j \leq K$. These hyperplanes intersect in $R$ and define subregions adapted to $\pi[\rho] \circ \pi[u_k]_{|R}$. This is the step that corresponds to $\mathtt{cutRegion}$ in \alg{adaptiveMesh}. We now present specific algorithms to define these subregions in dimensions zero, one, and two.

\subsubsection{Dimensions zero and one}
\label{app:algo_mesh_cut_01}

In dimension zero, a region is reduced to one single point so nothing needs to be done. In the one-dimensional case, $R$ is a segment that we write $\cc{p_-}{p_+}$. We first compute the range of each linear map $h_i: x \mapsto \bm{w}_i \cdot x + b_i$ on $R$ by computing their values at $p_-$ and $p_+$. We then find which breakpoints $\xi_j$ belong to that range via a binary search, and we compute the corresponding antecedents $p_{i, j}$ of $\xi_j$ by $h_i$ with normalised coordinates as follows
\[p_{i, j} = \frac{1}{2} (p_- + p_+) + \frac{t_{i, j}}{2} (p_+ - p_-), \qquad t_{i, j} = \frac{2 \xi_j - (h_i(p_+) + h_i(p_-))}{h_i(p_+) - h_i(p_-)} \in \cc{-1}{+1}.\]
We found this formulation more numerically stable compared to expressing $p_{i, j}$ in terms of $w_i$, $b_i$, and $\xi_j$. We sort the normalised coordinates $t_{i, j}$ and obtain the corresponding sorted abscissae $(p_l)$. For each segment $\cc{p_l}{p_{l+1}}$, we evaluate the linear maps $h_i$ at $\frac{1}{2} (p_l + p_{l+1})$ and retrieve the coefficients $(\alpha_{l, i}, \beta_{l, i})$ of $\pi[\rho]$ at this location to update the local expression $(\bm{W}_R, \bm{b}_R)$ into $(\mathrm{diag}(\bm{\alpha}_l) \bm{W}_R, \mathrm{diag}(\bm{\alpha}_l) \bm{b}_R + \bm{\beta}_l)$.

\subsubsection{Dimension two}
\label{app:algo_mesh_cut_2}

The two-dimensional case is much more involved as the hyperplanes are lines that can intersect with the boundary of $R$ and among one another. We consider four steps.

    {\itshape (i) Intersection of the hyperplanes with $\partial R$.} Since $R$ is a convex polygon, each hyperplane $h_i: \bm{x} \mapsto \bm{w}_i \cdot \bm{x} + b_i$ can intersect with $\partial R$ at two points at most. If the number of intersections is zero or one, then $R$ lies entirely on one side of the hyperplane thus we only need to handle the case when there are two intersection points. One notable corner case occurs when the hyperplane coincides with an edge of the polygon. In this degenerate case, the region is not cut.

The clipping of a line by a convex polygon can be done in logarithmic time, using a method akin to binary search. We refer the reader to \cite{skala1994} for the full algorithm. We write $(P_{i, j}^-, P_{i, j}^+)$ the clipping of $h_i$ by $R$, and $(P_l)$ the collection of these vertices.

    {\itshape (ii) Intersection of the hyperplanes among themselves.} Next, we check the intersection of pairwise combinations of the lines obtained above. It is important to note that for a fixed row $i$, the lines $(P_{i, j}^-, P_{i, j}^+)$ are parallel. Indeed, they are solutions to $\bm{w}_i \cdot \bm{x} + b_i = \xi$ for two different values of $\xi$. This means that we can skip the tests corresponding to lines arising from the same row. Let $i_1 \neq i_2$ be two different row indices, $j_1$ and $j_2$ be two breakpoint indices, and suppose that $(P_{i_1, j_1}^-, P_{i_1, j_1}^+)$ and $(P_{i_2, j_2}^-, P_{i_2, j_2}^+)$ are two non-parallel lines. We compute their intersection $Q$ via the following parametrisation and solve for $s$ and $t$:
\[Q = \overline{P}_{i_1, j_1} + s \Delta P_{i_1, j_1} = \overline{P}_{i_2, j_2} + t \Delta P_{i_2, j_2}, \qquad \begin{bmatrix} \Delta P_{i_1, j_1} & -\Delta P_{i_2, j_2} \end{bmatrix} \begin{bmatrix} s \\ t \end{bmatrix} = \begin{bmatrix} \overline{P}_1 - \overline{P}_2 \end{bmatrix},\]
where $\overline{P}_{i, j} = \frac{1}{2}(P_{i, j}^- + P_{i, j}^+)$ is the middle of the segment and $\Delta P_{i, j} = \frac{1}{2}(P_{i, j}^+ - P_{i, j}^-)$ is half the direction vector. We need to check that $s$ and $t$ are between $-1$ and $+1$.

    {\itshape (iii) Representation of these intersections as an adjacency graph.} We initialise a connectivity graph with the edges of the region $R$. During step {\itshape (i)}, for each line $(P_{i, j}^-, P_{i, j}^+)$ that cuts the polygon, we add the points $P_{i, j}^-$ and $P_{i, j}^+$ to the edges they belong to, and add an arc in the graph, that connects these two points. During step {\itshape (ii)}, we only need to add new vertices to the graph.

    {\itshape (iv) Extraction of the smallest cycles in this graph.} Finding the partition of $R$ where the composition by $\pi[\rho]$ is \ac{cpwl} amounts to finding the fundamental cycle basis of the undirected graph defined above. We refer to \cite{kavitha2009} for a definition of the fundamental cycle basis of a planar graph and for an algorithm to build it.

\begin{algorithm}
    \caption{$\mathtt{adaptive\char`_mesh}(u)$}
    \label{alg:adaptiveMesh}
    \begin{algorithmic}
        \State $\tau \gets [(\Omega, \bm{I}_d, \bm{0}_d)]$
        \For{$\ell \in \mathtt{layers}(u)$}
        \State $\bm{W} \gets \mathtt{weight}(\ell), \quad \bm{b} \gets \mathtt{bias}(\ell), \quad \tau^{\text{new}} \gets \varnothing$
        \For{$(R, \bm{W}_R, \bm{b}_R) \in \tau$}
        \State $\bm{W}_R^{\text{new}} \gets \bm{W} \times \bm{W}_R, \quad \bm{b}_R^{\text{new}} \gets \bm{W} \times\bm{b}_R + \bm{b}$ \Comment{Compose by linear map}
        \For{$(S, \bm{W}_S, \bm{b}_S) \in \mathtt{cut\char`_region}(\pi[\rho], R, \bm{W}_R^{\text{new}}, \bm{b}_R^{\text{new}})$} \Comment{See \app{algo_mesh_cut_01} and \app{algo_mesh_cut_2}.}
        \State $(\bm{\alpha}, \bm{\beta}) \gets \mathtt{coefficients}(\pi[\rho], S)$ \Comment{Retrieve local coefficients of $\pi[\rho]$ in $S$}
        \State $\bm{W}_S^{\text{new}} \gets \mathtt{diag}(\bm{\alpha}) \times \bm{W}_S, \quad \bm{b}_S^{\text{new}} \gets \mathtt{diag}(\bm{\alpha}) \times \bm{b}_S + \bm{\beta}$ \Comment{Compose by $\pi[\rho]$}
        \State $\tau^{\text{new}} \gets \tau^{\text{new}} \cup \{(S, \bm{W}_S^{\text{new}}, \bm{b}_S^{\text{new}})\}$
        \EndFor
        \EndFor
        \State $\tau \gets \tau^{\text{new}}$
        \EndFor
        \State \Return $\tau$
    \end{algorithmic}
\end{algorithm}

\subsubsection{Notes on the algorithm}

In \alg{adaptiveMesh}, $\mathtt{weight}(\ell)$ and $\mathtt{bias}(\ell)$ denote the parameters of the network at layer $\ell$. The instruction $\mathtt{coefficients}$ is just a lookup operation to retrieve the local coefficients of $\pi[\rho]$. Here its outputs are vectors because the local map on a region is vector-valued. Finally if $\bm{\alpha}$ is a vector, $\mathtt{diag}(\bm{\alpha})$ is the diagonal matrix whose diagonal coefficients are $\bm{\alpha}$.

\subsection{Decomposition of convex polygons}
\label{app:algo_polygon}

In the general two-dimensional case, the cells of the mesh adapted to $u$ are arbitrary convex polygons. Numerical quadrature rules are known for triangles and convex quadrangles, so we decide to split the cells into a collection of these two reference shapes. We encode hard rules to decompose convex polygons with up to ten vertices into triangles and convex quadrangles. For example, an octagon is split into one triangle and two quadrangles by considering the three cycles $(1, 2, 3, 4)$, $(4, 5, 6, 7)$, $(7, 8, 1, 4)$. Larger polygons are recursively split in half until they have fewer than ten vertices.

\printbibliography

\end{document}